\newtheorem{theorem}{Theorem}[section]
\newtheorem{lemma}[theorem]{Lemma}
\newtheorem{corollary}[theorem]{Corollary} 
\newtheorem{prop}[theorem]{Proposition} 
\theoremstyle{definition}
\newtheorem{defi}[theorem]{Definition}
\theoremstyle{remark}
\newtheorem{remark}[theorem]{Remark}
\numberwithin{equation}{section}
\newcommand{\I}{\mathbf{1}}
\newcommand{\ind}[1]{1_{#1}} 
\newcommand{\E}{\mathbb{E}} 
\newcommand{\p}{\mathbb{P}} 
\newcommand{\R}{\mathbb{R}} 
\newcommand{\C}{\mathbb{C}}
\newcommand{\Z}{\mathbb{Z}}
\newcommand{\N}{\mathbb{N}}
\newcommand{\spec}{\mathbf{Sp}}
\DeclareMathOperator{\Med}{Med}	
\DeclareMathOperator{\Var}{Var}	
\DeclareMathOperator{\Cov}{Cov}	
\DeclareMathOperator{\tr}{tr} 	
\DeclareMathOperator{\id}{Id} 	
\DeclareMathOperator{\D}{{\rm d_{L}}} 	
\author{Rados{\l}aw Adamczak} %
\address[RA]{Institute of Mathematics, University of Warsaw \& Institute of Mathematics of the Polish Academy of Sciences} %
\email{R.Adamczak@mimuw.edu.pl}
\thanks{Research partially supported by the National Science Centre, Poland, grants
no. 2015/18/E/ST1/00214}
\subjclass[2010]{60B20, 20B05 (primary), 43A30, 46L54  (secondary)}
\keywords{random matrices, random convolution operators, random circulants, non-Abelian groups, Plancherel measure, limiting spectral distribution, asymptotic freeness, linear eigenvalue statistics}
\title[Random non-Abelian $G$-circulants]{Random non-Abelian $G$-circulant matrices. Spectrum of random convolution operators on large finite groups}
\begin{document}
\maketitle

\begin{abstract}
We analyse the limiting behavior of the eigenvalue and singular value distribution for random convolution operators on large (not necessarily Abelian) groups, extending the results by M. Meckes for the Abelian case. We show that for regular sequences of groups the limiting distribution of eigenvalues (resp. singular values) is a mixture of eigenvalue (resp. singular value) distributions of Ginibre matrices with the directing measure being related to the limiting behavior of the Plancherel measure of the sequence of groups. In particular for the sequence of symmetric groups, the limiting distributions are just the circular and quarter circular laws, whereas e.g. for the dihedral groups the limiting distributions have unbounded supports but are different than in the Abelian case.

We also prove that under additional assumptions on the sequence of groups (in particular for symmetric groups of increasing order) families of stochastically independent random projection operators converge in moments to free circular elements.

Finally, in the Gaussian case we provide Central Limit Theorems for linear eigenvalue statistics.
\end{abstract}

\section{Introduction}

Consider a large finite group $G$, a function $X\colon G \to \C$ and the convolution operator $P_X \colon \C^G \to \C^G$, defined with the formula
\begin{displaymath}
(P_X v)(h) = (X\ast v)(h) = \sum_{g \in G} X(h g^{-1})v(g)
\end{displaymath}
for $v \in \C^G$, $h \in G$.

In what follows we will often identify operators on $\C^G$ with their matrices in the standard basis (i.e. the basis given by functions $\delta_g$ assigning one to $g$ and zero to other elements of $G$). In particular, with this identification $P_X = [X(hg^{-1})]_{h,g\in G}$. Following Diaconis \cite{MR1059483} we will call such matrices $G$-circulant matrices (if $G$ is the cyclic group $\Z_N$, we recover the usual notion of circulant matrices, see e.g. \cite{MR543191}).

The study of $P_X$ is a classical topic in harmonic analysis. In the last fifteen years considerable attention in Random Matrix Theory community has been devoted to investigation of asymptotic spectral properties of $P_X$ where $X$ is a random sequence and $G = \Z_N$ ($N\to \infty$) or more generally $G$ is a large Abelian group (see e.g. \cite{MR2682263,MR2795050,MR2797949,MR3069372}). The interest in this topic has been related to the general idea of Bai \cite{MR1711663} of studying random matrices with special type of dependence between entries, in particular matrices with additional linear or combinatorial structure, which limits the amount of stochastic independence with respect to the classical theory of random matrices (see e.g. \cite{MR1945684,MR2167641,MR2206341,MR2399292,MR2682263,MR2795050,MR2797949,MR2827968,MR3069372} for results on related models of patterned random matrices, including circulant, Hankel and Toeplitz matrices). Indeed, the usual Wigner or sample covariance type matrices are built with a number of independent random variables of the same order as the number of entries of the matrix, whereas if $X=(X(g))_{g\in G}$ is a family of independent random variables, then $P_X$ is a $|G|\times |G|$ matrix built with only $|G|$ independent random variables.

The purpose of this article is to extend the analysis of spectral properties of $P_X$ presented in \cite{MR3069372} to the case of non-Abelian groups by relating the spectral distribution of $P_X$ to the Plancherel measure of $G$. Recall that the spectral measure of an $N \times N$ matrix $A$ is the Borel probability measure on $\C$ given by the formula
\begin{displaymath}
  L_A = \frac{1}{N}\sum_{i=1}^N \delta_{\lambda_i},
\end{displaymath}
where $\delta_x$ stands for the Dirac mass at $x$ and $\lambda_1,\ldots,\lambda_N$ are the eigenvalues of $A$, counted with their algebraic multiplicities. If $A$ is a random matrix, then $L_A$ becomes a random probability measure and its asymptotic behavior as $N\to \infty$ for various models of random matrices is one of the main topics of Random Matrix Theory. Usually one considers two types of spectral measures -- $L_A$ and $L_{\sqrt{AA^\ast}}$, which describe the collective (global) behavior of respectively eigenvalues and singular values of $A$. Note that $L_{\sqrt{AA^\ast}}$ is supported on $[0,\infty)$, whereas in general $L_A$ is a measure on $\C$.

If $A = A_N$ is an $N \times N$ random matrix with i.i.d. entries of mean zero and variance one, the classical theorem by Marchenko and Pastur \cite{MR0208649} provides almost sure weak convergence of $L_{\frac{1}{\sqrt{N}} \sqrt{AA^\ast}}$ to the deterministic probability measure with density
\begin{displaymath}
  \rho(x) = \frac{1}{\pi}\sqrt{4 - x^2}\ind{[0,2]}(x).
\end{displaymath}
The asymptotic behavior of $L_{\frac{1}{\sqrt{N}}A}$ is on the other hand given by the uniform measure on the unit disc in the complex plane as shown in full generality by Tao and Vu  \cite{MR2722794} (closing a long line of research initiated by Ginibre and Mehta \cite{MR0173726, MR1083764} and subsequently developed by Girko \cite{MR773436}, Edelman \cite{MR1437734}, Bai \cite{MR1428519},  Pan-Zhou \cite{MR2575411}, G\"otze-Tikhomirov \cite{MR2663633}). We refer to the books \cite{MR2760897,MR2567175,MR1083764, MR2808038,MR2906465} for detailed exposition of this and other aspects of Random Matrix Theory.

In the case of Abelian circulant matrices, it was shown by Meckes \cite{MR2797949} that if $G_N = \Z_N$ and $X^N=(X_i^N)_{i=0}^{N-1}$ is a sequence of independent mean zero, random variables with $\E |X_i^N|^2 = 1$, satisfying a Lindeberg type condition (in particular if they are all copies of a single square integrable random variable), then the spectral measure of
\begin{displaymath}
\frac{1}{\sqrt{N}}P_{X^N} = \frac{1}{\sqrt{N}}[X_{(i-j)\,\textrm{mod}\, N}^N]_{i,j=0}^{N-1}
\end{displaymath}
converges weakly in probability to the standard Gaussian measure in the complex plane. Since in this case $P_X$ is a normal matrix, this implies that the spectral measure of $\frac{1}{\sqrt{N}}\sqrt{P_X P_X^\ast}$ converges to the measure with density $f(x) = 2xe^{-x^2}$ on $[0,\infty)$ (see also \cite{MR2682263} for related results concerning singular values of rectangular patterned matrices). Extending this result to more general Abelian setting, Meckes showed \cite{MR3069372} that if $G_N$ is a sequence of Abelian groups with $|G_N|\to \infty$, then the same limiting behavior holds for $\frac{1}{\sqrt{|G_N|}} \sqrt{P_{X^N} P_{X^N}^\ast}$ and $\frac{1}{\sqrt{|G_N|}} P_{X^N}$ under an analogous Lindeberg condition on $(X^N_g)_{g \in G_N}$ and an additional assumption that $\E (X_g^N)^2 = 0$. We remark that the results by Meckes cover also the case when $\E (X_g^N)^2$ is a nonzero real constant, however we will not be concerned with this case in the article so we postpone the full formulation of the result till the final section, where we discuss possible lines of future research.

An important fact that distinguishes structured Abelian circulant matrices from the case of matrices with i.i.d. entries is their normality, already mentioned above.  For Abelian groups we have $P_X P_X^\ast = P_X^\ast P_X$, whereas large matrices $A$ with i.i.d. entries are with high probability non-normal, which makes the analysis of their eigenvalues (i.e. the measure $L_A$) much more complicated than for singular values (i.e. for the measure $L_{\sqrt{AA^\ast}}$).

For non-Abelian $G$ one again loses the normality of $P_X$, and as a consequence the analysis of eigenvalues and singular values of $P_X$ are two different problems. As we will see, the behavior of spectral measures in the general case turns out to be closely related to that of classical random matrix ensembles. The limiting spectral distribution is however more complicated and therefore before introducing our main results we need to recall te definition of the Plancherel measure of a group $G$. Denote by $\widehat{G}$ the set of (equivalence classes) or irreducible unitary representations of $G$ (we refer to the first chapters of \cite{MR964069, MR0450380} for the basic facts from representation theory, which we are going to use). The Plancherel measure on $\widehat{G}$ is defined as
\begin{displaymath}
  \mu_G(\Lambda) = \frac{(\dim \Lambda)^2}{|G|}.
\end{displaymath}
It follows from basic principles of representation theory that $\mu_G$ is a probability measure. In what follows we are going to need the image $\widetilde{\mu}_G$ of $\mu_G$ under the map $T \colon \widehat{G} \to \Z_+$ := \{1,2,\ldots\}, given by $T(\Lambda) = \dim \Lambda$. Thus $\widetilde{\mu}_G$ is a probability measure on $\Z_+$. It will be however more convenient to view $\widetilde{\mu}$ as a probability measure on the one point compactification $\bar{\Z}_+ = \Z_+\cup \{\infty\}$. Explicitly we have
\begin{align}\label{eq:mu-tilde}
  \widetilde{\mu}_G(\{n\}) = \frac{n^2}{|G|}|\{ \Lambda \in \widehat{G}\colon \dim \Lambda = n\}|.
\end{align}

The last ingredient we will need to formulate our results is the definition of the measures appearing in the analysis of classical random matrix ensembles.

\begin{defi}
  For a positive integer $n$ define the measures $\rho_{n}$, $\theta_n$ in the following way.
  \begin{itemize}
    \item Let $\rho_n$ be the measure on $[0,\infty)$ with density

        \begin{displaymath}
          \frac{d\rho_n(x)}{dx} = 2xe^{-nx^2}\sum_{l=0}^{n-1} (\mathcal{L}_l(nx^2))^2,
        \end{displaymath}
        where
        \begin{displaymath}
     \mathcal{L}_l(x) = \frac{e^x}{l!}\frac{d^l}{dx^l}(e^{-x}x^l) = \sum_{k=0}^l \binom{l}{k}\frac{(-1)^k}{k!}x^k
    \end{displaymath}
    is the $l$-th Laguerre polynomial.
    \item Let $\theta_n$ be the measure on $\C$ with density
        \begin{displaymath}
          \frac{d\theta_n(z)}{dz} = \frac{1}{\pi}e^{-n|z|^2}\sum_{l=0}^{n-1}\frac{n^l|z|^{2l}}{l!}.
        \end{displaymath}
  \end{itemize}
    Define also $\rho_\infty$ as the measure with density $\frac{1}{\pi}\sqrt{4 - x^2}\ind{[0,2]}(x)$ (the quarter circle law) and $\theta_\infty$ as the uniform measure on the unit disc in the complex plane (the circular law).

\end{defi}

\begin{remark}\label{rem:rho-and-theta-explained}
  The measure $\rho_n$ and $\theta_n$ are the mean spectral distributions of respectively singular and eigenvalues of the $n\times n$ complex Ginibre ensemble, i.e. of the random matrix $\Gamma_n = \frac{1}{\sqrt{n}}A_n$, where $A_n$ is an $n\times n$ matrix whose entries are i.i.d. standard complex Gaussian variables (i.e. their real and imaginary parts are independent and have Gaussian distribution with mean zero and variance 1/2), see \cite[Theorem 7.5.1]{MR2808038} and \cite[Theorem 3.3]{MR2908617}. The measures $\rho_\infty$ and $\theta_\infty$ are just the weak limits of $\rho_n$ and $\theta_n$ respectively (as asserted by the famous Marchenko-Pastur  \cite{MR0208649} and circular law theorems \cite{MR2908617}, already mentioned above).
\end{remark}

In order to formulate our results let us recall the notion of weak convergence in probability.
\begin{defi}
Let $\nu_N$ be a sequence of random Borel probability measures on $\C$ and let $\nu$ be a deterministic Borel probability measure on $\C$. We will say that the sequence $\nu_N$ converges to $\nu$ weakly in probability if for all $\varepsilon > 0$,
\begin{displaymath}
\lim_{N\to \infty} \p(d(\nu_N,\nu) > \varepsilon) = 0,
\end{displaymath}
where $d$ is a distance on the set of probability measures metrizing the weak convergence.
\end{defi}

\begin{remark}
It is easy to see that the choice of the distance $d$ in the above definition is irrelevant.
\end{remark}

Our main result concerning the distribution of singular values of convolutions is the following theorem.

\begin{theorem}\label{thm:singular-values}
  Let $G_N$ be as sequence of finite groups with $|G_N|\to \infty$. Assume that the sequence of measures $\widetilde{\mu}_{G_N}$ converges weakly to a probability measure $\mu$ on $\overline{\Z}_+$. Let $\xi$ be a complex random variable such that $\E \xi = \E \xi^2 = 0$, $\E|\xi|^2 = 1$.  For each $N$ let $X^N = (X_g)_{g\in G_N}$  be a family of independent copies of $\xi$. Denote finally by $L_n^s$ the (random) spectral measure of $\frac{1}{\sqrt{|G_N|}} \sqrt{P_{X^N}P_{X^N}^\ast}$. Then the sequence of random measures $L_n^s$ converges weakly in probability to the deterministic measure $L^s_\infty$ on $[0,\infty)$ with density
  \begin{align}\label{eq:singular-values-limit-formula}
    \frac{dL^s_\infty(x)}{dx} = \sum_{n \in \overline{\Z}_+} \mu(n) \frac{d\rho_n(x)}{dx}.
  \end{align}
  In particular if $\widetilde{\mu}_{G_N}$ converges weakly to Dirac's mass at $\infty$ then $L^s_\infty$ is the quarter circle law.
\end{theorem}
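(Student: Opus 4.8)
The plan is to reduce everything to the Fourier coefficients of $X^N$ via the Peter--Weyl decomposition of the regular representation. Writing $P_{X^N} = \sum_{g\in G_N}X_g L_g$, where $L$ is the left regular representation of $G_N$ on $\C^{G_N}$, there is a unitary $U$ independent of $X^N$ with $U P_{X^N}U^\ast = \bigoplus_{\Lambda\in\widehat{G_N}}\widehat{X^N}(\Lambda)\otimes\id_{\dim\Lambda}$, where $\widehat{X^N}(\Lambda) = \sum_{g}X_g\Lambda(g)$. Consequently the $|G_N|$ singular values of $\tfrac{1}{\sqrt{|G_N|}}P_{X^N}$ are, for each $\Lambda\in\widehat{G_N}$, the $\dim\Lambda$ singular values of $\Gamma_\Lambda:=\tfrac{1}{\sqrt{|G_N|}}\widehat{X^N}(\Lambda)$, each appearing with multiplicity $\dim\Lambda$, so that (as an identity of random measures)
\begin{displaymath}
  L^s_N = \sum_{\Lambda\in\widehat{G_N}}\frac{(\dim\Lambda)^2}{|G_N|}\,L^s_\Lambda,
\end{displaymath}
where $L^s_\Lambda$ is the empirical singular value distribution of $\Gamma_\Lambda$ and, by \eqref{eq:mu-tilde}, the total weight carried by representations of a fixed dimension $n$ equals $\widetilde{\mu}_{G_N}(\{n\})$. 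A Schur orthogonality computation shows that the entries of $A_\Lambda:=\sqrt{\dim\Lambda}\,\Gamma_\Lambda$ are centred, pairwise uncorrelated, of variance $1$, with $\E (A_\Lambda)_{ij}^2 = 0$ (here the hypothesis $\E\xi^2 = 0$ enters), and that entries belonging to inequivalent $\Lambda$ are uncorrelated; thus $\Gamma_\Lambda = \tfrac{1}{\sqrt{\dim\Lambda}}A_\Lambda$ is normalised exactly as a $\dim\Lambda\times\dim\Lambda$ complex Ginibre matrix.

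I would then analyse $\E\int f\,dL^s_\Lambda$, for $f$ a fixed bounded Lipschitz function, uniformly over $\Lambda$, splitting $\widehat{G_N}$ according to whether $\dim\Lambda\le K$ or $\dim\Lambda > K$. After a standard truncation of $\xi$ (whose effect on the empirical singular value measures is controlled by the Hoffman--Wielandt inequality) one may assume $\E|\xi|^3 <\infty$. For fixed $\dim\Lambda = n$ a quantitative multivariate central limit theorem applies: the relevant third moment sum is $\le\E|\xi|^3\,(n/|G_N|)^{3/2}\sum_{g}|\Lambda(g)_{ij}|^3\le\E|\xi|^3\sqrt{n/|G_N|}$ by unitarity of $\Lambda(g)$ and Schur orthogonality, so $A_\Lambda$ converges to a complex Ginibre matrix uniformly over $\dim\Lambda = n$, whence $\E\int f\,dL^s_\Lambda\to\int f\,d\rho_n$ by Remark~\ref{rem:rho-and-theta-explained}. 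For $\dim\Lambda\to\infty$ I would use the method of moments: expanding the $2k$-th moment $\frac{1}{\dim\Lambda}\tr\bigl((\Gamma_\Lambda\Gamma_\Lambda^\ast)^k\bigr)$ as a sum over $2k$-tuples of group elements of monomials in the $X_g$'s weighted by characters $\chi_\Lambda$ and taking expectations, the moment hypotheses on $\xi$ force the group elements to pair up, and each of the $k!$ pairings contributes, up to an $O(1/|G_N|)$ error, a term of the form $(\dim\Lambda)^{-1}|G_N|^{-k}\sum_{g_1,\dots,g_k}\chi_\Lambda\bigl(g_1g_{\sigma(1)}^{-1}\cdots g_kg_{\sigma(k)}^{-1}\bigr)$; an iterated application of Schur orthogonality evaluates this sum, showing that the non-crossing pairings make the argument the identity (hence a contribution $1$), while each crossing produces characters at non-identity elements and an extra factor $O(1/\dim\Lambda)$. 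Hence $\E\int x^{2k}\,dL^s_\Lambda\to C_k$, the $2k$-th moment of the quarter circle law, with an error $O_k(1/\dim\Lambda)$ uniform over all $\Lambda$ of large dimension; combined with these uniform moment bounds (which also control the spectral edge) and with $\rho_n\to\rho_\infty$ this gives $\E\int f\,dL^s_\Lambda = \int f\,d\rho_\infty + \eps_K$ with $\sup_{\dim\Lambda > K}|\eps_K|\to 0$ as $K\to\infty$.

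Assembling, for bounded Lipschitz $f$,
\begin{displaymath}
  \E\int f\,dL^s_N = \sum_{n\le K}\widetilde{\mu}_{G_N}(\{n\})\bigl(\textstyle\int f\,d\rho_n + o(1)\bigr) + \sum_{\dim\Lambda > K}\frac{(\dim\Lambda)^2}{|G_N|}\,\E\!\int f\,dL^s_\Lambda ,
\end{displaymath}
and, since $\widetilde{\mu}_{G_N}\to\mu$ weakly on $\overline{\Z}_+$ (so $\widetilde{\mu}_{G_N}(\{n\})\to\mu(\{n\})$ for finite $n$, and $\sum_{\dim\Lambda > K}(\dim\Lambda)^2/|G_N| = 1 - \sum_{n\le K}\widetilde{\mu}_{G_N}(\{n\})\to\mu(\{K+1,K+2,\dots\}\cup\{\infty\})$), letting first $N\to\infty$ and then $K\to\infty$ gives $\E\int f\,dL^s_N\to\sum_{n\in\overline{\Z}_+}\mu(\{n\})\int f\,d\rho_n = \int f\,dL^s_\infty$ by \eqref{eq:singular-values-limit-formula}. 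To upgrade this to weak convergence in probability it suffices to show $\Var\int f\,dL^s_N\to 0$: the within-block fluctuations of large blocks are $O((\dim\Lambda)^{-2})$ by the same moment computations, the blocks are uncorrelated across $\Lambda$ by Schur orthogonality (so the many blocks of a fixed dimension obey a law of large numbers), and weighting by $((\dim\Lambda)^2/|G_N|)^2$ and summing yields $\Var\int f\,dL^s_N = O(1/|G_N|)$. The particular case $\mu = \delta_\infty$ is then immediate from \eqref{eq:singular-values-limit-formula}. The crux of the argument --- and its main departure from the Abelian (one-dimensional) case of Meckes --- is the unbounded-dimension regime: one must show that matrices built from merely $|G_N|$ independent scalars, yet of size up to $\sqrt{|G_N|}$, still obey the Marchenko--Pastur law, and it is precisely the $1/\dim\Lambda$ suppression of crossing pairings, coming from Schur orthogonality, that makes this work; controlling it, together with the variance and the spectral edge, uniformly in $\dim\Lambda$, is the delicate point.
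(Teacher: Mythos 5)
Your reduction to Fourier blocks, the small/large dimension dichotomy, the truncation via Hoffman--Wielandt, and the moment analysis of high-dimensional blocks all match the architecture of the paper's proof, and at the level of \emph{expectations} your argument is sound: the character-sum evaluation of pair partitions (non-crossing pairings telescoping to the identity, crossings suppressed by Schur orthogonality) is a legitimate, slightly more explicit substitute for the paper's route, which instead shows that pairings contribute identically for $\xi$ and for Gaussian entries (Lemma \ref{le:moments-universality}) and then quotes the known Ginibre moment asymptotics.

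The genuine gap is in the upgrade from convergence of $\E\int f\,dL^s_N$ to convergence in probability. You claim that ``the blocks are uncorrelated across $\Lambda$ by Schur orthogonality'' and that within-block fluctuations are $O((\dim\Lambda)^{-2})$ ``by the same moment computations''. Schur orthogonality (Lemma \ref{le:covariances}) only gives that the \emph{entries} of $\widehat{X}(\Lambda)$ and $\widehat{X}(\Delta)$ are uncorrelated; for non-Gaussian $\xi$ the blocks are not independent, and uncorrelatedness of entries says nothing about the covariance of the nonlinear statistics $\int f\,dL^s_\Lambda$ and $\int f\,dL^s_\Delta$, nor about the variance of $\int f\,dL^s_\Lambda$ within a block (whose entries are dependent, so Ginibre-type $O(d^{-2})$ fluctuation bounds cannot simply be imported). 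Moreover, even if you establish variance bounds for the \emph{moments} $\frac{1}{d}\tr(\Gamma_\Lambda\Gamma_\Lambda^\ast)^k$ by a $4k$-tuple computation, converting these into a bound on $\Var\int f\,dL^s_N$ for a general bounded Lipschitz $f$ requires a further argument. The paper fills exactly this hole in two different ways: for blocks of bounded dimension it proves a \emph{joint} CLT for pairs $(\widehat{X}(\Lambda),\widehat{X}(\Delta))$, whose Gaussian limit is a pair of independent Ginibre matrices, and deduces that the covariances of the spectral statistics tend to zero (Lemma \ref{le:finite-dimension-CLT}); for high-dimensional blocks it proves concentration of the truncated traces via Talagrand's inequality and the Meckes--Szarek theorem (Corollary \ref{cor:concentration-for-traces}), so that each large block converges \emph{in probability} to the deterministic $\rho_\infty$ and cross-covariances become irrelevant (one just applies the triangle inequality in $L_2$). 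Your proposal needs one of these mechanisms (or an honest second-moment trace computation across blocks together with a moments-to-test-functions argument as in Lemma \ref{le:convergence-moments}); as written, the variance step does not go through.
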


\begin{remark}
  The formula \eqref{eq:singular-values-limit-formula} shows that $L_\infty^s$ is just a mixture of the measures $\rho_n$ with the directing measure equal to $\mu$. In the Abelian case all the measures $\mu_n$ are concentrated on $\{1\}$ thus $L^s_\infty$ has density $2x e^{-x^2}$ and we recover the behaviour described by Meckes \cite{MR3069372}.
\end{remark}

\begin{remark}
Instead of assuming identical distribution, one can also consider Lindeberg type conditions and obtain similar results. To avoid further complications of statements, we will however stick to the setting described in Theorem \ref{thm:singular-values}.
\end{remark}

\begin{remark}
  Note that due to compactness of $\overline{\Z}_+$, for every sequence of groups $G_N$, the sequence of measures $\widetilde{\mu}_N$ is precompact in the topology of weak convergence. If $|G_N|\to \infty$, the above theorem describes the behaviour of the spectral measure along each subsequence $N_n$ such that $\mu_{N_n}$ converges.

\end{remark}

In fact Theorem \ref{thm:singular-values} easily implies the following

\begin{theorem}\label{thm:singular-values-uniform version}
Let $d$ be any distance metrizing the weak convergence of probability measures on $\R$ and let $\xi$ be a complex random variable such that $\E \xi = \E \xi^2 = 0$, $\E|\xi|^2 = 1$.  Then for every $\varepsilon, \delta >0$ there exists $N> 0$ such that for any group $G$ of cardinality at least $N$ the following property holds. If $X = (X_g)_{g\in G}$  is a family of independent copies of $\xi$, $L_{G}^s$ is the (random) spectral measure of $\frac{1}{\sqrt{|G|}} \sqrt{P_{X}P_{X}^\ast}$ and $\mathcal{L}_G$ is the measure defined by formula
  \begin{align}
    \frac{d\mathcal{L}_G(x)}{dx} = \sum_{n \in \overline{\Z}_+} \widetilde{\mu}_G (n) \frac{d\rho_n(x)}{dx},
  \end{align}
  then
  \begin{displaymath}
  \p(d(L_G^s,\mathcal{L}_G) > \varepsilon) < \delta.
\end{displaymath}
\end{theorem}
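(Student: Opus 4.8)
The plan is to deduce this uniform statement from Theorem \ref{thm:singular-values} by a routine compactness argument, arguing by contradiction. Suppose the assertion fails for some $\varepsilon, \delta > 0$. Then for every positive integer $k$ there is a finite group $G_k$ with $|G_k| \ge k$ such that $\p\big(d(L_{G_k}^s, \mathcal{L}_{G_k}) > \varepsilon\big) \ge \delta$; in particular $|G_k| \to \infty$. Since $\overline{\Z}_+$ is a compact metric space, the set of Borel probability measures on it is compact and metrizable for the weak topology, so after passing to a subsequence (which we do not relabel) we may assume that $\widetilde{\mu}_{G_k}$ converges weakly to some probability measure $\mu$ on $\overline{\Z}_+$.

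First I would check that the deterministic measures $\mathcal{L}_{G_k}$ converge weakly to the measure $L^s_\infty$ with density $\sum_{n \in \overline{\Z}_+}\mu(n)\, d\rho_n(x)/dx$ appearing in Theorem \ref{thm:singular-values}. For this, fix a bounded continuous $f \colon \R \to \R$ and consider $\Phi \colon \overline{\Z}_+ \to \R$, $\Phi(n) = \int f\, d\rho_n$. Then $|\Phi| \le \|f\|_\infty$, $\Phi$ is trivially continuous at every finite point, and it is continuous at $\infty$ because $\rho_n \to \rho_\infty$ weakly (Remark \ref{rem:rho-and-theta-explained}); thus $\Phi$ is a bounded continuous function on the compact space $\overline{\Z}_+$. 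Writing $\int f\, d\mathcal{L}_{G_k} = \int_{\overline{\Z}_+}\Phi\, d\widetilde{\mu}_{G_k}$ and using weak convergence $\widetilde{\mu}_{G_k} \to \mu$ gives $\int f\, d\mathcal{L}_{G_k} \to \int_{\overline{\Z}_+}\Phi\, d\mu = \int f\, dL^s_\infty$, i.e. $d(\mathcal{L}_{G_k}, L^s_\infty) \to 0$.

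Next, applying Theorem \ref{thm:singular-values} to the sequence $(G_k)$ (legitimate since $|G_k| \to \infty$ and $\widetilde{\mu}_{G_k} \to \mu$) shows that $L_{G_k}^s$ converges weakly in probability to the same $L^s_\infty$, i.e. $\p\big(d(L_{G_k}^s, L^s_\infty) > \eta\big) \to 0$ for each $\eta > 0$. Combining the two facts through the triangle inequality $d(L_{G_k}^s, \mathcal{L}_{G_k}) \le d(L_{G_k}^s, L^s_\infty) + d(\mathcal{L}_{G_k}, L^s_\infty)$, the right-hand side tends to $0$ in probability, so $\p\big(d(L_{G_k}^s, \mathcal{L}_{G_k}) > \varepsilon\big) \to 0$, contradicting the bound $\ge \delta$. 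This contradiction proves the theorem.

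The argument involves no real obstacle once Theorem \ref{thm:singular-values} is available: the only point that needs an actual (very short) argument is the weak continuity of the mixing map $\nu \mapsto \sum_{n \in \overline{\Z}_+} \nu(n)\rho_n$ from probability measures on $\overline{\Z}_+$ to probability measures on $[0,\infty)$, and that in turn reduces to the already recorded weak convergence $\rho_n \to \rho_\infty$. Everything else is the standard ``if a uniform estimate fails, extract a bad subsequence and contradict the corresponding limit theorem'' scheme.
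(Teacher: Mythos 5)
Your proposal is correct and follows essentially the same route as the paper: a proof by contradiction, extracting a subsequence along which $\widetilde{\mu}_{G_k}$ converges (by compactness of $\overline{\Z}_+$), applying Theorem \ref{thm:singular-values} to that subsequence, and contradicting via the weak convergence $\mathcal{L}_{G_k}\to L^s_\infty$, which rests on $\rho_n\to\rho_\infty$. Your write-up merely makes explicit (via the function $\Phi(n)=\int f\,d\rho_n$ and the triangle inequality) two steps the paper leaves as brief remarks, so there is nothing substantive to add.
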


\medskip

Let us now pass to eigenvalues. As already mentioned above, the problem of the limiting distribution of eigenvalues is usually more subtle than that of singular values. For this reason, in our main result we need to restrict to Gaussian variables.

\begin{theorem}\label{thm:eigenvalues}
  Assume that $\xi$ is a standard complex Gaussian variable and consider the sequence of groups $G_N$ and associated convolution operators $P_{X^N}$ as in Theorem \ref{thm:singular-values}. Then the empirical spectral measure $L^e_N$ of the matrix $\frac{1}{\sqrt{|G_N|}}P_{X^N}$ converges weakly in probability to the deterministic probability measure $L^e_\infty$ on $\C$, with density
  \begin{align}\label{eq:eigenvalues-limit-formula}
    \frac{dL^e_\infty(z)}{dz} = \sum_{n \in \overline{\Z}_+} \mu(n) \frac{d\theta_n(z)}{dz}.
  \end{align}

  In particular if $\widetilde{\mu}_{G_N}$ converges weakly to Dirac's mass at $\infty$ then $L_e^\infty$ is the uniform distribution on the unit disc in the complex plane.
\end{theorem}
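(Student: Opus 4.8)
The natural approach is to reduce the eigenvalue problem to the singular value problem via the Hermitization (or Girko) method, exploiting the block-diagonal structure of $P_{X^N}$ coming from representation theory. By the Peter–Weyl theorem, the group algebra $\C^{G_N}$ decomposes under the regular representation as $\bigoplus_{\Lambda \in \widehat{G_N}} M_{\dim\Lambda}(\C)^{\oplus \dim \Lambda}$, and in this basis $P_{X^N}$ becomes block-diagonal, with each irreducible component $\Lambda$ contributing the block $\widehat{X}(\Lambda) = \sum_{g} X_g \Lambda(g)$ (a $\dim\Lambda \times \dim\Lambda$ matrix) repeated $\dim\Lambda$ times. When $\xi$ is standard complex Gaussian, the unitarity of $\Lambda$ and the assumptions $\E\xi = \E\xi^2 = 0$, $\E|\xi|^2=1$ force $\widehat{X}(\Lambda)$ to be, after normalization by $\sqrt{|G_N|}$, exactly a $\dim\Lambda \times \dim\Lambda$ complex Ginibre matrix $\Gamma_{\dim\Lambda}$ (this is the key algebraic computation: the entries are jointly Gaussian, and one checks that distinct entries are uncorrelated with the right variance, using Schur orthogonality of matrix coefficients). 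Crucially, the blocks associated to distinct $\Lambda$ are \emph{independent}, since they are linear images of the single Gaussian vector $(X_g)_g$ with orthogonal coordinate spans.

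Given this, the empirical eigenvalue measure is the exact identity
\begin{displaymath}
  L^e_N = \sum_{\Lambda \in \widehat{G_N}} \frac{(\dim\Lambda)^2}{|G_N|}\, L_{\Gamma_{\dim\Lambda}} = \sum_{n\in\Z_+} \widetilde{\mu}_{G_N}(\{n\})\, \Bigl(\text{average of } L_{\Gamma_n} \text{ over the } \widetilde{\mu}_{G_N}(\{n\})|G_N|/n^2 \text{ blocks of size } n\Bigr),
\end{displaymath}
so that $\E L^e_N = \sum_n \widetilde{\mu}_{G_N}(\{n\})\theta_n$ exactly (by Remark \ref{rem:rho-and-theta-explained}). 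The plan is then: (i) show $\E L^e_N \to L^e_\infty$ weakly, which follows from $\widetilde{\mu}_{G_N}\to\mu$ on the compact space $\bar\Z_+$ together with continuity of $n\mapsto\theta_n$ (including $\theta_n\to\theta_\infty$) in the weak topology — one must check tightness is not an issue, i.e. that no mass escapes to infinity, which follows because $\theta_n$ has exponentially light tails uniformly and the weak limit of $\theta_n$ is the compactly supported circular law; (ii) show concentration of $L^e_N$ around its mean. For (ii), test against a bounded Lipschitz function $f$: $\int f\,dL^e_N = \sum_\Lambda \frac{(\dim\Lambda)^2}{|G_N|}\cdot\frac{1}{\dim\Lambda}\tr f(\Gamma_{\dim\Lambda})$ is a weighted average of independent contributions; for the fixed-dimension part the number of independent blocks tends to infinity (whenever $\widetilde\mu_{G_N}(\{n\})|G_N|\to\infty$) so a law of large numbers applies, while the contribution of blocks of large dimension is handled by the concentration of linear eigenvalue statistics for Ginibre matrices (the map $A\mapsto \frac1n\tr f(A/\sqrt n)$ is not Lipschitz in $A$ in a way that survives for eigenvalues, so here one instead invokes the Girko Hermitization: bound $d(L^e_N,\E L^e_N)$ using the concentration already established for singular values of shifted matrices $P_{X^N} - z\,\mathrm{Id}$, which are again block-diagonal with Ginibre-minus-scalar blocks, and appeal to Theorem \ref{thm:singular-values} applied to these shifts).

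The cleanest route, and the one I would follow, is in fact to run Girko's Hermitization uniformly: for each $z\in\C$ the matrix $\frac{1}{\sqrt{|G_N|}}P_{X^N} - z\,\mathrm{Id}$ is block-diagonal with blocks $\Gamma_{\dim\Lambda} - z\,\mathrm{Id}_{\dim\Lambda}$, whose symmetrized singular value distributions we control block-by-block (each block's singular value distribution is explicit/concentrated, and the block-diagonal structure makes the global symmetrized spectral measure an exact $\widetilde\mu_{G_N}$-mixture with fluctuations controlled as in Theorem \ref{thm:singular-values}); then the standard replacement-principle machinery (uniform integrability of $\log$ against the symmetrized measures near $0$ — the one delicate point, requiring a lower bound on the smallest singular value of $\Gamma_n - z\,\mathrm{Id}$, available from Ginibre least-singular-value estimates uniformly in $n$ and locally uniformly in $z$) upgrades convergence of symmetrized singular value distributions to weak-in-probability convergence of $L^e_N$. \textbf{The main obstacle} is precisely this last point: establishing the requisite uniform (in the block dimension $n$, including $n$ large, and locally uniform in $z$) control on small singular values of $\Gamma_n - z\,\mathrm{Id}_n$ so that $\log$ is uniformly integrable — for fixed $n$ this is classical, but one needs an estimate that is uniform across the whole sequence of block sizes that can occur, and then one must sum the contributions against the directing measure $\mu$. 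The identification of the limit itself is, by contrast, essentially free once the block structure and Remark \ref{rem:rho-and-theta-explained} are in hand.
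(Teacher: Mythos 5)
Your reduction to the block structure is exactly right and matches the paper: in the Gaussian case $\frac{1}{\sqrt{|G_N|}}P_{X^N}$ has, in distribution, spectral measure $\sum_{\Lambda}\mu_{G_N}(\Lambda)L_{\Gamma_\Lambda}$ with \emph{independent} Ginibre blocks (this is Proposition \ref{prop:Gaussian-distribution}, proved via Lemma \ref{le:covariances}), and the identification $\E L^e_N=\sum_n\widetilde{\mu}_{G_N}(n)\theta_n$ is the same. The gap is in how you pass from this to convergence in probability. Your ``cleanest route'' is Girko Hermitization of the full matrix with a replacement principle, and you yourself flag its decisive step --- uniform (in the block dimension, including arbitrarily large blocks, and locally uniform in $z$) integrability of $\log$ near $0$, i.e. least-singular-value control for $\Gamma_n-z\,\id$ summed against the directing measure --- as an unresolved obstacle. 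As written the argument is therefore incomplete exactly where it is hardest; moreover your first variant (``appeal to Theorem \ref{thm:singular-values} applied to these shifts'') is not literally available, since that theorem concerns $P_{X^N}$ itself and not $P_{X^N}-z\sqrt{|G_N|}\,\id$, so it would have to be re-proved for shifted matrices.

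What you miss is that in the Gaussian case no Hermitization is needed at all, because each block is \emph{exactly} an independent Ginibre matrix, so the circular law for the Ginibre ensemble ($L_{\Gamma_n}\to\theta_\infty$ weakly in probability) can simply be invoked as a known input, block by block. This is what the paper does: fix a bounded continuous $f$ and $\varepsilon>0$, choose $n_0$ so that for $n>n_0$ both $\|\int f\,dL_{\Gamma_n}-\int f\,d\theta_\infty\|_2<\varepsilon$ (convergence in probability plus boundedness of $f$ upgrades to $L_2$) and $|\int f\,d\theta_n-\int f\,d\theta_\infty|<\varepsilon$, and then split the $L_2$-error into five terms. The blocks of dimension at most $n_0$ are handled by a variance bound using independence, $\sum_{\dim\Lambda\le n_0}\mu_{G_N}(\Lambda)^2\Var\bigl(\int f\,dL_{\Gamma_\Lambda}\bigr)\le\|f\|_\infty^2\,n_0^2/|G_N|\to 0$; note this requires no condition such as $\widetilde{\mu}_{G_N}(\{n\})|G_N|\to\infty$, only that each individual weight $\mu_{G_N}(\Lambda)=(\dim\Lambda)^2/|G_N|$ is small. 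The blocks of dimension larger than $n_0$ contribute at most $\varepsilon$ by the triangle inequality in $L_2$, and the remaining terms vanish because $\widetilde{\mu}_{G_N}\to\mu$ on $\overline{\Z}_+$ and $\theta_n\to\theta_\infty$. So your structural analysis is correct, but the route you commit to leaves its main step open and is avoidable: the concentration you are after is already contained in the known Ginibre circular law together with independence of the blocks and the smallness of each block's weight.
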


It turns out that under additional assumption that the limiting measure $\mu$ is the Dirac mass at infinity, one can easily analyze polynomials in several independent random convolution operators. In particular, if we assume that the underlying random variable $\xi$ has all moments, such independent random matrices are asymptotically free and converge in moments to circular elements. Since the formulation of the corresponding result is rather technical, we postpone it to Section \ref{sec:freeness} (see Proposition \ref{prop:freeness} and Theorem \ref{thm:freeness-nongaussian}).

\medskip
Another aspect one can investigate is the Central Limit Theorem for linear eigenvalue statistics. We provide such a result in Theorem \ref{thm:CLT-Gaussian}.

\medskip

The organization of the article is as follows. First, in Section \ref{sec:examples} we present a few examples to illustrate our main theorems with concrete well-studied sequences of groups. Next, in Section \ref{sec:Gaussian} we discuss the Gaussian case, in particular we prove Theorem \ref{thm:eigenvalues}. Section \ref{sec:universality} is devoted to the proof of Theorem \ref{thm:singular-values}. In Section \ref{sec:freeness} we discuss asymptotic freeness of collections of independent random projection operators while Section \ref{sec:CLT} is devoted to Central Limit Theorems for the Gaussian case.  We conclude with a list of problems for future research (Section \ref{sec:questions}). Auxiliary lemmas together with their proofs are postponed to the Appendix.

\medskip

\paragraph{\bf Acknowledgement} The Author would like to thank Piotr {\'S}niady for his patience with answering numerous questions concerning representation theory of finite groups.

\section{Examples}\label{sec:examples}

In this section we will illustrate our main results with examples related to classical sequences of groups.
\medskip

\begin{enumerate}[(I)]
\item \label{en:Abelian}If the groups $G_N$ are Abelian, then all irreducible representations are of dimension one. As a  consequence we get $L^e_\infty = \theta_1$ -- the standard Gaussian measure on $\C$ and $L^s_\infty = \rho_1$ -- the measure with density $2xe^{-x^2}$ on $[0,\infty)$. We thus recover the result by Meckes concerning Abelian circulants \cite{MR3069372}.
    \medskip
\item Let $G_N$ be the $N$-th dihedral group, i.e. the isometry group of the regular $N$-sided polygon. It is a group of order $2N$ and, as is well known for $N$ odd it has $2$ one-dimensional representations and $(N-1)/2$ representation of dimension two, while for $N$ even it has $4$ one-dimensional and $(N-2)/2$ two-dimensional representations. Thus $\widetilde{\mu}_{G_N}$ converges to the Dirac's mass at two and we have $L^e_\infty = \theta_2$, $L^s_\infty = \rho_2$.
    \medskip
\item If $G_N = S_N$ -- the symmetric group of permutations of an $N$-element set, then $\widetilde{\mu}_{G_N}$ converges to Dirac's mass at $\infty$ (see \cite{MR783703} for much more precise results concerning the asymptotics of this measure). Thus the limiting eigenvalue and singular value distributions are in this case the circular and quarter-circular laws $\theta_\infty, \rho_\infty$.
    \medskip
\item Let $F$ be a finite field of cardinality $q$ and $G_q = GL_2(F)$ be the linear group of $2 \times 2$ invertible matrices over $F$. Then $G_q$ has $q-1$ representations of dimension one, $q(q-1)/2$ representation of dimension $q-1$, , $q-1$ representations of dimension $q$, $(q-1)(q-2)/2$ representations of dimension $q+1$. Thus for any admissible sequence $q_N \to \infty$, the measure $\widetilde{\mu}_{G_{q_N}}$ converges to Dirac's mass at infinity and so the limiting eigenvalue and singular value distributions for random convolution operators on $G_{q_N}$ are again the circular and quarter-circular distributions.
    \medskip
\item Fix a finite group $G$ and a sequence of Abelian groups $H_N$ with cardinalities tending to infinity and let $G_N = G \times H_N$. Then using the fact that irreducible representations of direct products are given by tensor products of irreducible representations of the factors, we obtain $\widetilde{\mu}_{G_N} = \widetilde{\mu}_G$. Thus in this case $\mu = \widetilde{\mu}_G$ and the limiting eigenvalue and singular value distributions are mixtures of the measures $\theta_n$ and $\rho_n$ respectively, driven by the measure $\widetilde{\mu}_G$.
    To give a concrete example, let us consider $G_N = S_3\times \Z_N = D_3\times Z_N$. Then $L^e_\infty = \frac{1}{3}\theta_1 + \frac{2}{3}\theta_2$, $L^s_\infty = \frac{1}{3}\rho_1 + \frac{2}{3}\rho_2$.

\medskip
\item Fix a finite group $G$ and let $G_N = G^{\times N}$ be the $N$-fold direct product of $G$. Then, for $G$ -- Abelian, by item \eqref{en:Abelian}, we clearly have $L^e_\infty = \theta_1$, $L^s_\infty = \rho_1$, whereas for $G$ -- non-Abelian we have $L^e_\infty = \theta_\infty$ -- the circular law and $L^s_\infty = \rho_\infty$ -- the quarter-circular law. Thus the asymptotic behavior of random convolution operators on $G^{\times N}$ `encodes' the commutativity of $G$.
\medskip
\end{enumerate}

A natural question arising in view of our results and the above examples is what measures can be obtained as limits of eigenvalue or singular value distributions of random convolution operators, i.e. which measures $\mu$ on $\overline{\Z}_+$ are weak limits of projected
Plancherel measures $\widetilde{\mu}_{G_N}$ for some sequence of groups $G_N$. By using again the form of irreducible representations of direct products one can easily see that the set of all limits is closed under multiplicative convolutions, i.e. if $\mu$ and $\nu$ are such limits then so is the measure $\lambda$ given by the formula $\lambda({n}) = \sum_{k|n} \mu(\{k\})\nu(\{n/k\})$. Moreover, as noted above it contains all projected Plancherel measures of finite groups and the Dirac mass at two. The characterization of all possible weak limits is beyond the scope of mostly probabilistic techniques which lie behind our main results and seems to require a more detailed analysis on the grounds of representation theory.

\section{The Gaussian case}\label{sec:Gaussian}

In this section we will consider the case of a convolution operator $P_X$ on a finite group $G$, where $X = (X_g)_{g\in G}$ is a family of independent standard complex Gaussian variables, i.e. for each $g \in G$, the random variables $\Re X_g$ and $\Im X_g$ are independent real Gaussian variables with mean zero and variance $1/2$.

Let us start by recalling some basic facts from noncommutative harmonic analysis on finite groups, which will allow us to give an explicit description of the spectrum of random convolution operators in the Gaussian case.

Denote by $L_2(G)$ the space of complex valued sequences on $G$, with the inner product $\langle x, y \rangle = \frac{1}{|G|}\sum_{g \in G} x_g \bar{y}_g$ and let $\|\cdot\|$ be the corresponding norm.
Recall from representation theory that there is a natural isometry between the space $L_2(G)$ and the direct sum $\oplus_{\Lambda \in \widehat{G}} M(\dim \Lambda)$, where $M(n)$ is the space of complex $n \times n$  matrices with the inner product given by $\langle A, B\rangle = \frac{1}{n}\tr A B^\ast$. More precisely, the functions $\{ g \mapsto \Lambda_{ij}(g) \colon \Lambda \in \widehat{G}, i,j \in \{1,\ldots,\dim \Lambda\}\}$ form an orthogonal basis of $L_2(G)$ with $\|\Lambda_{ij}\|_2 = \frac{1}{\sqrt{\dim \Lambda}}$ (see e.g. \cite[Chapter 2C, Corollary 3]{MR964069}).

Recall also the definition of the Fourier transform of a sequence $x \in L_2(G)$, which we will denote by $\widehat{x}$ and which is a function defined on $\widehat{G}$ with the formula
\begin{displaymath}
\widehat{x}(\Lambda) = \sum_{g \in G} x_g \Lambda(g).
\end{displaymath}

Thus $\widehat{x}(\Lambda)$ is a square matrix of size $\dim \Lambda$. The Plancherel formula asserts that for all functions $x, y\colon G\to \C$,
\begin{align}\label{eq:Plancherel}
\sum_{g \in G} y_{g^{-1}}x_g = \frac{1}{|G|}\sum_{\Lambda \in \widehat{G}} \dim(\Lambda) \tr \widehat{y}(\Lambda)\widehat{x}(\Lambda).
\end{align}

In particular we have the Fourier inversion formula
\begin{displaymath}
x_g = \frac{1}{|G|} \sum_{\Lambda \in \widehat{G}} (\dim \Lambda)\tr (\Lambda(g^{-1}) \widehat{x}(\Lambda))
\end{displaymath}
and so the map $x \mapsto \widehat{x}$ can be regarded as a linear isomorphism of $L_2(G)$ and $\oplus_{\Lambda \in \widehat{G}} M(\dim \Lambda)$.

We will make use of the fundamental relation between the Fourier transform and convolutions, namely for any two sequences $x,y \in L_2(G)$ and any $\Lambda \in \widehat{G}$,
\begin{displaymath}
\widehat{x\ast y}(\Lambda) = \widehat{x}(\Lambda) \widehat{y}(\Lambda),
\end{displaymath}
where on the right hand side we have multiplication of matrices. It follows that if for an $n\times n$ matrix $Q$, the linear map $M_Q \colon M(d) \to M(d)$ is given by $M_Q(A) = QA$, then the spectrum of $P_x$ (treated as a multiset) is the same as the spectrum of the linear operator
$\mathcal{M}_x \colon \oplus_{\Lambda \in \widehat{G}} M(\dim \Lambda) \to \oplus_{\Lambda \in \widehat{G}} M(\dim \Lambda)$ defined as $\mathcal{M}_x = \oplus_{\Lambda \in \widehat{G}} M_{\widehat{x}(\Lambda)}$, i.e.
$\mathcal{M}_x (\oplus_{\Lambda \in  \widehat{G}} A_\Lambda) = \oplus_{\Lambda \in \widehat{G}} (\widehat{x}(\Lambda) A_\Lambda)$.

\medskip

One has the following simple proposition.
\begin{prop}
Let $Q$ be an $n\times n$ matrix and let $\rho$ be its eigenvalue of algebraic multiplicity $m$. Then $\rho$ is an eigenvalue of $M_Q$ of algebraic multiplicity $nm$.
\end{prop}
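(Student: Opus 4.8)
The plan is to recognize $M_Q$ as, after a suitable choice of basis of $M(n)$, the block-diagonal operator with $n$ identical diagonal blocks, each equal to $Q$. To this end, for $j \in \{1,\dots,n\}$ let $V_j \subseteq M(n)$ be the subspace consisting of those matrices whose columns other than the $j$-th vanish. Since left multiplication by $Q$ acts separately on each column, every $V_j$ is $M_Q$-invariant, and $M(n) = \bigoplus_{j=1}^n V_j$. Moreover, the linear isomorphism $V_j \to \C^n$ sending a matrix to its $j$-th column conjugates $M_Q|_{V_j}$ into $Q$. Hence, in the basis of $M(n)$ obtained by concatenating the column-identified standard bases of $V_1,\dots,V_n$, the matrix of $M_Q$ is $\operatorname{diag}(Q,\dots,Q)$ with $n$ blocks.

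Consequently the characteristic polynomial of $M_Q$ equals $\chi_Q(\lambda)^n$, where $\chi_Q$ denotes the characteristic polynomial of $Q$. If $(\lambda-\rho)^m$ is the exact power of $(\lambda-\rho)$ dividing $\chi_Q$ --- that is, $\rho$ is an eigenvalue of $Q$ of algebraic multiplicity $m$ --- then $(\lambda-\rho)^{nm}$ is the exact power dividing $\chi_Q^n$, so $\rho$ is an eigenvalue of $M_Q$ of algebraic multiplicity $nm$, as claimed.

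The same conclusion can be reached by noting that, under the identification $M(n) \cong \C^n \otimes \C^n$ in which a matrix corresponds to the tuple of its columns, one has $M_Q = Q \otimes \id$, and the multiplicities of eigenvalues of a Kronecker product multiply in the standard way. I do not anticipate a genuine obstacle; the only point deserving a word of care is that ``algebraic multiplicity'' should throughout be read as the order of vanishing of the characteristic polynomial at $\rho$, with the degenerate case $m=0$ (i.e. $\rho \notin \spec(Q)$) then being automatically consistent. This proposition will subsequently be applied summand-by-summand to the decomposition $\mathcal{M}_x = \bigoplus_{\Lambda \in \widehat{G}} M_{\widehat{x}(\Lambda)}$ to read off the multiset of eigenvalues of $P_x$.
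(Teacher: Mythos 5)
Your proof is correct, and it takes a more direct route than the paper's main argument. The paper first treats the case where all eigenvalues of $Q$ are simple, exhibiting for each eigenvector $v$ the matrices with a single column equal to $v$ as eigenvectors of $M_Q$, and then passes to the general case by density of matrices with simple spectrum together with continuous dependence of the spectrum (with multiplicities) on the matrix; it only mentions the identification $M_Q = Q \otimes \id_n$ as an alternative in passing. You instead decompose $M(n)$ into the $M_Q$-invariant column subspaces $V_1,\ldots,V_n$, observe that $M_Q|_{V_j}$ is conjugate to $Q$, and read off that the characteristic polynomial of $M_Q$ is $\chi_Q^n$ -- which gives the multiplicity statement for all eigenvalues at once, with no perturbation or continuity argument and without invoking the standard fact about spectra of Kronecker products. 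In effect you have made the paper's parenthetical tensor-product remark self-contained and elementary; the only thing the paper's eigenvector construction buys in the simple-eigenvalue case is an explicit description of the eigenspaces of $M_Q$, which is not needed for the proposition and is not used later. Your closing caveat about reading algebraic multiplicity as the order of vanishing of the characteristic polynomial is the right convention and handles the degenerate case cleanly.
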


\begin{proof}
If $\rho$ has multiplicity one and $v$ is an eigenvector corresponding to $\rho$, then the matrix $A_i$ with the $i$-th column equal $v$ and the remaining columns equal zero is an eigenvector of $M_Q$ with eigenvalue $\rho$. In the special case, when all eigenvalues of $Q$ have multiplicity one, this ends the proof, since the $n$ just described eigenspaces of dimension $n$ each span the whole space $M(n)$. The general case follows by continuous dependence of the spectrum (treated as multiset with algebraic multiplicities) on the matrix  (see e.g. \cite[Appendix D]{MR832183}) and the fact that matrices with all eigenvalues of multiplicity one are dense in $M(n)$.
Alternately one can identify $M_Q$ with $Q\otimes \id_n$, where $\id_n$ is the $n\times n$ identity matrix and use the relation between the spectrum of tensor products of matrices and spectra of their factors.
\end{proof}

Applying this result to $P_x$ we obtain the following
\begin{corollary}\label{cor:deterministic-spectrum-explained}
The spectral measure of $P_x$ (resp. $\sqrt{P_xP_x^\ast}$) is the mixture of the spectral measures of $\widehat{x}(\Lambda)$, $\Lambda \in \widehat{G}$ (resp. $\sqrt{\widehat{x}(\Lambda)\widehat{x}^\ast(\Lambda)}$, $\Lambda \in \widehat{G}$) with the driving measure equal to the Plancherel measure on $\widehat{G}$.
\end{corollary}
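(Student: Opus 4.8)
The plan is to push everything through the block decomposition $\mathcal{M}_x = \oplus_{\Lambda \in \widehat{G}} M_{\widehat{x}(\Lambda)}$ already recorded above, to invoke the above Proposition on each summand, and then to recognise the resulting count as a mixture directed by the Plancherel measure $\mu_G$ (recall $\mu_G(\Lambda) = (\dim\Lambda)^2/|G|$). First I would treat eigenvalues. Since the Fourier transform $x \mapsto \widehat{x}$ is a linear isomorphism of $L_2(G)$ onto $\oplus_\Lambda M(\dim\Lambda)$ intertwining $P_x$ with $\mathcal{M}_x$, these operators are similar, hence have identical multisets of eigenvalues. The Proposition applied with $Q = \widehat{x}(\Lambda)$ says that each eigenvalue $\rho$ of $\widehat{x}(\Lambda)$ of algebraic multiplicity $m$ contributes $(\dim\Lambda)m$ copies of $\rho$ to the spectrum of $M_{\widehat{x}(\Lambda)}$, so the contribution of the $\Lambda$-block to the sum of Dirac masses equals $(\dim\Lambda)\sum_\rho \delta_\rho = (\dim\Lambda)^2 L_{\widehat{x}(\Lambda)}$, the inner sum being over the eigenvalues of $\widehat{x}(\Lambda)$ with multiplicity. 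Summing over $\Lambda$ and dividing by $|G|$ (and using $\sum_\Lambda(\dim\Lambda)^2 = |G|$, so that the weights add up to one) gives
\[
L_{P_x} = \frac{1}{|G|}\sum_{\Lambda \in \widehat{G}}(\dim\Lambda)^2\, L_{\widehat{x}(\Lambda)} = \sum_{\Lambda \in \widehat{G}} \mu_G(\Lambda)\, L_{\widehat{x}(\Lambda)},
\]
which is the claimed mixture.

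For singular values similarity is insufficient, so I would upgrade the intertwiner to a unitary: the Fourier transform, suitably normalised on each block, is precisely the Plancherel isometry of $L_2(G)$ onto $\oplus_\Lambda M(\dim\Lambda)$ recalled above, and since $M_Q$ commutes with scalar multiplication on $M(\dim\Lambda)$ the per-block rescaling does not alter the intertwined operator; hence $P_x$ is unitarily equivalent to $\mathcal{M}_x$, and therefore $\sqrt{P_x P_x^\ast}$ to $\oplus_\Lambda \sqrt{M_{\widehat{x}(\Lambda)} M_{\widehat{x}(\Lambda)}^\ast}$. A one-line computation with the inner product on $M(\dim\Lambda)$ gives $(M_Q)^\ast = M_{Q^\ast}$, whence $M_{\widehat{x}(\Lambda)}M_{\widehat{x}(\Lambda)}^\ast = M_{\widehat{x}(\Lambda)\widehat{x}(\Lambda)^\ast}$; moreover for positive semidefinite $P$ the operator $M_{\sqrt{P}}$ is positive semidefinite with $(M_{\sqrt{P}})^2 = M_P$, so by uniqueness of the positive square root $\sqrt{M_{\widehat{x}(\Lambda)}M_{\widehat{x}(\Lambda)}^\ast} = M_{\sqrt{\widehat{x}(\Lambda)\widehat{x}(\Lambda)^\ast}}$. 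Applying the Proposition to $Q = \sqrt{\widehat{x}(\Lambda)\widehat{x}(\Lambda)^\ast}$, whose eigenvalues are the singular values of $\widehat{x}(\Lambda)$, and repeating the bookkeeping of the previous paragraph yields
\[
L_{\sqrt{P_x P_x^\ast}} = \frac{1}{|G|}\sum_{\Lambda \in \widehat{G}}(\dim\Lambda)^2\, L_{\sqrt{\widehat{x}(\Lambda)\widehat{x}(\Lambda)^\ast}} = \sum_{\Lambda \in \widehat{G}} \mu_G(\Lambda)\, L_{\sqrt{\widehat{x}(\Lambda)\widehat{x}(\Lambda)^\ast}}.
\]

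The only genuinely delicate point is the distinction just made: eigenvalues are carried over for free by the bare Fourier similarity, but singular values are not similarity invariant, so one must use the normalised Plancherel \emph{isometry} together with the elementary facts $(M_Q)^\ast = M_{Q^\ast}$ and $\sqrt{M_P} = M_{\sqrt P}$ for positive semidefinite $P$. Once this is in place, the remainder is the routine repackaging of the Proposition's multiplicity count as a $\mu_G$-mixture.
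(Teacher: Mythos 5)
Your argument is correct, and its eigenvalue half is essentially the paper's own proof: both pass to $\mathcal{M}_x=\oplus_{\Lambda}M_{\widehat{x}(\Lambda)}$, apply the Proposition blockwise, and read off the weights $(\dim\Lambda)^2/|G|=\mu_G(\Lambda)$ from the multiplicity count. Where you genuinely diverge is the singular value half. The paper never upgrades the Fourier similarity to a unitary equivalence: it observes that $P_x^\ast=P_y$ with $y(g)=\overline{x(g^{-1})}$, so that $P_xP_x^\ast=P_{x\ast y}$ is again a convolution operator, applies the already-proved eigenvalue statement to $P_{x\ast y}$, computes $\widehat{x\ast y}(\Lambda)=\widehat{x}(\Lambda)\widehat{x}(\Lambda)^\ast$ from unitarity of the representations, and finally passes from $P_xP_x^\ast$ to $\sqrt{P_xP_x^\ast}$ by taking square roots of eigenvalues. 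Your route instead goes through the Plancherel isometry together with the identities $(M_Q)^\ast=M_{Q^\ast}$ and $\sqrt{M_P}=M_{\sqrt{P}}$ for positive semidefinite $P$; this is sound, with two small points worth making explicit: the inner product on $L_2(G)$ used for the isometry is $|G|^{-1}$ times the standard inner product on $\C^G$, so the adjoint of $P_x$ (and hence $\sqrt{P_xP_x^\ast}$) is the same for both, and the per-block scalar rescaling indeed commutes with each $M_{\widehat{x}(\Lambda)}$, so the intertwined operator is unchanged. The paper's reduction is shorter and purely algebraic, reusing the eigenvalue case verbatim and avoiding any adjoint computations on $M(\dim\Lambda)$; yours is slightly longer but establishes the stronger and conceptually transparent fact that $P_x$ is unitarily equivalent to $\oplus_{\Lambda}M_{\widehat{x}(\Lambda)}$, so singular values, not just eigenvalues, are transported directly.
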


\begin{proof}

We have
\begin{align*}
L_{P_x} & = \frac{1}{|G|} \sum_{\lambda \in \spec(P_x)} \delta_{\lambda} \\
& = \sum_{\Lambda \in \widehat{G}} \frac{1}{|G|}\sum_{\lambda \in \spec(M_{\widehat{x}(\Lambda)})} \delta_{\lambda} \\
& = \sum_{\Lambda \in \widehat{G}} \frac{\dim(\Lambda)^2}{|G|} \frac{1}{\dim(\Lambda)} \sum_{\lambda \in \spec(\widehat{x}(\Lambda))}\delta_\lambda\\
& =  \sum_{\Lambda \in \widehat{G}} \frac{\dim(\Lambda)^2}{|G|} L_{\widehat{x}(\Lambda)},
\end{align*}
which proves the assertion for $P_x$. The statement for $\sqrt{P_{x} P_{x}^\ast}$ follows from two elementary observations. First, $P_x^\ast$ is equal to $P_y$, where for $g \in G$, $y(g) = \overline{x(g^{-1})}$ and thus by associativity of the convolution we get $P_x P_x^\ast = P_{x\ast y}$, which by the already proved part of the corollary yields
\begin{displaymath}
L_{P_x P_x^\ast} = L_{P_{x\ast y}} = \sum_{\Lambda \in \widehat{G}} \frac{\dim(\Lambda)^2}{|G|} L_{\widehat{x\ast y}(\Lambda)}.
\end{displaymath}
Next, we have $\widehat{x\ast y}(\Lambda) = \widehat{x}(\Lambda)\widehat{y}(\Lambda)$ and
\begin{displaymath}
\widehat{y}(\Lambda) = \sum_{g \in G} y(g) \Lambda(g) = \sum_{g \in G} \overline{x(g)} \Lambda(g^{-1}) = \Big(\sum_{g\in G} x(g) \Lambda(g)\Big)^\ast = \widehat{x}(\Lambda)^\ast,
\end{displaymath}
where in the last line we used the fact that $\Lambda(g^{-1}) = \Lambda(g)^{-1} = \Lambda(g)^\ast$ as the representations we consider are unitary.

Thus $\widehat{x\ast y}(\Lambda) = \widehat{x}(\Lambda)\widehat{x}(\Lambda)^\ast$ and we get
\begin{displaymath}
L_{P_x P_x^\ast} = L_{P_{x\ast y}} = \sum_{\Lambda \in \widehat{G}} \frac{\dim(\Lambda)^2}{|G|} L_{\widehat{x}(\Lambda)\widehat{x}(\Lambda)^\ast}.
\end{displaymath}
Passing from $P_x P_x^\ast$ to $\sqrt{P_x P_x^\ast}$ is straightforward as for a nonnegative definite matrix $A$ the eigenvalues of $\sqrt{A}$ are the square roots of the eigenvalues of $A$.
\end{proof}

We can now easily obtain the description of the distribution of the spectral measure of a random convolution operator given by a sequence of standard complex Gaussian variables in terms of independent Ginibre ensembles.

\begin{prop}\label{prop:Gaussian-distribution}Let $G$ be a finite group and let $X = (X_g)_{g\in G}$ be a family of i.i.d. standard complex Gaussian random variables. Let also $(\Gamma_\Lambda)_{\Lambda \in \widehat{G}}$ be a family of independent Ginibre random matrices (i.e. for each $\Lambda$, $\sqrt{\dim(\Lambda)}\Gamma_\Lambda$ is a matrix with i.i.d. standard complex Gaussian entries). Then
\begin{itemize}
\item[(i)] the spectral measure $L_{\frac{1}{\sqrt{|G|}}P_X}$ has the same distribution as the random measure
\begin{displaymath}
\sum_{\Lambda \in \widehat{G}} \mu_G(\Lambda) L_{\Gamma_\Lambda},
\end{displaymath}
and
\item[(ii)] the spectral measure $L_{\frac{1}{\sqrt{|G|}}\sqrt{P_XPX^\ast}}$ has the same distribution as the random measure
\begin{displaymath}
\sum_{\Lambda \in \widehat{G}} \mu_G(\Lambda) L_{\sqrt{\Gamma_\Lambda \Gamma_\Lambda^\ast}}.
\end{displaymath}
\end{itemize}
\end{prop}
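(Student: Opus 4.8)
The plan is to derive the statement from Corollary~\ref{cor:deterministic-spectrum-explained}, which already exhibits the spectral measure of any convolution operator as a Plancherel-weighted mixture of the spectral measures of its Fourier blocks $\widehat{x}(\Lambda)$; the only thing left is to identify the \emph{joint} distribution of the random blocks $\widehat{X}(\Lambda)$, $\Lambda\in\widehat{G}$. Concretely, I would apply the corollary to the (conditionally deterministic) sequence $x=\frac{1}{\sqrt{|G|}}X$, and use the scaling identities $P_x=\frac{1}{\sqrt{|G|}}P_X$, $\widehat{x}(\Lambda)=\frac{1}{\sqrt{|G|}}\widehat{X}(\Lambda)$, $\frac{1}{\sqrt{|G|}}\sqrt{P_XP_X^\ast}=\sqrt{P_xP_x^\ast}$ and $\frac{1}{|G|}\widehat{X}(\Lambda)\widehat{X}(\Lambda)^\ast=\widehat{x}(\Lambda)\widehat{x}(\Lambda)^\ast$ to get, for every realization of $X$,
\[
L_{\frac{1}{\sqrt{|G|}}P_X}=\sum_{\Lambda\in\widehat{G}}\mu_G(\Lambda)\,L_{\frac{1}{\sqrt{|G|}}\widehat{X}(\Lambda)},\qquad L_{\frac{1}{\sqrt{|G|}}\sqrt{P_XP_X^\ast}}=\sum_{\Lambda\in\widehat{G}}\mu_G(\Lambda)\,L_{\frac{1}{\sqrt{|G|}}\sqrt{\widehat{X}(\Lambda)\widehat{X}(\Lambda)^\ast}}.
\]
Both left-hand sides are therefore fixed measurable functionals of the single family $\bigl(\frac{1}{\sqrt{|G|}}\widehat{X}(\Lambda)\bigr)_{\Lambda\in\widehat{G}}$, so it suffices to prove that this family has the same law as $(\Gamma_\Lambda)_{\Lambda\in\widehat{G}}$.

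To establish that, I would write $\widehat{X}(\Lambda)_{ij}=\sum_{g\in G}X_g\,\Lambda_{ij}(g)$ and note that, being a $\C$-linear image of the i.i.d. standard complex Gaussian family $(X_g)_{g\in G}$, the whole collection $\bigl(\widehat{X}(\Lambda)_{ij}\bigr)_{\Lambda,i,j}$ is jointly centered complex Gaussian; hence its law is determined by the covariances $\E\widehat{X}(\Lambda)_{ij}\overline{\widehat{X}(\Lambda')_{kl}}$ and the pseudo-covariances $\E\widehat{X}(\Lambda)_{ij}\widehat{X}(\Lambda')_{kl}$. Since $\E X_g\overline{X_h}=\indbr{g=h}$ and $\E X_gX_h=0$, the second family vanishes identically, while the first equals $\sum_{g\in G}\Lambda_{ij}(g)\overline{\Lambda'_{kl}(g)}$, which by the Schur orthogonality relations is $\frac{|G|}{\dim\Lambda}\indbr{\Lambda=\Lambda',\ i=k,\ j=l}$ (consistent with the normalization $\|\Lambda_{ij}\|_2=1/\sqrt{\dim\Lambda}$ recalled earlier). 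Consequently the entries of all the $\widehat{X}(\Lambda)$ are independent, circularly symmetric complex Gaussian variables of variance $|G|/\dim\Lambda$; equivalently, the matrices $\widehat{X}(\Lambda)$ are mutually independent and $\sqrt{\dim\Lambda}\cdot\frac{1}{\sqrt{|G|}}\widehat{X}(\Lambda)$ has i.i.d. standard complex Gaussian entries, which is exactly the defining property of $\Gamma_\Lambda$. Feeding this equality in law back into the two displayed functionals yields (i) and (ii).

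All of this is routine once organized this way, and I do not expect a real obstacle; the one place that needs genuine care is the second-moment computation in the middle step. Specifically, one must check that the pseudo-covariances $\E\widehat{X}(\Lambda)_{ij}\widehat{X}(\Lambda')_{kl}$ vanish — so that the Fourier blocks are honest Ginibre matrices and not some other complex Gaussian ensemble — and that the variance $|G|/\dim\Lambda$ produced by Schur orthogonality matches the $\frac{1}{\sqrt{\dim\Lambda}}$-scaling in the definition of the $\Gamma_\Lambda$. I would also flag that it is the \emph{joint} Gaussianity across distinct $\Lambda$ (which holds because every block is built from the same underlying family $(X_g)_{g\in G}$) that upgrades pairwise uncorrelatedness into full independence of the whole family, not merely of individual blocks; without this observation one would only recover the marginal laws of the $\widehat{X}(\Lambda)$.
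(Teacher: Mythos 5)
Your proposal is correct and follows essentially the same route as the paper: reduce via Corollary~\ref{cor:deterministic-spectrum-explained} to identifying the joint law of the Fourier blocks, observe joint Gaussianity as a linear image of the $X_g$'s, and compute covariances and pseudo-covariances via Schur orthogonality (the paper's Lemma~\ref{le:covariances}) to conclude the blocks are independent Ginibre matrices. The only cosmetic difference is that you phrase the second-moment computation in the complex covariance/pseudo-covariance language while the paper translates it into real and imaginary parts, which amounts to the same calculation.
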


From the above proposition and Remark \ref{rem:rho-and-theta-explained} we immediately obtain

\begin{corollary}\label{cor:Gaussian-expected-measure}
Under the assumptions of Proposition \ref{prop:Gaussian-distribution}, the average spectral measure of $\frac{1}{\sqrt{|G|}}P_X$ satisfies
\begin{displaymath}
\bar{L}_{\frac{1}{\sqrt{|G|}}P_X} := \E L_{\frac{1}{\sqrt{|G|}} P_X} = \sum_{n \in \Z_+} \widetilde{\mu}_G(n)\theta_n.
\end{displaymath}
Similarly, the average spectral measure of $\frac{1}{\sqrt{|G|}}\sqrt{P_XP_X^\ast}$ equals
\begin{displaymath}
\bar{L}_{\frac{1}{\sqrt{|G|}}\sqrt{P_XP_X^\ast}} := \E L_{\frac{1}{\sqrt{|G|}}\sqrt{P_XP_X^\ast}} = \sum_{n \in \Z_+} \widetilde{\mu}_G(n)\rho_n.
\end{displaymath}
\end{corollary}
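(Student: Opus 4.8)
The corollary follows almost immediately from Proposition \ref{prop:Gaussian-distribution} by taking expectations, so the plan is essentially to unwind the definitions and invoke the identification of the averaged Ginibre spectral measures from Remark \ref{rem:rho-and-theta-explained}. First I would take the equality in distribution of $L_{\frac{1}{\sqrt{|G|}}P_X}$ and $\sum_{\Lambda \in \widehat G} \mu_G(\Lambda) L_{\Gamma_\Lambda}$ from Proposition \ref{prop:Gaussian-distribution}(i), and observe that since these random measures have the same law, they have the same mean (as measures on $\C$, tested against bounded continuous functions). Taking expectations and using linearity together with the fact that $\mu_G$ is a deterministic measure, one gets
\begin{displaymath}
\E L_{\frac{1}{\sqrt{|G|}}P_X} = \sum_{\Lambda \in \widehat G} \mu_G(\Lambda)\, \E L_{\Gamma_\Lambda}.
\end{displaymath}
By Remark \ref{rem:rho-and-theta-explained}, $\E L_{\Gamma_\Lambda} = \theta_{\dim \Lambda}$, so the right-hand side becomes $\sum_{\Lambda \in \widehat G} \mu_G(\Lambda)\theta_{\dim \Lambda}$. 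Grouping the irreducible representations $\Lambda$ by their common dimension $n = \dim \Lambda$ and recalling that $\mu_G(\Lambda) = (\dim \Lambda)^2/|G|$ together with the definition \eqref{eq:mu-tilde} of $\widetilde{\mu}_G$, this sum collapses to $\sum_{n \in \Z_+} \widetilde{\mu}_G(n)\theta_n$, which is the first claim.

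For the singular value statement the argument is identical, using Proposition \ref{prop:Gaussian-distribution}(ii) and the fact that $\E L_{\sqrt{\Gamma_\Lambda \Gamma_\Lambda^\ast}} = \rho_{\dim \Lambda}$, again from Remark \ref{rem:rho-and-theta-explained}. The only mild point worth spelling out is the interchange of expectation and the (finite) sum over $\widehat G$: since $G$ is a fixed finite group, $\widehat G$ is finite and the sum has finitely many terms, so linearity of expectation applies without any integrability subtlety; each $\E L_{\Gamma_\Lambda}$ is a bona fide probability measure because $\Gamma_\Lambda$ has a density, so all eigenvalues are distinct almost surely and the empirical measure is well defined. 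One should also note that the notion of the expected value $\E L_A$ of a random probability measure is taken in the usual weak sense, i.e. $\E L_A$ is the measure characterized by $\int f \, d(\E L_A) = \E \int f \, dL_A$ for bounded continuous $f$, and that equality in distribution of random measures indeed forces equality of these expectations.

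There is essentially no obstacle here; the content has already been extracted in the preceding corollary and proposition, and what remains is bookkeeping. If anything, the one place to be slightly careful is the passage from a sum indexed by $\Lambda \in \widehat G$ to one indexed by $n \in \Z_+$: one must check that $\sum_{\Lambda : \dim \Lambda = n} \mu_G(\Lambda) = \frac{n^2}{|G|}|\{\Lambda \in \widehat G : \dim \Lambda = n\}| = \widetilde{\mu}_G(n)$, which is precisely \eqref{eq:mu-tilde}. With that identity in hand the corollary is immediate.
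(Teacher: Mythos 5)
Your proposal is correct and is exactly the argument the paper has in mind: the paper gives no separate proof, stating only that the corollary follows immediately from Proposition \ref{prop:Gaussian-distribution} and Remark \ref{rem:rho-and-theta-explained}, which is precisely what you carry out (take expectations in the distributional identity, use $\E L_{\Gamma_\Lambda}=\theta_{\dim\Lambda}$ and $\E L_{\sqrt{\Gamma_\Lambda\Gamma_\Lambda^\ast}}=\rho_{\dim\Lambda}$, then group representations by dimension via \eqref{eq:mu-tilde}). Your extra remarks on the weak-sense definition of $\E L_A$ and the finiteness of $\widehat G$ are harmless bookkeeping that the paper leaves implicit.
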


\begin{proof}[Proof of Proposition \ref{prop:Gaussian-distribution}]
In view of Corollary \ref{cor:deterministic-spectrum-explained} it is enough to show that the entries of the matrices $\widehat{X}(\Lambda)$, $\Lambda \in \widehat{G}$ are jointly independent and that for all $\Lambda$ and $i,j =1,\ldots,\dim(\Lambda)$, the random variable $\sqrt{\frac{\dim(\Lambda)}{|G|}}\widehat{X}(\Lambda)_{ij}$ has a standard complex Gaussian distribution. Note first that
the random variables $\Re \widehat{X}(\Lambda)_{ij}, \Im \widehat{X}(\Lambda)_{ij}, \Lambda \in \widehat{G}, i,j \in \{1,\ldots,\dim(\Lambda)\}$ are jointly Gaussian as linear transformations of the Gaussian vector $(\Re X_g, \Im X_g\colon g \in G)$. Thus it remains to calculate the covariances.

Recall that for any $g \in G$ we have $\E X_g = \E X_g^2 = 0$ and $\E |X_g|^2 = 1$. We will now use the following lemma, which is proved below.

\begin{lemma}\label{le:covariances}
Let $X = (X_g)_{g \in G}$ be independent random variables such that for each $g \in G$, $\E X_g = \E X_g^2 = 0$, $\E |X_g|^2 = 1$. Then
the real random variables $\Re \widehat{X}(\Lambda)_{ij}, \Im \widehat{X}(\Lambda)_{ij}$, $\Lambda \in \widehat{G}$, $i,j \in \{1,\ldots,\dim(\Lambda)\}$
are pairwise uncorrelated. Moreover for all such $\Lambda, i, j $,
\begin{displaymath}
\E \widehat{X}(\Lambda)_{ij} = 0, \; \E (\Re \widehat{X}(\Lambda)_{ij})^2 = \E (\Im \widehat{X}(\Lambda)_{ij})^2 = \frac{|G|}{2\dim(\Lambda)}.
\end{displaymath}
\end{lemma}
Thus  the random variables $\sqrt{\dim(\Lambda)/|G|}\Re \widehat{X}(\Lambda)_{ij}, \sqrt{\dim(\Lambda)/|G|} \Im \widehat{X}(\Lambda), \Lambda \in \widehat{G}$, $i,j \le \dim(\Lambda)$ indeed form a family of i.i.d. Gaussian variables of mean zero and variance 1/2.
\end{proof}

\begin{proof}[Proof of Lemma \ref{le:covariances}]

Obviously $\E \widehat{X}(\Lambda)_{ij} = 0$. Moreover, using independence of $X_g$'s, for any $\Lambda,\Delta \in \widehat{G}$ $i,j \le \dim(\Lambda)$ and $k,l \le \dim(\Delta)$ we get
\begin{align*}
\E \widehat{X}(\Lambda)_{ij} \overline{\widehat{X}(\Delta)_{kl}} &= \E \Big(\sum_{g \in G} X_g \Lambda(g)_{ij}\Big)\overline{\Big(\sum_{g \in G} X_g \Delta(g)_{kl}\Big)}\\
&  = \sum_{g \in G} \Lambda(g)_{ij} \overline{\Delta(g)_{kl}}.
\end{align*}
By basic properties of representations (see e.g. \cite[Chapter 2B, Corollary 2,3]{MR964069}), the latter sum equals $\frac{|G|}{\dim(\Lambda)}$ if $\Lambda = \Delta$, $i=k$, $j= l$ and vanishes otherwise.
We also have $\E X_g^2 = 0$, which gives
\begin{displaymath}
\E \widehat{X}(\Lambda)_{ij} \widehat{X}(\Delta)_{kl} = 0
\end{displaymath}
for all $\Lambda,\Delta \in \widehat{G}$, $i,j \le \dim(\Lambda)$, $k,l\le \dim(\Delta)$.

It is easy to see that for real random variables $a,b$, the conditions $\E|a + b\sqrt{-1}|^2 = 1$, $\E (a + b\sqrt{-1})^2 = 0$ hold if and only if $\E a^2 = \E b^2 = 1/2$ and $\E ab = 0$, which together with the above equalities shows that for any $\Lambda$ and $i,j \le \dim(\Lambda)$, $\Re \widehat{X}(\Lambda)_{ij}, \Im \widehat{X}(\Lambda)_{ij}$ are indeed uncorrelated with mean zero and variance $\frac{|G|}{2\dim(\Lambda)}$. Similarly, if $c,d$ is another pair of real random variables, then $\E(a+b\sqrt{-1})(c+d\sqrt{-1}) = \E (a+b\sqrt{-1})(c-d\sqrt{-1}) = 0$ if and only if $\E ac = \E ad = \E bc = \E bd = 0$, which shows that the remaining covariances also vanish, thus ending the proof.
\end{proof}

We are now ready to prove Theorem \ref{thm:eigenvalues}.

\begin{proof}[Proof of Theorem \ref{thm:eigenvalues}]
  To simplify the notation, in what follows we are going to drop the superscript $e$ and write simply $L_N, L_\infty$ instead of $L^e_N$, $L^e_\infty$.
  It is well known (see Lemma \ref{le:convergence-functions} in the Appendix) that $L_N$ converges to $L_\infty$ weakly in probability if and only if for every bounded continuous function $f \colon \R\to \R$, the sequence of complex random variables $\int_{\C} f dL_N$ converges in probability to $\int_\C fdL_\infty$. Consider thus a bounded continuous function $f$.

  Note that by the Ginibre-Mehta circular law, as $n \to \infty$, the measures $\theta_n$
  converge weakly to $\theta_\infty$ and also if $\Gamma_n$ is an $n\times n$ random matrix, such that the entries of $\sqrt{n}\Gamma_n$ are i.i.d. standard complex Gaussian variables, then $L_{\Gamma_n}$ converges weakly in probability to $\theta_\infty$. In particular, as $n\to \infty$ we have $\int_{\C} f dL_{\Gamma_n} \to \int_{\C} fd\theta_\infty$ in probability and thanks to boundedness of $f$ also in $L_2$.

  Fix now $\varepsilon > 0$ and let $n_0$ be such that

  \begin{itemize}
      \item for $n > n_0$,
        \begin{align}\label{eq:L_2-distance}
            \Big\|\int_{\C} f dL_{\Gamma_n} - \int_\C f d\theta_\infty\Big\|_2 < \varepsilon,
        \end{align}
    and
    \item for $n > n_0$,
        \begin{align}\label{eq:error-at-infinity}
            \Big|\int_{\C} f d\theta_n - \int_\C f d\theta_\infty\Big| < \varepsilon.
        \end{align}
  \end{itemize}

Recall the notation of Proposition \ref{prop:Gaussian-distribution}.
We will now estimate the $L_2$ error
\begin{align*}
  \mathcal{E}_N := \Big\|\int_\C f dL_N - \int_\C fdL_\infty \Big\|_2& =  \Big\|\sum_{\Lambda \in \widehat{G}_N} \mu_{G_N}(\Lambda) \int_\C f d L_{\Gamma_\Lambda} - \sum_{n\in\overline{\Z}_+}\mu(n) \int_\C fd\theta_n\Big\|_2.
\end{align*}

Recall that $\widetilde{\mu}_{G_N}(n) = \sum_{\stackrel{\Lambda \in \widehat{G}_N}{\dim \Lambda = n}} \mu_{G_N}(\Lambda)$, therefore by the triangle inequality we get
\begin{align}\label{eq:5-term-decomposition}
\mathcal{E}_N \le \sum_{k=1}^5 \mathcal{E}_{k,N},
\end{align}
where
\begin{align*}
  \mathcal{E}_{1,N}& = \Big\|\sum_{\stackrel{\Lambda \in \widehat{G}_N}{\dim \Lambda \le n_0}} \mu_{G_N}(\Lambda) \Big(\int_\C f d L_{\Gamma_\Lambda} - \int_\C fd\theta_{\dim \Lambda}\Big)\Big\|_2,\\
  \mathcal{E}_{2,N} & =  \Big|\sum_{n=1}^{n_0}(\widetilde{\mu}_{G_N}(n) - \mu(n))\int_\C fd\theta_n\Big| ,\\
  \mathcal{E}_{3,N} &= \Big\|\sum_{\stackrel{\Lambda \in \widehat{G}}{\dim(\Lambda) > n_0}}  \mu_{G_N}(\Lambda)  \Big(\int_\C f dL_{\Gamma_\Lambda} -  \int_\C f d\theta_\infty\Big)\Big\|_2,\\
  \mathcal{E}_{4,N} &= \Big|\Big(\widetilde{\mu}_{G_N}([n_0+1,\infty]) - \mu([n_0+1,\infty])\Big) \int_\C f d\theta_\infty \Big|,\\
  \mathcal{E}_{5,N} & = \Big| \mu([n_0+1,\infty])\int_\C f d\theta_\infty - \sum_{n_0 < n \le \infty} \mu(n)\int_\C fd\theta_n\Big|.
\end{align*}
Let us now estimate the terms $\mathcal{E}_{i,N}$.

By independence of $\Gamma_{\Lambda}, \Lambda \in \widehat{G}$, and the definition of the probability measure $\mu_{G_N}$ we have
\begin{align}
\mathcal{E}_{1,N}^2 &= \sum_{\stackrel{\Lambda \in \widehat{G}_N}{\dim \Lambda \le n_0}} \mu_{G_N}(\Lambda)^2 \Var (\int_\C f dL_{\Gamma_\Lambda})
 \le \|f\|_\infty^2 \sum_{\stackrel{\Lambda \in \widehat{G}_N}{\dim \Lambda \le n_0}} \mu_{G_N}(\Lambda)  \frac{(\dim \Lambda)^2}{|G_N|}\nonumber\\
&\le \frac{\|f\|_\infty  n_0^2}{|G_N|} < \varepsilon \label{eq:E_1}
\end{align}
for $N$ large enough.

The term $\mathcal{E}_{2,N}$ converges to zero as $N\to\infty$ since $f$ is bounded, by assumption $\widetilde{\mu}_{G_N}$ converges weakly to $\mu$ and points $1,\ldots,n_0$ are isolated in $\Z_+$ (which implies that $\widetilde{\mu}_{G_N}(k) \to\mu(k)$). Thus for large $N$,
\begin{align}\label{eq:E_2}
\mathcal{E}_{2,N} < \varepsilon.
\end{align}

By inequality \eqref{eq:L_2-distance} and the triangle inequality in $L_2$ we have
\begin{align}\label{eq:E_3}
\mathcal{E}_{3,N} \le \sum_{\stackrel{\Lambda \in \widehat{G}}{\dim(\Lambda) > n_0}}  \mu_{G_N}(\Lambda) \varepsilon \le \varepsilon.
\end{align}

Since the set $[n_0+1,\infty] \subseteq \overline{\Z}_+$ has empty boundary, we get $\widetilde{\mu}_{G_N}([n_0+1,\infty]) \to \mu([n_0+1,\infty])$ and thus for $N$ large enough
\begin{align}\label{eq:E_4}
\mathcal{E}_{4,N} < \varepsilon.
\end{align}

Finally, by \eqref{eq:error-at-infinity}, we get
\begin{align}\label{eq:E_5}
\mathcal{E}_{5,N} \le \sum_{n_0 < n < \infty} \mu(n)|\int_\C fd\theta_n - \int_\C fd\theta_\infty| \le \varepsilon.
\end{align}

Combining \eqref{eq:5-term-decomposition} with \eqref{eq:E_1}--\eqref{eq:E_5} we get that $\mathcal{E}_N \le 5\varepsilon$ for $N$ large enough, which ends the proof of the theorem.
\end{proof}

\begin{remark} Let us note that a straightforward adaptation of the above argument gives the proof of the Gaussian version of Theorem \ref{thm:singular-values}. However, to get it in full generality we will need some additional ingredients, presented in the next section.
\end{remark}

\section{Universality of the limiting distribution of singular values}\label{sec:universality}

We will now prove Theorem \ref{thm:singular-values}. Before we proceed with the details of the argument, let us present its outline. We will again rely on Corollary \ref{cor:deterministic-spectrum-explained} and relate the spectral distribution of $\widehat{X^N}(\Lambda)$ to that of a Ginibre matrix. For small dimensional representations it will be done at the level of joint distributions of entries, simply by means of the Central Limit Theorem. For high dimensional $\Lambda$ however, CLT or Berry-Esseen bounds available in the literature are not strong enough to give a good Gaussian approximation of the distribution of the matrix. Therefore we will use the moment method to show universality at the level of expected spectral measure and combine it with concentration inequalities for traces of polynomials of random matrices due to Meckes and Szarek \cite{MR2869165} to get a bound in probability. To use the moment method and concentration we will need to truncate the random variables $X^N_g$, which can be done by an application of the Hoffman-Wielandt inequality.

\subsection{Representations of small dimension}

Let us start with the part corresponding to the small dimensional representations.
We have the following lemma, inspired by the work of Meckes \cite{MR3069372}.

\begin{lemma}\label{le:finite-dimension-CLT}
Fix a positive integer $n$. Let $G_N$ be a sequence of groups with sizes tending to $\infty$ and let $\Lambda_N$, $\Delta_N$ be two different irreducible representations of $G_N$ of dimension at most $n$. Let $X^N$ be as in Theorem \ref{thm:singular-values} and let $\Gamma_\Lambda$ be defined as in Proposition \ref{prop:Gaussian-distribution}. Let also $f \colon \R_+\to \R$ be a bounded continuous function. Then as $N\to \infty$ we have the convergence
\begin{align}\label{eq:CLT-expectations}
\Big|\E \int_{\R_+} f d L_{\frac{1}{\sqrt{|G_N|}}\sqrt{\widehat{X}^N(\Lambda_N) \widehat{X}^N(\Lambda_N)^\ast}} - \E \int_{\R_+} f d L_{\sqrt{\Gamma_{\Lambda_N} \Gamma_{\Lambda_N}^\ast}}\Big| \to 0,
\end{align}
and
\begin{align}\label{eq:CLT-variances}
\Cov\Big(\int_{\R_+} f d L_{\frac{1}{\sqrt{|G_N|}}\sqrt{\widehat{X}^N(\Lambda_N)\widehat{X}^N(\Lambda_N)^\ast}},\int_{\R_+} f d L_{\frac{1}{\sqrt{|G_N|}}\sqrt{\widehat{X}^N(\Delta_N)\widehat{X}^N(\Delta_N)^\ast}}\Big) \to 0.
\end{align}
\end{lemma}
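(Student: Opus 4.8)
The plan is to reduce both assertions to a multivariate Central Limit Theorem for the entries of $\widehat{X}^N(\Lambda_N)$ (and jointly with those of $\widehat{X}^N(\Delta_N)$), together with the fact that the functional $M \mapsto \int_{\R_+} f\, dL_{\sqrt{MM^\ast}}$ is bounded and continuous on $M(n)$ (or on $M(n)\times M(n')$), so the conclusions follow from weak convergence of laws. Concretely, I would first renormalize: set $Y^N(\Lambda) = \sqrt{\dim(\Lambda_N)/|G_N|}\,\widehat{X}^N(\Lambda_N)$, so that by Lemma \ref{le:covariances} the real and imaginary parts of the entries of $Y^N(\Lambda)$ are uncorrelated, mean zero, variance $1/2$; and similarly for $\Delta_N$. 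Since $\dim \Lambda_N, \dim \Delta_N \le n$, after passing to a subsequence these dimensions are constant, say $d$ and $d'$, so we are dealing with random vectors of fixed finite dimension (here one uses that $f$ is bounded, hence $\int f\,dL$ is a bounded random variable, so Cesàro/subsequence arguments suffice — convergence along every subsequence to the same limit gives \eqref{eq:CLT-expectations}).

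The key step is the CLT itself. Writing $Y^N(\Lambda)_{ij} = \sqrt{\dim\Lambda_N/|G_N|}\sum_{g\in G_N} X_g \Lambda_N(g)_{ij}$, this is a sum of $|G_N|$ independent complex random variables, each a fixed copy of $\xi$ times a bounded coefficient $\sqrt{\dim\Lambda_N/|G_N|}\,\Lambda_N(g)_{ij}$ of modulus $O(|G_N|^{-1/2})$ (unitarity of $\Lambda_N$ gives $|\Lambda_N(g)_{ij}|\le 1$). The whole real random vector formed by all real and imaginary parts of entries of $Y^N(\Lambda)$ and $Y^N(\Delta)$ is thus a sum of $|G_N|$ independent mean-zero vectors, each of Euclidean norm $O(|G_N|^{-1/2})$, hence the Lindeberg condition holds trivially (uniform boundedness of summands by $O(|G_N|^{-1/2})\to 0$); the covariance structure converges — in fact equals, for each $N$, the identity-type matrix prescribed by Lemma \ref{le:covariances}, with the cross-covariances between the $\Lambda_N$-block and the $\Delta_N$-block vanishing because $\Lambda_N\neq\Delta_N$ are inequivalent irreducibles (Schur orthogonality, as used in the proof of Lemma \ref{le:covariances}). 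Therefore by the Lindeberg–Feller multivariate CLT, $(Y^N(\Lambda), Y^N(\Delta))$ converges in distribution to $(\Gamma', \Gamma'')$ where $\Gamma', \Gamma''$ are independent Ginibre matrices of sizes $d, d'$ — precisely the joint law of $(\Gamma_{\Lambda_N}, \Gamma_{\Delta_N})$ along the subsequence.

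From here both claims are immediate. For \eqref{eq:CLT-expectations}: $\int_{\R_+} f\,dL_{\sqrt{MM^\ast}}$ is a continuous bounded function of $M\in M(d)$ (continuity of singular values in the matrix entries, boundedness from $\|f\|_\infty$), so weak convergence of $Y^N(\Lambda)$ to a Ginibre matrix gives convergence of expectations. For \eqref{eq:CLT-variances}: the covariance equals $\E[\Phi(Y^N(\Lambda))\Psi(Y^N(\Delta))] - \E\Phi(Y^N(\Lambda))\,\E\Psi(Y^N(\Delta))$ where $\Phi(M) = \int f\,dL_{\sqrt{MM^\ast}}$ and similarly $\Psi$; by the joint CLT above this converges to $\E\Phi(\Gamma')\,\E\Psi(\Gamma'') - \E\Phi(\Gamma')\,\E\Psi(\Gamma'') = 0$ using independence of $\Gamma'$ and $\Gamma''$. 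The main obstacle — really the only point requiring care — is bookkeeping the subsequence argument so that the statement is genuinely about the given sequence $G_N$ rather than a subsequence: one argues that if either \eqref{eq:CLT-expectations} or \eqref{eq:CLT-variances} failed along some subsequence, one could extract a further subsequence with constant representation dimensions, apply the CLT there, and reach a contradiction. A secondary technical point is handling the (harmless) possibility that $\dim\Lambda_N$ or $\dim\Delta_N$ is not eventually constant without passing to a subsequence, which is why the subsequence formulation is cleanest.
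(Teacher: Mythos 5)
Your proposal is correct and follows essentially the same route as the paper's proof: pass to a subsequence on which $\dim\Lambda_N$ and $\dim\Delta_N$ are constant, apply a joint Lindeberg CLT to the real random vector of entries using the covariance structure of Lemma \ref{le:covariances} (including vanishing cross-covariances from Schur orthogonality), use that $M\mapsto\int f\,dL_{\sqrt{MM^\ast}}$ (and the product functional) is bounded and continuous, and conclude \eqref{eq:CLT-variances} from independence of the limiting Ginibre matrices. Two cosmetic points only: the Lindeberg condition holds not because the summands are uniformly $O(|G_N|^{-1/2})$ in norm (they are not when $\xi$ is unbounded) but because the coefficients are $O(|G_N|^{-1/2})$ while $\xi$ has a fixed square-integrable law, and with your normalization $\sqrt{\dim\Lambda_N/|G_N|}\,\widehat{X}^N(\Lambda_N)$ the weak limit is $\sqrt{d}\,\Gamma_{\Lambda_N}$ rather than $\Gamma_{\Lambda_N}$, a harmless rescaling to track when matching the two spectral measures.
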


\begin{proof}
Since $n$ is fixed, by splitting the sequence $G_N$ into a finite number of subsequences, we can assume that $\dim(\Lambda_N) = k$ and $\dim(\Delta_N) = l$ are independent of $N$.

Consider the couple of random matrices $Z_N := (\frac{1}{\sqrt{|G_N|}}\widehat{X}^N(\Lambda_N),\frac{1}{\sqrt{|G_N|}}\widehat{X}^N(\Delta_N))$ as a real random vector in $\R^{2(k^2 + l^2)} \simeq \C^{k^2+l^2}$. By Lemma \ref{le:covariances} this vector has uncorrelated components, moreover the variance of components corresponding to $\Lambda_N$, $\Delta_N$ equals respectively $1/(2k)$ and $1/(2l)$. Since the matrices $\Lambda_N(g), \Delta_N(g)$ are unitary, they are bounded independently of $N$ (we work in fixed dimension, so the choice of norm is irrelevant) and thus the Lindeberg condition for the sums $\frac{1}{\sqrt{|G_N|}}\sum_{g \in G} X_g^N (\Lambda_N(g),\Delta_N(g))$ is trivially satisfied (recall that we work under the assumption that the distribution of $X^N_g$ is independent of $N$ and $g$). Thus $Z_N$ converges weakly to the random vector $(\Gamma^{(1)},\Gamma^{(2)})$ where $\Gamma^{(i)}$ are independent complex Ginibre matrices of size $k$ and $l$ respectively. Note that this is also the distribution of $(\Gamma_{\Lambda_N},\Gamma_{\Delta_N})$. Since eigenvalues of a matrix are continuous functions of the entries (see e.g. \cite[Appendix D]{MR832183}), this clearly implies \eqref{eq:CLT-expectations}. To get \eqref{eq:CLT-variances} note that the function
\begin{displaymath}
(A,B) \mapsto \int f dL_A \int f dL_B,
\end{displaymath}
where $A,B$ are square matrices of size $k,l$ respectively, is bounded and continuous on $\R^{2(k^2 + l^2)}$. Thus
\begin{align*}
&\E\int_{\R_+} f d L_{\frac{1}{\sqrt{|G_N|}}\sqrt{\widehat{X}^N(\Lambda_N)(\widehat{X}^N(\Lambda_N))^\ast}} \int_{\R_+} f d L_{\frac{1}{\sqrt{|G_N|}}\sqrt{\widehat{X}^N(\Delta_N)(\widehat{X}^N(\Delta_N))^\ast}} \\
&\to \E \int_{\R_+} f d L_{\sqrt{\Gamma^{(1)} (\Gamma^{(1)})^\ast}}\int_{\R_+} f d L_{\sqrt{\Gamma^{(2)} (\Gamma^{(2)})^\ast}} = \E \int_{\R_+} f d L_{\sqrt{\Gamma^{(1)} (\Gamma^{(1)})^\ast}}\E \int_{\R_+} f d L_{\sqrt{\Gamma^{(2)} (\Gamma^{(2)})^\ast}},
\end{align*}
which in combination with \eqref{eq:CLT-expectations} proves \eqref{eq:CLT-variances}.
\end{proof}

\subsection{High-dimensional representations}

The next lemma will help us deal with the contribution from high dimensional representations.

\begin{lemma}\label{le:moments-universality} Let $G_N$ be a sequence of groups with sizes tending to infinity and let $\xi$ and $X^N$ be as in Theorem \ref{thm:singular-values}. Assume additionally that $\xi$ has finite moments of all orders. Let $\Lambda_N$ be a sequence of irreducible representations of $G_N$ and denote $d_N := \dim \Lambda_N$. Then for any positive integer $k$ we have
\begin{align}\label{eq:moments-difference}
  \E \frac{1}{d_N} \tr \Big(\frac{1}{|G_N|} \widehat{X^N}(\Lambda_N)\widehat{X^N}(\Lambda_N)^\ast\Big)^k - \E \frac{1}{d_N} \tr ( \Gamma_{\Lambda_N}\Gamma_{\Lambda_N}^\ast)^k \stackrel{N\to \infty}{\to} 0,
\end{align}
where $\Gamma_\Lambda$ is as in Proposition \ref{prop:Gaussian-distribution}.

\end{lemma}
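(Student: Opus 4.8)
\textbf{Proof plan for Lemma \ref{le:moments-universality}.}

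The plan is to compute both expected moments by expanding the trace into a sum over closed walks indexed by the matrix entries of $\widehat{X^N}(\Lambda_N)$ (respectively $\sqrt{d_N}\,\Gamma_{\Lambda_N}$), and to show that the two sums agree up to an error tending to $0$. First I would write $\widehat{X^N}(\Lambda_N)_{ij} = \sum_{g\in G_N} X_g^N \Lambda_N(g)_{ij}$ and expand
\begin{displaymath}
\E\frac{1}{d_N}\tr\Bigl(\tfrac{1}{|G_N|}\widehat{X^N}(\Lambda_N)\widehat{X^N}(\Lambda_N)^\ast\Bigr)^k
= \frac{1}{d_N |G_N|^k}\sum \E\Bigl(X_{g_1}^N\overline{X_{h_1}^N}\cdots X_{g_k}^N\overline{X_{h_k}^N}\Bigr)\,\Lambda_N(\cdots),
\end{displaymath}
where the sum runs over all index tuples $i_1,\dots,i_k$, $j_1,\dots,j_k$ (for the rows/columns visited by the walk) and all $g_1,h_1,\dots,g_k,h_k\in G_N$, and the $\Lambda_N$-factor is a product of $2k$ matrix entries of the unitary matrices $\Lambda_N(g_\bullet)$. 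The key observation, exactly as in the classical Wigner/Marchenko--Pastur moment method, is that because $\E \xi = \E\xi^2 = 0$ the expectation $\E(X_{g_1}^N\overline{X_{h_1}^N}\cdots)$ vanishes unless the group elements pair up so that every $X_{g}^N$ is matched with an $\overline{X_{g}^N}$; the dominant contribution comes from pairings that are ``non-crossing'' in the walk sense and in which all $2k$ group elements in a block are actually equal, contributing $\E|\xi|^2=1$ per block, while blocks of size $\ge 3$ or with three or more coincident elements contribute a bounded constant (here we use the finiteness of all moments of $\xi$) times a combinatorial factor, but come with an extra power of $|G_N|^{-1}$ and hence are negligible.

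The second step is to carry out the same expansion for $\sqrt{d_N}\,\Gamma_{\Lambda_N}$, whose entries are i.i.d.\ standard complex Gaussians: by Wick's formula the Gaussian expected moment is a sum over pair partitions of the $k$ ``$X$''-factors with the $k$ ``$\overline X$''-factors, and the leading term is precisely the sum over non-crossing pairings, matching the leading term from the previous paragraph after the group elements are forced to coincide. I would organize both computations so that the ``main term'' is literally the same combinatorial sum
\begin{displaymath}
\frac{1}{d_N}\sum_{\pi}\ \sum_{i_1,\dots,i_k}\ \prod_{\text{blocks of }\pi}\Bigl(\text{contracted product of }\Lambda_N\text{-entries}\Bigr),
\end{displaymath}
and then use unitarity of $\Lambda_N(g)$ — i.e.\ $\sum_g \Lambda_N(g)_{ab}\overline{\Lambda_N(g)_{cd}} = \tfrac{|G_N|}{d_N}\delta_{ac}\delta_{bd}$, which is exactly the orthogonality relation already quoted in the proof of Lemma \ref{le:covariances} — to evaluate the inner sums over the paired group elements; each such contraction produces a factor $|G_N|/d_N$ and a pair of Kronecker deltas on the walk indices, leaving a purely combinatorial sum over index tuples that is identical for the two models. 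The remaining work is bookkeeping: verify that after summing the geometric factors $(|G_N|/d_N)^{\#\text{blocks}}$ against $|G_N|^{-k}$ and the number of free indices, the leading term is an $N$-independent constant (in fact it is $\E\frac{1}{d_N}\tr(\Gamma_{\Lambda_N}\Gamma_{\Lambda_N}^\ast)^k$ itself), and that every non-leading term carries at least one extra factor of $d_N^{-1}$ or $|G_N|^{-1}$ times a bounded combinatorial constant depending only on $k$ and on moments of $\xi$ up to order $2k$.

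The main obstacle I anticipate is controlling the error terms uniformly in $N$ when $d_N$ itself may grow with $N$ (and may be comparable to $\sqrt{|G_N|}$). One has to be careful that the gain of $|G_N|^{-1}$ from a ``degenerate'' pairing block is not cancelled by an extra free summation index contributing a factor $d_N$; this requires checking that each coincidence forced among group elements genuinely removes a power of $|G_N|$ and cannot be compensated, and that the unitarity relations are applied in the right order so that one never has to bound a sum of $\Lambda_N$-entries without an accompanying conjugate. A clean way to handle this is to first truncate and center $\xi$ so that all the needed moment bounds are explicit, then to group the terms by the partition $\pi$ of $\{1,\dots,2k\}$ induced by the coincidences of the group elements, and to show by a counting argument (number of free walk-indices $\le$ number of blocks of the induced non-crossing refinement, with strict inequality whenever $\pi$ has a block of size $\ge 3$) that non-pair-partition contributions are $O_k(|G_N|^{-1})$ and crossing-pair contributions are $O_k(d_N^{-2})\to 0$; the latter uses $d_N\to\infty$ along the relevant subsequences (on subsequences where $d_N$ stays bounded the statement reduces to the finite-dimensional CLT already established in Lemma \ref{le:finite-dimension-CLT}, so we may assume $d_N\to\infty$). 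Once the error estimates are in place, \eqref{eq:moments-difference} follows immediately.
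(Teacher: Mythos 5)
Your core comparison idea (expand the trace over group elements, kill singletons by mean zero and independence, match the pair contributions, bound the rest) is the right one, but your plan takes a detour that creates a genuine gap. By insisting on identifying the leading term through the crossing/non-crossing dichotomy and the Schur orthogonality contraction, you end up needing the estimate that crossing pairings contribute $O_k(d_N^{-2})$, and hence the assumption $d_N\to\infty$. The lemma makes no such assumption (the paper explicitly stresses this in the remark following it, precisely because the lemma is later applied before any dimension dichotomy), and your fallback for subsequences with bounded $d_N$ --- that the statement ``reduces to Lemma \ref{le:finite-dimension-CLT}'' --- does not work as stated: that lemma compares expectations of \emph{bounded continuous} spectral statistics, whereas \eqref{eq:moments-difference} concerns the unbounded test function $x\mapsto x^{k}$ applied to $\frac{1}{|G_N|}\widehat{X^N}(\Lambda_N)\widehat{X^N}(\Lambda_N)^\ast$. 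To salvage that reduction you would need an additional uniform integrability (higher-moment tightness) argument for the entries of $|G_N|^{-1/2}\widehat{X^N}(\Lambda_N)$; it is available because $\xi$ has all moments, but it is missing from your plan, and as written the bounded-$d_N$ case is not covered.

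The detour is avoidable, and this is how the paper argues: realize $\Gamma_{\Lambda_N}=|G_N|^{-1/2}\sum_{g}Y_g\Lambda_N(g)$ with $Y_g$ i.i.d.\ standard complex Gaussian, so that both expected moments have literally the same expansion $\frac{1}{d_N|G_N|^{k}}\sum_{g_1,\dots,g_{2k}}\bigl(\E\prod_{i}X^N_{g_{2i-1}}\overline{X^N_{g_{2i}}}\bigr)\tr\bigl(\prod_i\Lambda_N(g_{2i-1})\Lambda_N(g_{2i})^\ast\bigr)$, differing only in the scalar coefficients. Sequences containing a singleton vanish for both models; sequences in which every element occurs exactly twice have coefficient $0$ or $1$ \emph{identically for both models} (using $\E\xi=\E\xi^2=0$, $\E|\xi|^2=1$ and Wick for the Gaussian), so these terms cancel exactly in the difference --- no crossing analysis, no orthogonality contraction, no condition on $d_N$. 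The remaining degenerate sequences number at most $C_k|G_N|^{k-1}$, their coefficients are bounded via H\"older, and the \emph{unexpanded} trace of a product of unitaries has modulus at most $d_N$, giving a total error $O_k(1/|G_N|)$ on each side. Keeping the trace unexpanded also disposes of the index-counting worry you raise (whether a factor $d_N$ from a free walk index can eat the $|G_N|^{-1}$ gain). Finally, the preliminary truncation you propose is not only unnecessary here (all moments of $\xi$ are assumed finite) but mildly harmful: it perturbs $\E\xi^2=0$ and $\E|\xi|^2=1$, destroying the exact matching of pair coefficients, which would then have to be restored by an extra limiting argument in $M$.
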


\begin{remark}
Let us stress that in the above lemma we do not assume that the dimension $d_N$ tends to infinity with $N$. This assumption will be needed later to show that the moments of the empirical spectral measure are concentrated around their expectations (see Corollary \ref{cor:concentration-for-traces} below).
\end{remark}
\begin{proof}
  Let $Y^N = (Y_g^N)_{g\in G_N}$ be a family of i.i.d. standard complex Gaussian variables.

  By Lemma \ref{le:covariances} we can assume that
  \begin{displaymath}
    \Gamma_{\Lambda_N} = \frac{1}{\sqrt{|G_N|}} \sum_{g \in G} Y^N_g \Lambda_N(g).
  \end{displaymath}
  We also have
  \begin{displaymath}
  \frac{1}{\sqrt{|G_N|}}\widehat{X^N}(\Lambda_N) = \frac{1}{\sqrt{|G_N|}} \sum_{g \in G} X^N_g \Lambda_N(g).
   \end{displaymath}
Therefore, we get
\begin{align*}
  &\E \frac{1}{d_N} \tr \Big(\frac{1}{|G_N|} \widehat{X^N}(\Lambda_N)\widehat{X^N}(\Lambda_N)^\ast\Big)^k \\
  & =
  \frac{1}{d_N|G_N|^k} \sum_{g_1,\ldots,g_{2k}\in G_N} \Big(\E \prod_{i=1}^k X^N_{g_{2i-1}}\overline{X^N_{g_{2i}}}\Big) \tr \Big(\prod_{i=1}^k \Lambda_N(g_{2i-1})\Lambda_N(g_{2i})^\ast\Big).
\end{align*}
and a similar equality for $\E \frac{1}{d_N} \tr (\Gamma_{\Lambda_N}\Gamma_{\Lambda_N}^\ast)^k$.

We can now proceed as in the classical proof of Wigner's theorem, the argument is in fact simpler. Note that if there is some $g \in G$, which appears in the sequence $(g_j)_{j=1}^{2k}$ exactly once, then by the mean zero and independence assumption, the corresponding summand on the right hand side above vanishes (and the same happens for the expansion of  $\E \frac{1}{d_N} \tr (\Gamma_{\Lambda_N}\Gamma_{\Lambda_N}^\ast)^k$).

On the other hand, there are at most $C_k |G_N|^{k-1}$ sequences in which every $g \in G$ appears either zero times or at least $2$ times and there exists $g \in G$, appearing at least $3$ times ($C_k$ is a constant depending only on $k$). Also, since $\xi$ has finite moments of all orders, by H\"older's inequality $|\E \prod_{i=1}^k X^N_{g_{2i-1}}\overline{X^N_{g_{2i}}}| \le D_k$ for some constant depending only on $k$ and the law of $\xi$. Finally, since the product of matrices in each summand above is unitary, the modulus of its trace is at most $d_N$. Altogether this shows that the contribution to each of the expressions on the left hand side of \eqref{eq:moments-difference} from all such sequences is at most
\begin{displaymath}
  \frac{1}{d_N |G_N|^k} C_k |G_N|^{k-1} D_k d_N = \frac{C_k D_k}{|G_N|} \to 0
\end{displaymath}
as $N \to \infty$.

Again, the same argument applies to $\E \frac{1}{d_N} \tr (\Gamma_{\Lambda_N}\Gamma_{\Lambda_N}^\ast)^k$ and thus asymptotically the left hand side of \eqref{eq:moments-difference} equals
\begin{displaymath}
  \frac{1}{d_N|G_N|^k} \sum_{(g_1,\ldots,g_{2k})\in \mathcal{A}_N} \Big(\E \prod_{i=1}^kX^N_{g_{2i-1}}\overline{X^N_{g_{2i}}} - \E  \prod_{i=1}^k Y^N_{g_{2i-1}}\overline{Y^N_{g_{2i}}}\Big) \tr \Big(\prod_{i=1}^k \Lambda^N(g_{2i-1})\Lambda^N(g_{2i})^\ast\Big),
\end{displaymath}
where $\mathcal{A}_N$ is the set of $G_N$-valued sequences of length $2k$ in which every value is taken exactly twice. Note however that for such sequences the expectation $\E \prod_{i=1}^kX^N_{g_{2i-1}}\overline{X^N_{g_{2i}}}$ and $\E  \prod_{i=1}^k Y^N_{g_{2i-1}}\overline{Y^N_{g_{2i}}}$ either both vanish (if there exist $i\neq j$ such that $g_{2i} = g_{2j}$ or $g_{2i-1} = g_{2j-1})$) or are both equal to one (if no such $i,j$ exist).

Thus the asymptotic value of the left hand side of \eqref{eq:moments-difference} is indeed equal to zero.
\end{proof}

To pass from the analysis of expected spectral measure to a statement in probability, we are going to need a concentration of measure result for random matrices. We will use a theorem due to Meckes and Szarek \cite{MR2869165} which we formulate below in a restricted form, sufficient for our purposes. Recall first that a random $d\times d$  matrix $A$ has the convex concentration property with constant $K$ if for every convex function $f\colon M(d) \to \R$, which is $1$-Lipschitz with respect to the Hilbert-Schmidt norm, and every $t > 0$,
\begin{displaymath}
\p(|f(A) - \Med f(A)| \ge t) \le 4\exp(-t^2/K^2),
\end{displaymath}
where $\Med Z$ denotes the median of a random variable $Z$.

A special case of Theorem 1 of \cite{MR2869165} is as follows (please note that to adapt the result to our needs we changed the notation with respect to the original statement, in which $d$ stands for the degree of the polynomial, and the dimension is denoted by $n$).
\begin{theorem}\label{thm:Meckes-Szarek}
For every positive integer $k$, there exist constants $C_k, c_k$ such that for every $d\times d$ random matrix $A$ satisfying the convex concentration property with constant $1$ and every  $t > 0$,
\begin{displaymath}
\p\Big(\Big|\tr \Big(\frac{AA^\ast}{d}\Big)^k - \E \tr \Big(\frac{AA^\ast}{d}\Big)^k\Big| \ge t\Big) \le C_k \exp\Big(-c_k\min(t^2, d t^{1/k})\Big).
\end{displaymath}
\end{theorem}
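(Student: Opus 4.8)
The fastest route is to note that Theorem~\ref{thm:Meckes-Szarek} is merely a specialization of \cite[Theorem~1]{MR2869165} to the self-adjoint noncommutative polynomial $q(a,a^\ast)=(aa^\ast)^k$ of degree $2k$, after the routine bookkeeping of normalizing constants forced by the factor $AA^\ast/d$; in the paper one would simply quote that theorem. For a self-contained account, here is the plan of a direct argument, in two regimes.

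First I would isolate the Gaussian regime $\exp(-c_kt^2)$ using the fact that $A\mapsto\|A\|_{S_{2k}}=(\tr(AA^\ast)^k)^{1/2k}$ is a norm on $M(d)$, hence convex, and is $1$-Lipschitz with respect to the Hilbert--Schmidt norm (since $\|\cdot\|_{S_{2k}}\le\|\cdot\|_{S_2}$). Consequently $\psi(A):=(\tr(AA^\ast/d)^k)^{1/2k}=d^{-1/2}\|A\|_{S_{2k}}$ is convex and $d^{-1/2}$-Lipschitz, so applying the convex concentration property to $\sqrt d\,\psi$ gives $\p(|\psi(A)-\Med\psi(A)|\ge s)\le 4\exp(-ds^2)$. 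One would then transfer this to $W:=\tr(AA^\ast/d)^k=\psi(A)^{2k}$: writing $m:=\Med\psi(A)$, so that $\Med W=m^{2k}$ because $t\mapsto t^{2k}$ is increasing on $[0,\infty)$, the elementary bound $|x^{2k}-y^{2k}|\le 2k\max(x,y)^{2k-1}|x-y|$ turns the concentration of $\psi$ into sub-Gaussian deviations of $W$ about $m^{2k}$; together with a crude comparison of $\E W$ with $m^{2k}$ (both controlled through the same tail estimate on $\psi$) this already produces the bound $C_k\exp(-c_kt^2)$ as long as $t$ does not exceed a fixed multiple of $\sqrt d$.

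The second step would handle large $t$ and produce the heavier tail $\exp(-c_kd\,t^{1/k})$, by localizing to the operator-norm ball $B_R:=\{\|A\|_{\mathrm{op}}\le R\sqrt d\}$. Differentiating $\tr(AA^\ast)^k$ and estimating each of the $2k$ resulting traces via $|\tr(MH)|\le\|M\|_{\mathrm{HS}}\|H\|_{\mathrm{HS}}$ together with $\|M\|_{\mathrm{HS}}\le\sqrt d\,\|M\|_{\mathrm{op}}$ shows that on $B_R$ the map $A\mapsto W$ has Hilbert--Schmidt Lipschitz constant of order $kR^{2k-1}$; on the other hand $A\mapsto\|A\|_{\mathrm{op}}$ is again convex and $1$-Lipschitz, hence has sub-Gaussian tails about its median, so $\p(A\notin B_R)$ decays like $\exp(-cR^2d)$ once $R$ exceeds a fixed multiple of $\Med\|A\|_{\mathrm{op}}/\sqrt d$. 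Balancing the Lipschitz estimate on $B_R$ against the escape probability and optimizing over the radius $R$ then yields the claimed minimum-of-two-exponents bound.

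The genuinely delicate point — and the reason one needs \cite{MR2869165} rather than a two-line argument — is that the convex concentration hypothesis only furnishes concentration for convex functions defined on all of $M(d)$, whereas $A\mapsto W$ is convex but far from globally Lipschitz, and the restriction of a convex function to a ball need not admit a convex Lipschitz extension. Making the localization in the second step compatible with convexity is precisely the technical heart of \cite{MR2869165}, so at that point I would invoke their Theorem~1 rather than reproduce it.
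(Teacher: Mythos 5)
The paper offers no proof of Theorem \ref{thm:Meckes-Szarek} at all: it is quoted verbatim (up to notation) as a special case of Theorem 1 of \cite{MR2869165}, so your primary route --- simply citing that theorem --- is exactly the paper's approach. One caution about your backup sketch, should you ever try to make it self-contained: its first step is genuinely lossy, because with $m=\Med\,\bigl(\tr(AA^\ast/d)^k\bigr)^{1/2k}$ of its typical size $d^{1/2k}$, the Schatten-norm Lipschitz bound only yields a tail of order $\exp\bigl(-c\,d\,t^2/m^{2(2k-1)}\bigr)=\exp\bigl(-c\,t^2 d^{1/k-1}\bigr)$, which fails to give the $t^2$ regime for every $k\ge 2$; recovering it requires the gradient estimate of $A\mapsto\tr(AA^\ast/d)^k$ at points with $\|A\|_{\mathrm{op}}=O(\sqrt d)$, i.e.\ exactly the convexity-compatible localization that, as you yourself note, is the technical heart of \cite{MR2869165}.
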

Let us now note that if $\Lambda$ is an irreducible representation of $G_N$ of dimension $d$ and the variables $X^N_g$ are bounded by $a$ then the random matrix
\begin{displaymath}
\widehat{X^N}(\Lambda) = \sum_{g \in G} X^N_g \Lambda(g)
\end{displaymath}
has the convex concentration property with constant $2 \sqrt{|G_N|a/d}$.

Indeed, if $f \colon M(d) \to \R$ is a convex function and $x = (x_g)_{g\in G_N}$ then $\widetilde{f}( x) = f(\widehat{x}(\Lambda))$ is convex as a function of $x$.

Applying the Plancherel formula \eqref{eq:Plancherel} to $x$ and $y_g = \overline{x_{g^{-1}}}$, and noting that $\widehat{y}(\Delta) = \sum_{g \in G_N} \bar{x}_g \Delta(g^{-1}) = \widehat{x}(\Delta)^\ast$ we obtain
\begin{align}\label{eq:Lipschitz-constant}
\sum_{g \in G_N} |x_g|^2 = \frac{1}{|G_N|} \sum_{\Delta \in \widehat{G_N}} \dim(\Delta) \tr (\widehat{x}(\Delta)\widehat{x}(\Delta)^\ast) \ge \frac{d}{|G_N|} \|\widehat{x}(\Lambda)\|_{HS}^2.
\end{align}
Thus, if the function $f$ is 1-Lipschitz with respect to the Hilbert-Schmidt norm, then the function $\widetilde{f}$ is $\sqrt{|G_N|/d}$-Lipschitz with respect to the standard Euclidean norm on $\C^{G_N}$. Together with independence of $X^N_g$, $g\in G_N$, by the celebrated Talagrand's concentration
inequality for convex functions of independent bounded variables (\cite{MR1361756}, see \cite[Corollary 4]{MR2722794} for a more general version, encompassing the complex case), this gives that $\widehat{X^N}(\Lambda)$ has the convex concentration property with constant $2 a \sqrt{|G_N|/d}$. Thus the matrix
\begin{displaymath}
A := \frac{\sqrt{d}\widehat{X^N}(\Lambda)}{2a \sqrt{|G_N|}}
\end{displaymath}
has the convex concentration property with constant 1.
We have
\begin{align*}
&\p\Big( \Big|\frac{1}{d} \tr \Big(\frac{\widehat{X^N}(\Lambda)\widehat{X^N}(\Lambda)^\ast}{|G_N|}\Big)^k - \E \frac{1}{d} \tr \Big(\frac{\widehat{X^N}(\Lambda)\widehat{X^N}(\Lambda)}{|G_N|}\Big)^k \Big| \ge t\Big)\\
&=\p\Big( \Big|\tr \Big(\frac{AA^\ast}{d}\Big)^k - \E \tr \Big(\frac{AA^\ast}{d}\Big)^k| \ge \frac{td}{(2a)^{2k}}\Big)
\end{align*}
and thus, by  Theorem \ref{thm:Meckes-Szarek} we obtain
\begin{corollary}\label{cor:concentration-for-traces}
If the variables $X^N_g$ are bounded by $a$, and $\Lambda$ is an irreducible representation of $G_N$ of dimension $d$, then for every positive integer $k$ and every $t> 0$,
\begin{align*}
&\p\Big( \Big|\frac{1}{d} \tr \Big(\frac{\widehat{X^N}(\Delta)\widehat{X^N}^\ast}{|G_N|}\Big)^k - \E \frac{1}{d} \tr \Big(\frac{\widehat{X^N}(\Delta)\widehat{X^N}}{|G_N|}\Big)^k \Big| \ge t\Big) \\
& \le C_k\exp\Big(-c_k\min\Big(\frac{d^2 t^2 }{(2a)^{4k}}, \frac{d^{1+1/k}t^{1/k}}{(2a)^2}\Big)\Big).
\end{align*}
\end{corollary}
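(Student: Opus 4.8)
The plan is to obtain Corollary \ref{cor:concentration-for-traces} as an immediate consequence of Theorem \ref{thm:Meckes-Szarek} applied to the rescaled matrix $A := \frac{\sqrt d\,\widehat{X^N}(\Lambda)}{2a\sqrt{|G_N|}}$, once we know that $A$ enjoys the convex concentration property with constant $1$. The first step is therefore to verify this property. Since $x=(x_g)_{g\in G_N}\mapsto\widehat x(\Lambda)$ is linear, for any convex $f\colon M(d)\to\R$ the composition $\widetilde f(x)=f(\widehat x(\Lambda))$ is convex on $\C^{G_N}\cong\R^{2|G_N|}$. Applying the Plancherel identity \eqref{eq:Plancherel} to $x$ and $y_g=\overline{x_{g^{-1}}}$ --- so that $\widehat y(\Delta)=\widehat x(\Delta)^\ast$ --- and keeping only the summand indexed by $\Lambda$ in the (nonnegative) sum gives \eqref{eq:Lipschitz-constant}, i.e. $\|x\|_2\ge\sqrt{d/|G_N|}\,\|\widehat x(\Lambda)\|_{HS}$; hence $\widetilde f$ is $\sqrt{|G_N|/d}$-Lipschitz with respect to the Euclidean norm whenever $f$ is $1$-Lipschitz with respect to the Hilbert--Schmidt norm. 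As the coordinates $X^N_g$ are independent and bounded by $a$, Talagrand's convex concentration inequality (in the complex formulation of \cite[Corollary 4]{MR2722794}) then gives the convex concentration property for $\widehat{X^N}(\Lambda)$ with constant $2a\sqrt{|G_N|/d}$, and rescaling by $\sqrt d/(2a\sqrt{|G_N|})$ transfers it to $A$ with constant $1$.

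The second step is the bookkeeping. From $\frac{AA^\ast}{d}=\frac{1}{(2a)^2|G_N|}\,\widehat{X^N}(\Lambda)\widehat{X^N}(\Lambda)^\ast$ one reads off
\[
\frac1d\tr\Big(\frac{\widehat{X^N}(\Lambda)\widehat{X^N}(\Lambda)^\ast}{|G_N|}\Big)^k=\frac{(2a)^{2k}}{d}\,\tr\Big(\frac{AA^\ast}{d}\Big)^k ,
\]
so the event whose probability we must bound coincides with $\{\,|\tr(AA^\ast/d)^k-\E\tr(AA^\ast/d)^k|\ge td/(2a)^{2k}\,\}$. Feeding the threshold $td/(2a)^{2k}$ into Theorem \ref{thm:Meckes-Szarek} and simplifying
\[
\min\Big(\big(td/(2a)^{2k}\big)^2,\ d\,\big(td/(2a)^{2k}\big)^{1/k}\Big)=\min\Big(\frac{d^2t^2}{(2a)^{4k}},\ \frac{d^{1+1/k}t^{1/k}}{(2a)^2}\Big)
\]
yields precisely the stated inequality.

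Since every substantive ingredient --- the Plancherel identity, inequality \eqref{eq:Lipschitz-constant}, Talagrand's inequality and Theorem \ref{thm:Meckes-Szarek} --- is already available, there is no genuine obstacle here; the one point that requires attention is the careful tracking of the Lipschitz and concentration constants through the Plancherel identity and the rescaling, together with invoking Talagrand's inequality in a version valid over the $2|G_N|$ real coordinates of $\C^{G_N}$ rather than over a single real parameter.
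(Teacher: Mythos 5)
Your proposal is correct and is essentially identical to the paper's argument: you establish the convex concentration property of $\widehat{X^N}(\Lambda)$ with constant $2a\sqrt{|G_N|/d}$ via the Plancherel bound \eqref{eq:Lipschitz-constant} and Talagrand's inequality, rescale to $A=\sqrt{d}\,\widehat{X^N}(\Lambda)/(2a\sqrt{|G_N|})$, and apply Theorem \ref{thm:Meckes-Szarek} at threshold $td/(2a)^{2k}$, exactly as the paper does. The constant tracking and the final simplification of the minimum are also the same.
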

Combining this with Lemma \ref{le:moments-universality}, the well-known fact that for all positive integers $k$,
\begin{displaymath}
\frac{1}{n} \E \tr (\Gamma_n \Gamma_n^\ast)^k \stackrel{n \to \infty}{\to} \int_0^\infty x^{2k} d\rho_\infty(x)
\end{displaymath}
(see e.g. \cite[Chapter 3.2]{MR2567175}) and Lemma \ref{le:convergence-moments} from the Appendix, we obtain

\begin{prop}\label{prop:bounded-matrices-high-dimension} Let $G_N, \xi$ and $X^N$ be as in Theorem \ref{thm:singular-values}. Assume additionally that $\xi$ is bounded. Let $\Lambda_N$ be a sequence of irreducible representations of $G_N$ with $d_N := \dim \Lambda_N \to \infty$ as $N \to \infty$. Then for any positive integer $k$ we have
\begin{displaymath}
\frac{1}{d_N} \tr \Big(\frac{1}{|G_N|} \widehat{X^N}(\Lambda_N)\widehat{X^N}(\Lambda_N)^\ast\Big)^k \to  \int_0^\infty x^{2k} d\rho_\infty(x)
\end{displaymath}
in probability as $N \to \infty$.
In particular $L_{\frac{1}{\sqrt{|G_N|}}\sqrt{\widehat{X^N}(\Lambda_N)\widehat{X^N}(\Lambda_N)^\ast}}$ converges weakly in probability to $\rho_\infty$.
\end{prop}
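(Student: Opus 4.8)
The plan is to combine the three ingredients already assembled in this section: the moment-universality Lemma \ref{le:moments-universality}, the concentration bound of Corollary \ref{cor:concentration-for-traces}, and the classical fact that the moments $\frac{1}{n}\E\tr(\Gamma_n\Gamma_n^\ast)^k$ converge to the moments $\int_0^\infty x^{2k}\,d\rho_\infty(x)$ of the Marchenko--Pastur (quarter circle) law. Write $B_N := \frac{1}{|G_N|}\widehat{X^N}(\Lambda_N)\widehat{X^N}(\Lambda_N)^\ast$ and fix $k$. The first step is to observe that
\begin{displaymath}
\E \frac{1}{d_N}\tr B_N^k \to \int_0^\infty x^{2k}\,d\rho_\infty(x),
\end{displaymath}
which follows immediately from Lemma \ref{le:moments-universality} (the difference between $\E\frac{1}{d_N}\tr B_N^k$ and $\E\frac{1}{d_N}\tr(\Gamma_{\Lambda_N}\Gamma_{\Lambda_N}^\ast)^k$ tends to zero) together with the cited convergence of Ginibre moments; note that the latter uses $d_N\to\infty$, and that boundedness of $\xi$ guarantees all moments are finite so the hypothesis of Lemma \ref{le:moments-universality} is met.

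The second step is to upgrade this to convergence in probability of $\frac{1}{d_N}\tr B_N^k$ itself. Here I would invoke Corollary \ref{cor:concentration-for-traces}: since $\xi$ is bounded, say by $a$, for every $t>0$
\begin{displaymath}
\p\Big(\Big|\frac{1}{d_N}\tr B_N^k - \E\frac{1}{d_N}\tr B_N^k\Big|\ge t\Big) \le C_k\exp\Big(-c_k\min\Big(\frac{d_N^2 t^2}{(2a)^{4k}},\frac{d_N^{1+1/k}t^{1/k}}{(2a)^2}\Big)\Big),
\end{displaymath}
and because $d_N\to\infty$ the right-hand side tends to $0$ for each fixed $t$. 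Combining with the first step and the triangle inequality gives $\frac{1}{d_N}\tr B_N^k \to \int_0^\infty x^{2k}\,d\rho_\infty(x)$ in probability, for every positive integer $k$. This is exactly the convergence of all even moments of the symmetrized spectral measure (equivalently, all moments of $L_{B_N}=L_{(\frac{1}{\sqrt{|G_N|}}\sqrt{\widehat{X^N}(\Lambda_N)\widehat{X^N}(\Lambda_N)^\ast})^2}$) to the corresponding moments of $\rho_\infty$.

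The third step is to deduce weak convergence in probability of $L_{\frac{1}{\sqrt{|G_N|}}\sqrt{\widehat{X^N}(\Lambda_N)\widehat{X^N}(\Lambda_N)^\ast}}$ to $\rho_\infty$ from the convergence of moments in probability. This is the role of Lemma \ref{le:convergence-moments} from the Appendix, which is precisely the ``moment method in probability'': since $\rho_\infty$ is compactly supported, hence determined by its moments (and its moments satisfy the Carleman-type growth bound), convergence in probability of each moment of a sequence of random probability measures implies weak convergence in probability of the measures to $\rho_\infty$. Strictly speaking the odd moments of the symmetrized measure vanish for both $L_{B_N}^{1/2}$ (by symmetry of the singular value spectrum under $x\mapsto -x$) and $\rho_\infty$, so only the even moments, computed above, matter; I would spell this out by passing to the symmetrization or simply noting $\int x^{2k}\,dL_{\sqrt{\cdot}} = \frac{1}{d_N}\tr B_N^k$ and invoking the lemma for the variable $x^2$.

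I do not expect any genuine obstacle here: every ingredient is already in place, and the argument is a routine assembly. The only point requiring a little care is bookkeeping around the two hypotheses of Lemma \ref{le:moments-universality} and Corollary \ref{cor:concentration-for-traces} — the former needs finite moments of all orders (supplied by boundedness) and does \emph{not} need $d_N\to\infty$, while the passage to a probabilistic statement and the identification of the limit \emph{do} need $d_N\to\infty$ (for the concentration rate and for the Ginibre-moment asymptotics respectively). I would state this explicitly so the reader sees where each assumption enters.
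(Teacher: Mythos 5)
Your proposal is correct and follows essentially the same route as the paper: the paper's own proof is precisely the assembly of Lemma \ref{le:moments-universality} (expected-moment universality), the classical convergence of Ginibre moments to those of $\rho_\infty$, Corollary \ref{cor:concentration-for-traces} (concentration, using boundedness of $\xi$ and $d_N\to\infty$), and Lemma \ref{le:convergence-moments} to pass from moment convergence in probability to weak convergence in probability. Your remarks about where each hypothesis enters and about working with the squared singular values before taking the square root are exactly the bookkeeping the paper leaves implicit.
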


Our next goal is to remove the boundedness assumption on $\xi$. This can be done by a truncation and recentering procedure as for classical Wigner random matrices.

Consider arbitrary $M > 0$ and the random variable $\xi' = \xi \ind{|\xi| \le M}$ seen as a random vector in $\R^2$ and let $C_M$ be its covariance matrix. Note that since $Cov(\xi) = \frac{1}{2}\id_2$ (where $\id_2$ is the $2\times 2$ identity matrix), we have
\begin{align}\label{eq:covariance-proximity}
\lim_{M \to \infty} C_M = \frac{1}{2}\id_2,
\end{align}
in particular for sufficiently large $M$, the matrix $C_M$ is non-singular.

Let now $\xi'' = \frac{1}{\sqrt{2}}C_M^{-1/2} (\xi' - \E \xi')$ so that $\xi''$ as a complex random variable satisfies $\E \xi'' = \E (\xi'')^2 = 0$, $\E |\xi''|^2 = 1$.
Let moreover for $N \ge 1$ and $g \in G_N$, $Y^{N,M}_g$ be i.i.d. copies of $\xi''$.


By Proposition \ref{prop:bounded-matrices-high-dimension} we get that $L_{\frac{1}{\sqrt{|G_N|}} \sqrt{\widehat{Y^{N,M}}(\Lambda_N)\widehat{Y^{N,M}}(\Lambda_N)^\ast}}$ converges weakly in probability to $\rho_\infty$. In particular for every sufficiently large $M$,
\begin{align}\label{eq:bounded-convergence}
\D\Big(L_{\frac{1}{\sqrt{|G_N|}} \sqrt{\widehat{Y^{N,M}}(\Lambda_N)\widehat{Y^{N,M}}(\Lambda_N)^\ast}}, \rho_\infty\Big) \to 0
\end{align}
in probability as $N \to \infty$, where $\D$ is the L\'{e}vy distance between probability measures, i.e.
\begin{displaymath}
  \D(\mu,\nu) = \inf\{\varepsilon > 0 \colon \forall_{x \in \R}\; \mu((-\infty,x-\varepsilon))-\varepsilon \le \nu((-\infty,x)) \le \mu((-\infty,x+\varepsilon))+\varepsilon\}.
\end{displaymath}

 By Theorems A.37 and A.38 from \cite{MR2567175}
 for any two square matrices $A$, $B$ of dimension $d_N$,
\begin{align}\label{eq:HW}
\D(L_{\sqrt{AA^\ast}},L_{\sqrt{BB^\ast}})^3 \le \frac{1}{d_N} \|A - B\|_{HS}^2.
\end{align}
Using the equality $\E X^N_g = 0$, we can write
\begin{align*}
& \E \frac{1}{d_N}\Big\| \frac{1}{\sqrt{|G_N|}} \widehat{X^N}(\Lambda) - \frac{1}{\sqrt{|G_N|}}  \widehat{Y^{N,M}}(\Lambda)\Big\|_{HS}^2 \\
=& \frac{1}{d_N|G_N|}\E \Big\| \sum_{g \in G_N} (X^N_g - Y^{N,M}_g)\Lambda(g)\Big\|_{HS}^2 \\
=&  \frac{1}{d_N|G_N|} \E \Big\|\sum_{g\in G_N}(X^N_g\ind{|X^N_g| \le M} - \E X^N_g\ind{|X^N_g| \le M} - Y^{N,M}_g)\Lambda(g) \\
&\phantom{aaaaaaaaaa}+ \sum_{g \in G_N} (X^N_g\ind{|X^N_g| > M} - \E X^N_g\ind{|X^N_g| > M})\Lambda(g)\Big\|_{HS}^2\\
\le &  \frac{2}{d_N|G_N|} \Big(\E \Big\|\sum_{g\in G_N}(X^N_g\ind{|X^N_g| \le M} - \E X^N_g\ind{|X^N_g| \le M} - Y^{N,M}_g)\Lambda(g)\Big\|_{HS}^2 \\
&\phantom{aaaaadddd}+ \Big\|\sum_{g \in G_N} (X^N_g\ind{|X^N_g| > M} - \E X^N_g\ind{|X^N_g| > M})\Lambda(g)\Big\|_{HS}^2 \Big)\\
=&  \frac{2}{d_N|G_N|} \Big(\sum_{g\in G_N} \E|(X^N_g\ind{|X^N_g| \le M} - \E X^N_g\ind{|X^N_g| \le M} - Y^{N,M}_g)|^2 \|\Lambda(g)\|_{HS}^2 \\
&\phantom{aaaaadddd}+ \sum_{g\in G_N} \E |X^N_g\ind{|X^N_g| > M} - \E X^N_g\ind{|X^N_g| > M}|^2 \|\Lambda(g)\|_{HS}^2\Big) \\
 = &2 \E |(\xi' -\E \xi') - \xi''|^2 + 2 \E |\xi\ind{|\xi| > M} - \E \xi \ind{|\xi| > M}|^2\\
 \le&  2\E | (\id_2 - (2C_M)^{-1/2})(\xi'-\E \xi')|^2 + 2\E |\xi|^2\ind{|\xi| > M},
\end{align*}
where in the first inequality we used the estimate $(a+b)^2 \le 2 (a^2 + b^2)$ and the triangle inequality, in the third equality the parallelogram  identity for the Hilbert-Schmidt norm and in the last equality the fact that $\Lambda(g)$ are unitary.
Note that by \eqref{eq:covariance-proximity}, for any $\varepsilon > 0$ and $M$ large enough, for all $z \in \C\simeq \R^2$ we have
\begin{displaymath}
|(\id_2 - (2C_M)^{-1/2}) z|^2 \le \varepsilon |z|^2,
\end{displaymath}
hence the first summand on the right hand side above is bounded by $2\varepsilon \E |\xi'|^2 \le 2\varepsilon$. The second summand for sufficiently large $M$ does not exceed $2\varepsilon$. Thus
\begin{displaymath}
  \lim_{M \to \infty} \sup_{N \ge 1}\E \frac{1}{d_N}\Big\| \frac{1}{\sqrt{|G_N|}} \widehat{X^N}(\Lambda) - \frac{1}{\sqrt{|G_N|}}  \widehat{Y^{N,M}}(\Lambda)\Big\|_{HS}^2 = 0,
\end{displaymath}
which combined with \eqref{eq:HW} and Markov's inequality gives for any $\varepsilon > 0$,
\begin{align*}
& \limsup_{M \to \infty} \sup_{N \ge 1} \p\Big(\D\Big(L_{\frac{1}{\sqrt{|G_N|}} \sqrt{\widehat{X^N}(\Lambda)\widehat{X^N}(\Lambda)^\ast}},L_{\frac{1}{\sqrt{|G_N|}} \sqrt{\widehat{Y^{N,M}}(\Lambda)\widehat{Y^{N,M}}(\Lambda)^\ast}}\Big) \ge \varepsilon\Big) \\
&\le \varepsilon^{-3} \lim_{M \to \infty}\sup_{N \ge 1}  \E \frac{1}{d_N}\Big\| \frac{1}{\sqrt{|G_N|}} \widehat{X^N}(\Lambda) - \frac{1}{\sqrt{|G_N|}}  \widehat{Y^{N,M}}(\Lambda)\Big\|_{HS}^2 = 0.
\end{align*}

Combining this with \eqref{eq:bounded-convergence} we get  the following strengthening of Proposition \ref{prop:bounded-matrices-high-dimension}.

\begin{prop}\label{prop:unbounded-matrices-high-dimension}
Let $G_N, \xi$ and $X^N$ be as in Theorem \ref{thm:singular-values} and let $\Lambda_N$ be a sequence of irreducible representations of $G_N$ with $d_N := \dim \Lambda_N \to \infty$ as $N \to \infty$. Then $L_{\frac{1}{\sqrt{|G_N|}}\sqrt{\widehat{X^N}(\Lambda_N)\widehat{X^N}(\Lambda_N)^\ast}}$ converges weakly in probability to $\rho_\infty$.
\end{prop}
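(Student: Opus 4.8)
The plan is to deduce the unbounded case from the bounded case (Proposition \ref{prop:bounded-matrices-high-dimension}) together with the truncation estimate established in the discussion immediately preceding the statement, via a standard $\varepsilon$--$\delta$ argument in which the order of the limits in $N$ and $M$ is what matters. Throughout I abbreviate
\[
L_N := L_{\frac{1}{\sqrt{|G_N|}}\sqrt{\widehat{X^N}(\Lambda_N)\widehat{X^N}(\Lambda_N)^\ast}}, \qquad L_N^{(M)} := L_{\frac{1}{\sqrt{|G_N|}}\sqrt{\widehat{Y^{N,M}}(\Lambda_N)\widehat{Y^{N,M}}(\Lambda_N)^\ast}}.
\]

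First I would fix $\varepsilon > 0$. The essential feature of the bound obtained above from the L\'evy-distance inequality \eqref{eq:HW} and Markov's inequality is that the control of $\D(L_N,L_N^{(M)})$ is \emph{uniform in $N$}, namely $\limsup_{M\to\infty}\sup_{N\ge 1}\p(\D(L_N,L_N^{(M)})\ge\varepsilon)=0$. Exploiting this, I would fix once and for all a value of $M$, large enough that $C_M$ is invertible (so that $\xi''$, and hence $Y^{N,M}$, is well-defined and bounded) and large enough that $\p(\D(L_N,L_N^{(M)})\ge\varepsilon)<\varepsilon$ for every $N\ge 1$.

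With this $M$ now frozen, the underlying variable $\xi''$ is bounded and, by hypothesis, $d_N=\dim\Lambda_N\to\infty$; hence Proposition \ref{prop:bounded-matrices-high-dimension} applies to the family $Y^{N,M}$ and gives that $L_N^{(M)}$ converges weakly in probability to $\rho_\infty$, equivalently $\D(L_N^{(M)},\rho_\infty)\to 0$ in probability (using that weak convergence of probability measures on $\R$ is metrized by $\D$). In particular there is $N_0$ such that $\p(\D(L_N^{(M)},\rho_\infty)\ge\varepsilon)<\varepsilon$ for all $N\ge N_0$. Combining the two estimates through the triangle inequality for $\D$, for $N\ge N_0$ one gets
\[
\p\bigl(\D(L_N,\rho_\infty)\ge 2\varepsilon\bigr)\le \p\bigl(\D(L_N,L_N^{(M)})\ge\varepsilon\bigr)+\p\bigl(\D(L_N^{(M)},\rho_\infty)\ge\varepsilon\bigr)<2\varepsilon,
\]
and since $\varepsilon>0$ is arbitrary this is exactly the claimed weak convergence in probability.

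The main (and really the only) subtlety is the interchange of limits: $M$ must be chosen \emph{before} letting $N\to\infty$, which is precisely why the preceding computation was carried out with a supremum over $N$ rather than for each fixed $N$ separately. All the genuinely analytic work --- the truncation and recentering of $\xi$, the Hoffman--Wielandt/L\'evy-distance bound, and the moment-method plus concentration input underlying the bounded-entry Proposition \ref{prop:bounded-matrices-high-dimension} --- has already been carried out, so no further obstacle remains.
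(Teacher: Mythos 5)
Your argument is correct and follows essentially the same route as the paper: the paper likewise establishes the truncation bound uniformly in $N$ (with $\limsup_{M\to\infty}\sup_{N\ge 1}$), invokes Proposition \ref{prop:bounded-matrices-high-dimension} for the bounded recentered variables $Y^{N,M}$, and combines the two via the L\'evy distance, merely leaving the final $\varepsilon$--triangle-inequality step implicit where you spell it out. Nothing further is needed.
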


\subsection{Proof of Theorem \ref{thm:singular-values}} We are now ready to conclude the proof of Theorem \ref{thm:singular-values}, by combining the asymptotic behaviour for small-dimensional representations, given in Lemma \ref{le:finite-dimension-CLT} with the asymptotics for high-dimensional representations governed by Proposition \ref{prop:unbounded-matrices-high-dimension}.

\begin{proof}[Conclusion of the proof of Theorem \ref{thm:singular-values}]
Let us fix a bounded continuous function $f \colon \R \to \R$ and $\varepsilon > 0$. There exist positive integers $n_0,N_0$ such that for any $n > n_0$ and $N > N_0$, if $|G_N|> N_0$ and $\Lambda$ is an irreducible representation of $G_N$ with dimension $n$, then
\begin{align}\label{eq:high-dimensional-contribution}
  \Big\| \int_\R f d L_{\frac{1}{\sqrt{|G_N|}} \sqrt{\widehat{X^N}(\Lambda)\widehat{X^N}(\Lambda)^\ast}} - \int_\R f d\rho_\infty\Big\|_2 < \varepsilon.
\end{align}
Indeed, if this was not the case, we could find a sequence of groups $G_N$ and their irreducible representations $\Lambda_N$ with $|G_N|,
\dim \Lambda_N \to \infty$ such that the left hand side of the above inequality remains separated from zero. However in view of Lebesgue's theorem on dominated convergence this would contradict Proposition \ref{prop:unbounded-matrices-high-dimension}.
Thanks to the Marchenko-Pastur theorem, by increasing $n_0$ if necessary we can also assume that  for $n > n_0$,
        \begin{align}\label{eq:error-at-infinity-sing}
            \Big|\int_{\R} f d\rho_n - \int_\R f d\rho_\infty\Big| < \varepsilon.
        \end{align}

By Corollary \ref{cor:deterministic-spectrum-explained} we can thus write
\begin{align*}
  \mathcal{E}_N := &\Big \|\int_\R f d L_{\frac{1}{\sqrt{|G_N|}} \sqrt{P_{X_N} P_{X_N}^\ast}} - \int_\R f dL^s_\infty\Big\|_2 \\
   = & \Big\|\sum_{\Lambda \in \hat{G}_N} \mu_{G_N}(\Lambda)\int_{\R} f dL_{\frac{1}{\sqrt{|G_N|}} \sqrt{\widehat{X_N}(\Lambda)\widehat{X_N}(\Lambda)^\ast}}  - \sum_{n \in \Z_+} \mu(n) \int_\R f d \rho_n\Big\|_2 \\
\le & \sum_{k=1}^5 \mathcal{E}_{k,N},
\end{align*}
where
\begin{align*}
  \mathcal{E}_{1,N}& = \Big\|\sum_{\stackrel{\Lambda \in \widehat{G}_N}{\dim \Lambda \le n_0}} \mu_{G_N}(\Lambda)
  \Big(\int_\R f d L_{\frac{1}{\sqrt{|G_N|}} \sqrt{\widehat{X_N}(\Lambda)\widehat{X_N}(\Lambda)^\ast}} - \int_\R fd\rho_{\dim \Lambda}\Big)\Big\|_2,\\
  \mathcal{E}_{2,N} & =  \Big|\sum_{n=1}^{n_0}(\widetilde{\mu}_{G_N}(n) - \mu(n))\int_\R fd\rho_n\Big| ,\\
  \mathcal{E}_{3,N} &= \Big\|\sum_{\stackrel{\Lambda \in \widehat{G}}{\dim(\Lambda) > n_0}}  \mu_{G_N}(\Lambda)
  \Big(\int_\R f dL_{\frac{1}{\sqrt{|G_N|}} \sqrt{\widehat{X_N}(\Lambda)\widehat{X_N}(\Lambda)^\ast}} -  \int_\R f d\rho_\infty\Big)\Big\|_2,\\
  \mathcal{E}_{4,N} &= \Big|(\widetilde{\mu}_{G_N}([n_0+1,\infty]) - \mu([n_0+1,\infty])) \int_\R f d\rho_\infty \Big|,\\
  \mathcal{E}_{5,N} & = \Big| \mu([n_0+1,\infty])\int_\R f d\rho_\infty - \sum_{n_0 < n \le \infty} \mu(n)\int_\R fd\rho_n\Big|.
\end{align*}

As in the proof of Theorem \ref{thm:eigenvalues}, $\mathcal{E}_{2,N}$ and $\mathcal{E}_{4,N}$ converge to zero since $\widetilde{\mu}_{G_N}$ converges weakly to $\mu$ and the sets $\{n\}$, $n \le n_0$ and $[n_0+1,\infty]$ have empty boundary in $\overline{\Z}_+$. By \eqref{eq:error-at-infinity-sing} the term $\mathcal{E}_{5,N}$ does not exceed $\varepsilon$.  Similarly for large $N$ the term $\mathcal{E}_{3,N}$ is smaller than $\varepsilon$ by \eqref{eq:high-dimensional-contribution}.

It thus remains to estimate the term $\mathcal{E}_{1,N}$, which can be easily done by means of Lemma \ref{le:finite-dimension-CLT}. Indeed, we have the equality
\begin{align}\label{eq:expectation-equality}
  \int_{\R} f d\rho_{\dim \Lambda}  = \E \int_{\R} f d L_{\sqrt{\Gamma_{\Lambda}\Gamma_{\Lambda}^\ast}},
\end{align}
moreover, by Lemma \ref{le:finite-dimension-CLT}, we have
\begin{align}\label{eq:small-dimensional-expectation}
  \lim_{N\to \infty} \max_{\stackrel{\Lambda \in \widehat{G}_N}{\dim \Lambda \le n_0}}  \Big|\E \int_{\R_+} f d L_{\frac{1}{\sqrt{|G_N|}}\sqrt{\widehat{X}^N(\Lambda) \widehat{X}^N(\Lambda)^\ast}} - \E \int_{\R_+} f d L_{\sqrt{\Gamma_{\Lambda} \Gamma_{\Lambda}^\ast}}\Big| = 0
\end{align}
and
\begin{align}\label{eq:small-dimensional-covariance}
  \lim_{N \to \infty}   \max_{\stackrel{\Lambda\neq \Delta \in \widehat{G}_N}{\dim \Lambda, \dim \Delta \le n_0}} \Cov\Big(\int_{\R_+} f d L_{\frac{1}{\sqrt{|G_N|}}\sqrt{\widehat{X}^N(\Lambda)\widehat{X}^N(\Lambda)^\ast}},\int_{\R_+} f d L_{\frac{1}{\sqrt{|G_N|}}\sqrt{\widehat{X}^N(\Delta)(\widehat{X}^N(\Delta))^\ast}}\Big) \to 0.
\end{align}
By \eqref{eq:expectation-equality} and \eqref{eq:small-dimensional-expectation} to show that $\mathcal{E}_{1,N} \to 0$ it is enough to prove that
\begin{displaymath}
  \Var\Big(\sum_{\stackrel{\Lambda \in \widehat{G}_N}{\dim \Lambda \le n_0}} \mu_{G_N}(\Lambda)
  \int_\R f d L_{\frac{1}{\sqrt{|G_N|}} \sqrt{\widehat{X_N}(\Lambda)\widehat{X_N}(\Lambda)^\ast}}\Big) \to 0.
\end{displaymath}
Expanding the variance into sum of covariances and using \eqref{eq:small-dimensional-covariance}, we can see that for any $\delta > 0$ if $N$ is large enough, the variance is dominated by
\begin{align*}
 &\sum_{\stackrel{\Lambda \in \widehat{G}_N}{\dim \Lambda \le n_0}} \mu_{G_N}(\Lambda)^2 \Var\Big(\int_\R f d L_{\frac{1}{\sqrt{|G_N|}} \sqrt{\widehat{X_N}(\Lambda)\widehat{X_N}(\Lambda)^\ast}}\Big) + \sum_{\stackrel{\Lambda\neq \Delta \in \widehat{G}_N}{\dim \Lambda, \dim \Delta \le n_0}} \mu_{G_N}(\Lambda)\mu_{G_N}(\Delta)\delta\\
 & \le \|f\|_\infty \sum_{\stackrel{\Lambda \in \widehat{G}_N}{\dim \Lambda \le n_0}} \mu_{G_N}(\Lambda)^2 + \delta,
\end{align*}
where we again used that $\mu_{G_N}$ is a probability measure.
Now
\begin{displaymath}
\sum_{\stackrel{\Lambda \in \widehat{G}_N}{\dim \Lambda \le n_0}} \mu_{G_N}(\Lambda)^2 \le \frac{n_0^2}{|G_N|}\sum_{\stackrel{\Lambda \in \widehat{G}_N}{\dim \Lambda \le n_0}} \mu_{G_N}(\Lambda) \le \frac{n_0^2}{|G_N|} \to 0
\end{displaymath}
as $N \to \infty$, which shows that $\mathcal{E}_{1,N}$ vanishes in the limit, proving the theorem.

\end{proof}

\subsection{Proof of Theorem \ref{thm:singular-values-uniform version}}

Assume that a number $N$ as in the statement of the theorem does not exist. Thus there exists a sequence of groups $G_N$ such that $|G_N|\to \infty$ and for all $N$,
\begin{displaymath}
\p(d(L^s_{G_N},\mathcal{L}_{G_N}) > \varepsilon) > \delta.
\end{displaymath}
We can also assume that $\widetilde{\mu}_{G_N}$ converges to some measure $\mu$ on $\overline{\Z}_+$ (otherwise we may pass to a subsequence).
Thus $L^s_{G_N}$ converges weakly in probability to $L^s_\infty$ -- the measure given by Theorem \ref{thm:singular-values}, which implies that for large $N$, $d(\mathcal{L}_{G_N}, L^s_\infty) > \varepsilon/2$. Using the weak convergence of $\rho_n$ to $\rho_\infty$ it is however easy to prove that $\mathcal{L}_{G_N}$ converges weakly to $L^s_
\infty$, which yields a contradiction ending the proof of the theorem.

\section{Asymptotic freeness}\label{sec:freeness}

We will now restrict our attention to the case when the limiting measure $\mu$ is the Dirac mass at $\infty$ and as a consequence the limiting spectral measure is the circular law. We will consider several independent sequences $X^{N,1}= (X^{N,1}_g)_{g \in G_N},\ldots,X^{N,m} = (X^{N,m}_g)_{g \in G_N}$ of independent random variables and we will consider convergence of the family of matrices $(P_{X^{N,1}},\ldots,P_{X^{N,m}})$ in $\ast$-moments and its asymptotic freeness.

To formulate our results we need to recall some basic notions of Voiculescu's free probability theory. We refer to \cite{MR1746976,MR3585560,MR2760897} for the details on non-commutative probability and its importance in the context of the study of von Neumann algebras and random matrices, here we only briefly introduce the minimum set of ideas necessary for our purposes.

First, a $\C^\ast$-probability space is a couple $(\mathcal{A},\varphi)$, where $\mathcal{A}$ is a unital $C^\ast$-algebra and $\mathcal{\varphi}$ is a state, i.e. a linear functional such that $\varphi(\I) = 1$, where $\I$ is the unit of $\mathcal{A}$, $\varphi(a^\ast) = \overline{\varphi(a)}$ and $\varphi(aa^\ast) \ge 0$ for any $a \in \mathcal{A}$.

For a family $\{a_1,\ldots,a_m \in \mathcal{A}\}$ we define its distribution as the functional $\mu = \mu_{a_1,\ldots,a_m} \colon \C\langle x_1,\ldots,x_m\rangle \to \C$, where $\C\langle x_1,\ldots,x_m\rangle$ is the algebra of non-commutative polynomials in variables $x_i$, given by the formula
\begin{displaymath}
  \mu(Q) = \varphi(Q(a_1,\ldots,a_m))
\end{displaymath}
for $Q \in \C\langle x_1,\ldots,x_m\rangle$.

An element $a \in \mathcal{A}$ is called semicircular if $\mu(a^n) = 0$ for $n$ odd and equals the $m$-th Catalan number $\frac{1}{m+1}\binom{2m}{m}$ for $n = 2m$, $m \in \N$.
Finally, the subalgebras $\mathcal{A}_1,\ldots,\mathcal{A}_m$ are free if for every $n$, every sequence $i_1,\ldots,i_n \in \{1,\ldots,m\}$, such that $i_k \neq i_{k+1}$ for $k=1,\ldots,n-1$, and every sequence $a_k \in \mathcal{A}_{i_k}$, we have
\begin{displaymath}
  \varphi(a_1,\ldots,a_n) = 0.
\end{displaymath}

A family of subsets of $\mathcal{A}$ is called free if the unital subalgebras they generate are free in the above sense. An element $c \in \mathcal{A}$ is called \emph{circular} if it is of the form $c = (a+\sqrt{-1}b)/\sqrt{2}$ for two free semicircular elements $a,b$.

Given a sequence of $\C^\ast$-probability spaces $(\mathcal{A}_N,\varphi_N)$ and a sequence of $m$-tuples $A_N = (a^N_1,\ldots,a^N_m)$, $N \ge 1$ together with an $m$-tuple $A=(a_1,\ldots,a_m)$ of elements of $\mathcal{A}$, we say that $A_N$ converges to $A$ in moments, if for all noncommutative polynomials in the variables $x_1,\ldots,x_m$,
\begin{displaymath}
  \lim_{N\to \infty} \varphi_N(A_N) = \varphi(A).
\end{displaymath}

Below we will use this notion of convergence, treating random $n\times n$ matrices as elements of the noncommutative probability space $(\mathcal{M}_n,\frac{1}{n}\E \tr)$, where $\mathcal{M}_n$ is the set of $M(n)$ valued random variables.

The first result of this section is the following

\begin{prop}\label{prop:freeness}
  Let  $G_N$ be a sequence of finite groups with $|G_N| \to \infty$. Assume that the sequence of measures $\widetilde{\mu}_{G_N}$ converges weakly to Dirac's mass at infinity and let $X^{N,1}= (X^{N,1}_g)_{g \in G_N},\ldots,X^{N,m} = (X^{N,m}_g)_{g \in G_N}$ be independent families of independent standard complex Gaussian variables. Then the family of matrices $(|G_N|^{-1/2}P_{X^{N,1}},|G_N|^{-1/2}P_{X^{N,1}}^\ast, \ldots,|G_N|^{-1/2}P_{X^{N,m}},|G_N|^{-1/2}P_{X^{N,m}}^\ast)$ converges in moments to the family $(c_1,c_1^\ast,\ldots,c_m,c_m^\ast)$, where $c_1,\ldots,c_m$ are circular elements such that $\{c_1,c_1^\ast\}$,$\ldots$, $\{c_m,c_m^\ast\}$ are free.
\end{prop}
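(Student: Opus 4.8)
The plan is to use the block-diagonalization from Corollary \ref{cor:deterministic-spectrum-explained} together with Proposition \ref{prop:Gaussian-distribution}, and then reduce the statement to the known asymptotic freeness of independent Ginibre matrices. First I would observe that, by the Fourier isometry, for a single sequence $X^{N,i}$ the matrix $|G_N|^{-1/2}P_{X^{N,i}}$ is conjugate (via a fixed unitary independent of the randomness) to $\bigoplus_{\Lambda \in \widehat{G}_N} M_{|G_N|^{-1/2}\widehat{X^{N,i}}(\Lambda)}$, i.e. to $\bigoplus_\Lambda \big(|G_N|^{-1/2}\widehat{X^{N,i}}(\Lambda)\otimes \id_{\dim\Lambda}\big)$; by Proposition \ref{prop:Gaussian-distribution} the family of blocks $\big(|G_N|^{-1/2}\widehat{X^{N,i}}(\Lambda)\big)_{i\le m}$ has the same joint distribution as an independent family $(\Gamma_\Lambda^{(i)})_{i\le m}$ of (independent across $i$ and across $\Lambda$) Ginibre matrices of size $\dim\Lambda$. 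Moreover, since the conjugating unitary does not depend on the $X^{N,i}$, the whole $2m$-tuple $(|G_N|^{-1/2}P_{X^{N,1}},|G_N|^{-1/2}P_{X^{N,1}}^\ast,\ldots)$ has the same $\ast$-distribution, as an element of $(\mathcal{M}_{|G_N|},\tfrac{1}{|G_N|}\E\tr)$, as the $2m$-tuple of block-diagonal matrices built from the $\Gamma_\Lambda^{(i)}$ and their adjoints.

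Next I would compute a normalized trace of a word in these matrices. Fix a noncommutative $\ast$-monomial $w$ in $x_1,\ldots,x_m$; evaluating $w$ on the block-diagonal matrices and using $\tr(A\otimes\id_d)=d\,\tr A$ and the multiplicativity of $M_Q$ gives
\begin{displaymath}
\frac{1}{|G_N|}\E\tr\, w\big(|G_N|^{-1/2}P_{X^{N,\bullet}}\big)
= \sum_{\Lambda\in\widehat{G}_N}\frac{(\dim\Lambda)^2}{|G_N|}\cdot\frac{1}{\dim\Lambda}\E\tr\, w\big(\Gamma^{(\bullet)}_\Lambda\big)
= \sum_{\Lambda\in\widehat{G}_N}\mu_{G_N}(\Lambda)\,\frac{1}{\dim\Lambda}\E\tr\, w\big(\Gamma^{(\bullet)}_\Lambda\big).
\end{displaymath}
Thus the quantity of interest is the $\mu_{G_N}$-average of the normalized moments $m_w(d):=\frac{1}{d}\E\tr\, w(\Gamma^{(\bullet)}_{(d)})$, where $\Gamma^{(\bullet)}_{(d)}$ is an independent family of $m$ Ginibre matrices of size $d$. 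By the classical result on asymptotic freeness of independent Ginibre ensembles (Voiculescu; see e.g. \cite{MR1746976,MR2760897}), as $d\to\infty$ we have $m_w(d)\to \varphi\big(w(c_1,c_1^\ast,\ldots,c_m,c_m^\ast)\big)$, where $c_1,\ldots,c_m$ are circular with $\{c_i,c_i^\ast\}$ free; moreover the sequence $m_w(d)$ is bounded in $d$ (the bound depends only on $w$ — this is again standard from genus/moment estimates for Ginibre matrices). Now since $\widetilde{\mu}_{G_N}\to\delta_\infty$ weakly and $d\mapsto m_w(d)$ is a bounded sequence converging at $\infty$, hence extends to a bounded continuous function on $\overline{\Z}_+$, we get $\sum_\Lambda \mu_{G_N}(\Lambda) m_w(\dim\Lambda)=\int m_w\, d\widetilde{\mu}_{G_N}\to m_w(\infty)=\varphi(w(c_\bullet))$. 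Summing over the monomials of a general polynomial finishes the proof.

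The main obstacle is making the ``off-the-shelf'' input clean: one needs the asymptotic $\ast$-freeness of $m$ independent complex Ginibre matrices \emph{together with} a uniform (in $d$) bound on $|\frac{1}{d}\E\tr\, w(\Gamma^{(\bullet)}_{(d)})|$ for each fixed word $w$, so that the interchange of the $\widetilde{\mu}_{G_N}$-average with the $d\to\infty$ limit is justified by dominated convergence against the converging sequence of measures on the compact space $\overline{\Z}_+$. Both facts are classical, but if one prefers a self-contained route one can instead prove them directly by the genus expansion: expanding $\frac{1}{d}\E\tr\, w$ over pairings of the Gaussian entries, the leading (planar, non-crossing) pairings — which must pair an $x_i$ with an $x_i^\ast$ — contribute the free moment and are $O(1)$, while non-planar pairings contribute $O(d^{-2})$; this simultaneously yields the limit and the uniform bound. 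Everything else (the conjugation argument, the block-trace identity, the reduction of general polynomials to monomials, the passage from weak convergence of $\widetilde\mu_{G_N}$ to convergence of the averaged moments) is routine bookkeeping of the kind already carried out in the proof of Theorem \ref{thm:eigenvalues}.
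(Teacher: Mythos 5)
Your proposal is correct and follows essentially the same route as the paper: pass via the Fourier transform to the block decomposition so that the normalized expected trace of a word becomes $\sum_{\Lambda}\mu_{G_N}(\Lambda)\,\frac{1}{\dim\Lambda}\E\tr\,w(\Gamma^{(\bullet)}_\Lambda)$ with independent Ginibre blocks (Lemma \ref{le:covariances}/Proposition \ref{prop:Gaussian-distribution}), then invoke asymptotic $\ast$-freeness of independent Ginibre ensembles and use $\widetilde{\mu}_{G_N}\to\delta_\infty$ to pass to the limit. Your phrasing of the last step as integration of a bounded function on $\overline{\Z}_+$ extended continuously at infinity is just a repackaging of the paper's explicit split at a threshold $n_0$, with the small-dimension contribution controlled by finiteness of Gaussian moments.
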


\begin{remark}
The interest in the above theorem stems from two reasons. First, it allows to calculate the asymptotic mean eigenvalue distribution of Hermitian matrices created with use of $(P_{X^{N,i}})_{i \le m}$, for instance it shows that the mean eigenvalue distribution of $\frac{1}{\sqrt{2|G_N|}}(P_{X^{N,1}} + P_{X^{N,1}}^\ast)$ converges to the Wigner's semicircular law while for $\frac{1}{|G_N|}(P_{X^{N,1}}P_{X^{N,1}}^\ast + P_{X^{N,2}}P_{X^{N,2}}^\ast)$ the limit is the free Poisson (Marchenko-Pastur) distribution with parameter $2$, i.e. the measure with density $f(x)= (2\pi x)^{-1}\sqrt{(x-a_-)(a_+ -x)}\ind{[a_-,a_+]}(x)$, where  $a_\pm = (1\pm\sqrt{2})^2$.

Second, it provides a natural class of examples of asymptotically free structured matrices which are not unitarily invariant and have dependent coefficients, which is in some contrast with the usual examples known from the literature.
\end{remark}

\begin{remark}
  Note that in the case of non-Hermitian matrices, being $\ast$-polynomial functions of the sequence $P_{X^{N,i}}$, deriving limiting spectral distribution from asymptotic freeness is a more subtle issue due to lack of implications between moments convergence and weak convergence of spectral measure (an issue which has been the reason of difficulties in the proof of the circular law for random matrices, see \cite{MR2908617} for an extensive discussion and an overview of the history of this problem).
\end{remark}
\begin{remark}
If one assumes more about the speed of convergence of $\widehat{\mu}_{G_N}$ to $\delta_\infty$, then using the full strength of concentration results for polynomials in random matrices due to Meckes and Szarek \cite{MR2869165} one should be able to improve the above results to asymptotic freeness almost everywhere (see \cite{MR1746976,MR3585560} for precise definition). It is however easy to see that for general sequences of groups one cannot hope for such results and therefore we are not going to pursue this direction here.
\end{remark}

\begin{proof}[Proof of Proposition \ref{prop:freeness}] Our goal is to prove that for every noncommutative polynomial $Q$ in variables $x_1,\ldots,x_{2m}$,
\begin{align}\label{eq:freeness-to-be-proven}
  &\frac{1}{|G_N|}\E \tr Q(|G_N|^{-1/2}P_{X^{N,1}},|G_N|^{-1/2}P_{X^{N,1}}^\ast,\ldots,|G_N|^{-1/2}P_{X^{N,m}},|G_N|^{-1/2}P_{X^{N,m}}^\ast) \nonumber \\
  &\to Q(c_1,c_1^\ast,\ldots,c_m,c_m^\ast).
\end{align}
By linearity it is clearly enough to consider monomials. Let thus $Q(x_1,\ldots,x_{2m}) = x_{i_1}\cdots x_{i_{d}}$ for some positive integer $d$ and $i_1,\ldots,i_d \in \{1,\ldots,2m\}$. To simplify the notation for $g \in G_N$ denote also $Y^{N,i}_g = \frac{1}{\sqrt{|G_N|}} X^{N,(i+1)/2}_g$ for $i$ odd and $Y^{N,i} = \frac{1}{\sqrt{|G_N|}} \overline{X^{N,i/2}_{g^{-1}}}$ for $i$ even. Note that $\frac{1}{\sqrt{|G_N|}} P_{X^{N,i}}^\ast = P_{Y^{N,2i}}$. We have
\begin{align*}
&\frac{1}{|G_N|} \tr Q(\frac{1}{\sqrt{|G_N|}}P_{X^{N,1}},\frac{1}{\sqrt{|G_N|}}P_{X^{N,1}}^\ast,\ldots,\frac{1}{\sqrt{|G_N|}}P_{X^{N,m}},\frac{1}{\sqrt{|G_N|}}P_{X^{N,m}}^\ast) \\
&= \int_\C x dL_{P_{Y^{N,i_1}}\cdots P_{Y^{N,i_d}}} = \int_\C x dL_{P_{Y^{N,i_1}\ast \cdots\ast Y^{N,i_d}}}.
\end{align*}

By Corollary \ref{cor:deterministic-spectrum-explained} and the product property of the Fourier transform, the right-hand side above equals
\begin{displaymath}
  \sum_{\Lambda \in \widehat{G_N}} \mu_{G_N}(\Lambda) \int_C x dL_{\widehat{Y^{N,i_1}}(\Lambda)\cdots \widehat{Y^{N,i_d}}(\Lambda)} =
  \sum_{\Lambda \in \widehat{G_N}} \mu_{G_N}(\Lambda) \frac{1}{\dim (\Lambda)} \tr \widehat{Y^{N,i_1}}(\Lambda)\cdots \widehat{Y^{N,i_d}}(\Lambda).
\end{displaymath}
Thus, the left hand side of \eqref{eq:freeness-to-be-proven} equals
\begin{align}\label{eq:freeness-Fourier-representation}
  \sum_{\Lambda \in \widehat{G_N}} \mu_{G_N}(\Lambda) \frac{1}{\dim (\Lambda)} \E \tr \widehat{Y^{N,i_1}}(\Lambda)\cdots \widehat{Y^{N,i_d}}(\Lambda).
\end{align}
By Lemma \ref{le:covariances} denoting $\dim \Lambda = n$, $\widehat{Y^{N,1}}(\Lambda),\ldots,\widehat{Y^{N,2m}}(\Lambda)$ have the same joint distribution as $\Gamma_{n,1}$, $\Gamma_{n,1}^\ast$,$\ldots$, $\Gamma_{n,m}$, $\Gamma_{n,m}^\ast$, where $\Gamma_{n,i}, i\le m$ are i.i.d. Ginibre matrices of size $n\times n$. Since it is known (see Corollary 4.3.8 and the discussion on p. 147 in \cite{MR1746976}) that the latter sequence converges in moments to $(c_1,c_1^\ast,\ldots,c_m,c_m^\ast)$, we get that for every $\varepsilon > 0$, there exists $n_0 > 0$ such that for all $N$,
\begin{align*}
\Big|\sum_{{\Lambda \in \widehat{G_N}}\atop{\dim \Lambda > n_0}} \E \mu_{G_N}(\Lambda) \frac{1}{\dim (\Lambda)} \tr \widehat{Y^{N,i_1}}(\Lambda)\cdots \widehat{Y^{N,i_d}}(\Lambda) - \widetilde{\mu}_{G_N}([n_0,\infty]) \varphi(c_1,c_1^\ast,\ldots,c_m,c_m^\ast)\Big| \le \varepsilon.
\end{align*}
Using the convergence of $\widetilde{\mu}_{G_N}$ to Dirac's mass at $\infty$, we conclude that for $N$ large enough,
\begin{displaymath}
\Big|\sum_{{\Lambda \in \widehat{G_N}}\atop{\dim \Lambda > n_0}} \E \mu_{G_N}(\Lambda) \frac{1}{\dim (\Lambda)} \tr \widehat{Y^{N,i_1}}(\Lambda)\cdots \widehat{Y^{N,i_d}}(\Lambda) - \varphi(c_1,c_1^\ast,\ldots,c_m,c_m^\ast)\Big| \le 2\varepsilon.
\end{displaymath}
It thus remains to show that the contribution to \eqref{eq:freeness-Fourier-representation} from small-dimensional representations is negligible.
Note however that if $\dim n \le n_0$, then
\begin{displaymath}
\E \tr \widehat{Y^{N,i_1}}(\Lambda)\cdots \widehat{Y^{N,i_d}}(\Lambda)
\end{displaymath}
is the expectation of one of at most $C(n_0,d)$ polynomials in i.i.d. standard complex Gaussian variables and their complex conjugates, where $C(n_0,d)$ depends only on $n_0$ and $d$. Since Gaussian variables have all moments, there exists a constant $C'(n_0,d)$ such that
\begin{displaymath}
\max_{N,\Lambda \in \widehat{G_N}, \dim \Lambda \le n_0} \E \tr \widehat{Y^{N,i_1}}(\Lambda)\cdots \widehat{Y^{N,i_d}}(\Lambda) \le C'(n_0,d).
\end{displaymath}
Thus
\begin{displaymath}
  |\sum_{{\Lambda \in \widehat{G_N}}\atop{\dim \Lambda \le n_0}} \E \mu_{G_N}(\Lambda) \frac{1}{\dim (\Lambda)} \tr \widehat{Y^{N,i_1}}(\Lambda)\cdots \widehat{Y^{N,i_d}}(\Lambda)| \le C'(n_0,d) \widetilde{\mu}_{G_N}([1,n_0]) ,
\end{displaymath}
which converges to zero as $N \to \infty$. This proves the theorem.
\end{proof}

We can now extend the result on freeness to more general distributions.

\begin{theorem}\label{thm:freeness-nongaussian} Let $G_N$ be as sequence of finite groups with $|G_N|\to \infty$. Assume that the sequence of measures $\widetilde{\mu}_{G_N}$ converges weakly to Dirac's mass at infinity. Let $\xi$ be a complex random variable such that $\E \xi = \E \xi^2 = 0$, $\E|\xi|^2 = 1$ and $\xi$ has finite moments of all orders.
 Let also $X^{N,1}= (X^{N,1}_g)_{g \in G_N},\ldots,X^{N,m} = (X^{N,m}_g)_{g \in G_N}$ be independent families of independent copies of $\xi$. Then the family of random matrices $(|G_N|^{-1/2}P_{X^{N,1}},|G_N|^{-1/2}P_{X^{N,1}}^\ast, \ldots,|G_N|^{-1/2}P_{X^{N,m}},|G_N|^{-1/2}P_{X^{N,m}}^\ast)$ converges in moments to the family $(c_1,c_1^\ast,\ldots,c_m,c_m^\ast)$, where $c_1,\ldots,c_m$ are circular elements such that $\{c_1,c_1^\ast\}$,$\ldots$, $\{c_m,c_m^\ast\}$ are free.
\end{theorem}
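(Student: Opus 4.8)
The plan is to reduce the statement to the Gaussian case already established in Proposition~\ref{prop:freeness}, via a universality statement for the noncommutative moments entering the Fourier representation \eqref{eq:freeness-Fourier-representation}. Fix a noncommutative monomial $Q(x_1,\dots,x_{2m})=x_{i_1}\cdots x_{i_d}$; as in the proof of Proposition~\ref{prop:freeness} one has
\[
\frac{1}{|G_N|}\E\tr Q\bigl(|G_N|^{-1/2}P_{X^{N,1}},|G_N|^{-1/2}P_{X^{N,1}}^\ast,\dots\bigr)=\sum_{\Lambda\in\widehat{G_N}}\mu_{G_N}(\Lambda)\,m_Y(\Lambda),
\]
where $m_Y(\Lambda)=\frac{1}{\dim\Lambda}\E\tr\widehat{Y^{N,i_1}}(\Lambda)\cdots\widehat{Y^{N,i_d}}(\Lambda)$ and $Y^{N,i}_g$ denotes the $|G_N|^{-1/2}$-scaled copy of $X^{N,(i+1)/2}_g$ for odd $i$ and of $\overline{X^{N,i/2}_{g^{-1}}}$ for even $i$, exactly as in that proof. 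Introducing independent families $Z^{N,1},\dots,Z^{N,m}$ of i.i.d.\ standard complex Gaussian variables on $G_N$ and the analogous quantities $m_Z(\Lambda)$, Proposition~\ref{prop:freeness} applied to the $Z^{N,a}$ gives $\sum_{\Lambda}\mu_{G_N}(\Lambda)m_Z(\Lambda)\to\varphi(Q(c_1,c_1^\ast,\dots,c_m,c_m^\ast))$. Since $\mu_{G_N}$ is a probability measure,
\[
\Bigl|\sum_{\Lambda}\mu_{G_N}(\Lambda)m_Y(\Lambda)-\sum_{\Lambda}\mu_{G_N}(\Lambda)m_Z(\Lambda)\Bigr|\le\sup_{\Lambda\in\widehat{G_N}}\bigl|m_Y(\Lambda)-m_Z(\Lambda)\bigr|,
\]
so it suffices to prove that this supremum tends to $0$ as $N\to\infty$.

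The heart of the matter is therefore a mixed-moment version of Lemma~\ref{le:moments-universality}: for \emph{any} sequence $\Lambda_N$ of irreducible representations of $G_N$ (with $\dim\Lambda_N$ not necessarily tending to infinity) one has $m_Y(\Lambda_N)-m_Z(\Lambda_N)\to0$; the uniform statement then follows by choosing near-extremizers and passing to a subsequence. To prove this I would repeat the moment-method expansion from the proof of Lemma~\ref{le:moments-universality}: writing $\widehat{Y^{N,i_t}}(\Lambda_N)=\sum_{g}Y^{N,i_t}_g\Lambda_N(g)$ and using $\Lambda_N(g_1)\cdots\Lambda_N(g_d)=\Lambda_N(g_1\cdots g_d)$, one expands $\E\tr\widehat{Y^{N,i_1}}(\Lambda_N)\cdots\widehat{Y^{N,i_d}}(\Lambda_N)$ as a sum over $(g_1,\dots,g_d)\in G_N^{\,d}$ in which every trace of a product of representation matrices has modulus at most $\dim\Lambda_N$. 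Independence across the $m$ families together with $\E\xi=0$ kills every tuple in which some pair (family index, group element) occurs exactly once; the finiteness of all moments of $\xi$, combined with the counting bound that at most $C_d|G_N|^{\lfloor(d-1)/2\rfloor}$ tuples have every such pair occurring at least twice and some occurring at least three times, shows that these tuples contribute $O(|G_N|^{-1/2})$ to both $m_Y(\Lambda_N)$ and $m_Z(\Lambda_N)$ (the factor $\dim\Lambda_N$ from the trace bound cancelling the prefactor $1/\dim\Lambda_N$). What is left are the ``pair'' tuples, possible only for even $d$, in which every pair (family index, group element) occurs exactly twice; for such a tuple the relevant expectation equals $1$ when the two occurrences consist of one copy of some $X$ and one of $\overline{X}$ and equals $0$ otherwise --- and exactly the same description holds for the Gaussian families, since both $\xi$ and a standard complex Gaussian satisfy $\E(\cdot)=\E(\cdot)^2=0$ and $\E|\cdot|^2=1$ (this is the content of Lemma~\ref{le:covariances}). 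Hence the pair-tuple contributions to $m_Y(\Lambda_N)$ and to $m_Z(\Lambda_N)$ agree term by term, and $m_Y(\Lambda_N)-m_Z(\Lambda_N)\to0$.

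The main obstacle is the bookkeeping in this last step: one must keep careful track of the inversions $g\mapsto g^{-1}$ coming from the adjoints $P_{X^{N,i}}^\ast$ when identifying which tuples are ``pair'' tuples and which pairings are admissible, and verify that on those tuples the weights attached to $\xi$ and to the Gaussian genuinely coincide --- this is precisely where the hypothesis $\E\xi^2=0$ enters. None of this is deep, as it follows the pattern of Lemma~\ref{le:moments-universality}, and it is the only place where work beyond invoking Proposition~\ref{prop:freeness} is required; in particular, since $\xi$ is assumed to have moments of all orders no truncation is needed, and since only convergence of expectations is claimed the concentration inequality of Meckes and Szarek plays no role here.
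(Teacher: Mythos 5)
Your proposal is correct and follows essentially the same route as the paper: there, Theorem \ref{thm:freeness-nongaussian} is deduced from Proposition \ref{prop:freeness} together with a mixed-moment universality statement (Lemma \ref{le:universality-polynomials}), which is proved uniformly over representations by exactly your subsequence reduction and the same moment-method case analysis (singleton variables kill the term, multiplicities $\ge 3$ contribute $O(|G_N|^{-1/2})$, and pair terms depend only on $\E\xi^2=0$, $\E\bar{\xi}^2=0$, $\E|\xi|^2=1$, hence coincide with the Gaussian case). The only cosmetic difference is that you organize the pairing by (family index, group element) pairs while the paper keys on group elements alone; both bookkeepings yield the same conclusion.
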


The above result is an immediate corollary to Proposition \ref{prop:freeness} and the following generalization of Lemma \ref{le:moments-universality} (note that it is valid in greater generality than needed just for Theorem \ref{thm:freeness-nongaussian}).

\begin{lemma}\label{le:universality-polynomials}
Let $G_N$ be as sequence of finite groups with $|G_N|\to \infty$. Let $\xi$ be a complex random variable such that $\E \xi = \E \xi^2 = 0$, $\E|\xi|^2 = 1$ and $\xi$ has finite moments of all orders.
 Let also $X^{N,1}= (X^{N,1}_g)_{g \in G_N},\ldots,X^{N,m} = (X^{N,m}_g)_{g \in G_N}$ be independent families of i.i.d. copies of $\xi$ and $Y^{N,1}= (Y^{N,1}_g)_{g \in G_N},\ldots,Y^{N,m} = (Y^{N,m}_g)_{g \in G_N}$ be independent families of i.i.d. copies of a standard complex Gaussian variable. Let $Q$ be a non-commutative polynomial in the variables $x_1,\ldots,x_{2m}$, and define
 \begin{align*}
   A_N &= \frac{1}{|G_N|}\E \tr Q\Big(\Big(\frac{1}{\sqrt{|G_N|}} P_{X^{N,i}},\frac{1}{\sqrt{|G_N|}} P_{X^{N,i}}^\ast\Big)_{i \le m}\Big),\\
 B_N & = \frac{1}{|G_N|}\E \tr Q\Big(\Big(\frac{1}{\sqrt{|G_N|}} P_{Y^{N,i}},\frac{1}{\sqrt{|G_N|}} P_{Y^{N,i}}^\ast\Big)_{i \le m}\Big).
 \end{align*}
 Then $A_N - B_N \to 0$ as $N \to \infty$.
\end{lemma}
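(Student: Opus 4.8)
The plan is to adapt the proof of Lemma \ref{le:moments-universality} to an arbitrary $\ast$-monomial in the $P_{X^{N,i}}$, combining it with the Fourier-analytic reduction already used in the proof of Proposition \ref{prop:freeness}. By linearity of $\E$ and $\tr$ it suffices to treat a monomial $Q(x_1,\ldots,x_{2m})=x_{i_1}\cdots x_{i_d}$. As in that proof, introduce for $g\in G_N$ the convolution kernels $Z^{N,i}$ given by $Z^{N,i}_g=|G_N|^{-1/2}X^{N,(i+1)/2}_g$ for $i$ odd and $Z^{N,i}_g=|G_N|^{-1/2}\overline{X^{N,i/2}_{g^{-1}}}$ for $i$ even, so that the $k$-th variable of $Q$ is evaluated at $P_{Z^{N,k}}$, and define $W^{N,i}$ analogously from the Gaussian families $Y^{N,i}$. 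Corollary \ref{cor:deterministic-spectrum-explained} together with the multiplicativity of the Fourier transform then yields, exactly as in \eqref{eq:freeness-Fourier-representation},
\begin{displaymath}
A_N=\sum_{\Lambda\in\widehat{G_N}}\mu_{G_N}(\Lambda)\,\frac{1}{\dim\Lambda}\,\E\tr\bigl(\widehat{Z^{N,i_1}}(\Lambda)\cdots\widehat{Z^{N,i_d}}(\Lambda)\bigr),
\end{displaymath}
and the analogous identity for $B_N$ with $Z$ replaced by $W$. So it is enough to bound, uniformly over irreducible $\Lambda$, the difference of the corresponding normalized traces, and then integrate against the probability measure $\mu_{G_N}$.

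Next I would run a Wigner-type moment expansion. Writing $\widehat{Z^{N,i_j}}(\Lambda)=\sum_{g\in G_N}Z^{N,i_j}_g\Lambda(g)$, the normalized trace $\frac{1}{\dim\Lambda}\E\tr(\cdots)$ becomes $\frac{1}{\dim\Lambda\,|G_N|^{d/2}}$ times a sum over tuples $(h_1,\ldots,h_d)\in G_N^d$ of $\bigl(\E\prod_{j=1}^d\widetilde{X}^{i_j}_{h_j}\bigr)\,\tr\bigl(\prod_{j=1}^d\Lambda(h_j)^{\epsilon_j}\bigr)$, where each $\widetilde{X}^{i_j}_{h_j}$ is $X^{N,\lceil i_j/2\rceil}_{h_j}$ or its complex conjugate and each $\epsilon_j\in\{1,-1\}$ is dictated by the parity of $i_j$; an identical expansion holds on the Gaussian side. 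Grouping the indices $j$ by the pair (family, group element) $(\lceil i_j/2\rceil,h_j)$ and using independence of the $m$ families together with mean zero of the entries, every tuple with a singleton group contributes zero. The surviving tuples involve at most $\lfloor d/2\rfloor$ distinct such pairs; those possessing a pair of multiplicity $\ge3$ --- together with all surviving tuples when $d$ is odd --- number $O(|G_N|^{\lceil d/2\rceil-1})$, so, using that $\xi$ has moments of all orders (H\"older bounds the scalar factor by a constant depending only on $d$ and the law of $\xi$) and that $|\tr(\prod_j\Lambda(h_j)^{\epsilon_j})|\le\dim\Lambda$ because a product of unitaries is unitary, their total contribution is $O(|G_N|^{-1/2})$, uniformly in $\Lambda$; and likewise for $W$.

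Finally I would extract the cancellation. The only tuples that are not negligible are those in which every (family, group element) pair occurs with multiplicity exactly two (so $d$ is even); for such a tuple the scalar factor is a product, over these pairs, of factors equal to $\E\xi^2$, $\E\overline\xi^2$ or $\E|\xi|^2$, hence --- thanks to the normalization $\E\xi^2=0$, $\E|\xi|^2=1$ --- equal to $1$ when every pair matches a $\xi$ with a $\overline\xi$ and to $0$ otherwise. A standard complex Gaussian obeys the same relations $\E Y=\E Y^2=0$, $\E|Y|^2=1$, and the trace factor does not depend on the distribution at all, so these principal contributions coincide in $A_N$ and $B_N$ and cancel in the difference; what is left is bounded by the error terms, and summing against $\mu_{G_N}$ gives $|A_N-B_N|=O(|G_N|^{-1/2})\to0$.

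The main obstacle I anticipate is purely organizational: one must simultaneously track which of the $m$ independent families a given factor belongs to, whether it is a variable or its conjugate (i.e.\ whether it comes from $P_{X^{N,i}}$ or from $P_{X^{N,i}}^\ast$), and the repetitions among the $h_j$, and then check that both the pairing count $O(|G_N|^{\lceil d/2\rceil-1})$ and the H\"older bound are genuinely uniform in $\Lambda$ (note that the $\dim\Lambda$ in the denominator cancels the one from the trace bound), so that integration against the Plancherel measure preserves the $o(1)$ estimate. Once the combinatorics is set up this way, the rest is the argument of Lemma \ref{le:moments-universality} essentially verbatim.
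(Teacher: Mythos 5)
Your proposal is correct and follows essentially the same route as the paper: reduce to a monomial, pass to Fourier transforms via Corollary \ref{cor:deterministic-spectrum-explained}, and run the Wigner-type moment expansion in which singletons vanish, higher-multiplicity tuples contribute $O(|G_N|^{-1/2})$ uniformly in $\Lambda$, and the exactly-paired tuples depend only on $\E\xi^2=0$, $\E|\xi|^2=1$, hence coincide with the Gaussian side. The only (harmless) differences are organizational: you obtain the uniform-in-$\Lambda$ estimate directly rather than via the paper's subsequence argument, and you group repetitions by (family, group element) pairs where the paper groups by group element alone.
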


\begin{proof}
The proof is based on the same strategy as in the case of Lemma \ref{le:moments-universality}, namely one shows that the asymptotic behaviour of mixed moments depends only on the covariance structure of the underlying random sequences. We provide the details for the sake of completeness.

Without loss of generality we can assume that $Q$ is a noncommutative monomial, specifically that $Q(x_1,\ldots,x_{2m}) = x_{i_1}\cdots x_{i_{n}}$ for some positive integer $n$ and $i_1,\ldots,i_n \in \{1,\ldots,2m\}$.

Denote for any odd $i \le 2m$, the sequences $S^{N,i}_g = \frac{1}{\sqrt{|G_N|}} X^{N,(i+1)/2}_g$ and $Z^{N,i} = \frac{1}{\sqrt{|G_N|}} Y^{N,(i+1)/2}_g$ and for $i\le m$ even
$S^{N,i}_g = \frac{1}{\sqrt{|G_N|}} \overline{X^{N,i/2}_{g^{-1}}}$ and $Z^{N,i}_g = \frac{1}{\sqrt{|G_N|}} \overline{Y^{N,i/2}_{g^{-1}}}$.
Our goal is to prove that
\begin{displaymath}
  \Big|\frac{1}{|G_N|} \E \tr Q((P_{S^{N,i}})_{i\le 2m}) - \frac{1}{|G_N|} \E \tr Q((P_{Z^{N,i}})_{i\le 2m})\Big| \to 0
\end{displaymath}
as $N \to \infty$ or equivalently, after passing to Fourier transforms via Corollary \ref{cor:deterministic-spectrum-explained}, that
\begin{align}\label{eq:goal-goal}
  \Big|\sum_{\Lambda \in \widehat{G_N}} \mu_{G_N}(\Lambda) \frac{1}{\dim \Lambda} \Big(\E \tr \widehat{S^{N,i_1}}(\Lambda)\cdots \widehat{S^{N,i_n}}(\Lambda) - \E \tr \widehat{Z^{N,i_1}}(\Lambda)\cdots \widehat{Z^{N,i_n}}(\Lambda)\Big)\Big| \to 0.
\end{align}

\medskip

We will first show that for every $\varepsilon$ there exists $N_0$ such that for all $N > N_0$ and any representation $\Lambda$ of $G_N$
\begin{align}\label{eq:high-dim-epsilon}
  \Big|\frac{1}{\dim \Lambda} \Big(\E \tr \widehat{S^{N,i_1}}(\Lambda)\cdots \widehat{S^{N,i_n}}(\Lambda) - \E \tr \widehat{Z^{N,i_1}}(\Lambda)\cdots \widehat{Z^{N,i_n}}(\Lambda)\Big)\Big| \le \varepsilon.
\end{align}

For this it is enough to show that for any sequence $G_N$ such that $|G_N|\to \infty$  and any sequence of irreducible representations $\Lambda_N \in \widehat{G}_N$ of dimension $d_N$ we have
\begin{align}\label{eq:high-dim-limes}
  \Big |\frac{1}{d_N} \Big(\E \tr \widehat{S^{N,i_1}}(\Lambda_N)\cdots \widehat{S^{N,i_n}}(\Lambda_N) - \E \tr \widehat{Z^{N,i_1}}(\Lambda_N)\cdots \widehat{Z^{N,i_n}}(\Lambda_N)\Big)\Big| \to 0,
\end{align}
since if \eqref{eq:high-dim-epsilon} does not hold, than we can choose a subsequence $G_{N_k}, \Lambda_{N_k}$ for which \eqref{eq:high-dim-limes} fails.

We will now show that for every $\xi$ such that $\E \xi = \E \xi^2 = 0$, $\E |\xi|^2 = 1$,
\begin{displaymath}
 M_N := \frac{1}{d_N} \E \tr \widehat{S^{N,i_1}}(\Lambda_N)\cdots \widehat{S^{N,i_n}}(\Lambda_N) = M_N' + o(1),
\end{displaymath}
where $M_N'$ depends only on $N$ but not on $\xi$. Since the standard complex Gaussian variable satisfies the above moment conditions, this will end the proof of \eqref{eq:high-dim-limes} and as a consequence of \eqref{eq:high-dim-epsilon}.

Let us define $\widetilde{S}^{N,i}_g = S^{N,i}_g = \frac{1}{\sqrt{|G_N|}} X^{N,(i+1)/2}_g$ for $i$ odd and $\widetilde{S}^{N,i}_g = S^{N,i}_{g^{-1}} = \frac{1}{\sqrt{|G_N|}} \overline{X^{N,i/2}_g}$ for $i$ even.
Denote also $\Lambda_N(g,i) = \Lambda_N(g)$ for $i$ odd and $\Lambda_N(g,i) = \Lambda_N(g)^\ast$ for $i$ even.
With this notation we have
\begin{align}\label{eq:fourier-even-odd}
  \widehat{S^{N,i}}(\Lambda_N) = \sum_{g \in G_N} \widetilde{S}^{N,i}_g \Lambda_N(g,i)
\end{align}
and so we have
\begin{displaymath}
  M_N = \sum_{g_1,\ldots g_{n} \in G_N} \frac{1}{d_N} (\E \prod_{l=1}^{n} \widetilde{S}^{N,i_l}_{g_l})\tr \prod_{l=1}^{n} \Lambda_N(g_l,i_l).
\end{displaymath}

If there exists $h \in G_N$ such that $g_i = h$ for exactly one $i \in \{1,\ldots,n\}$ then due to independence of the variables $X^{N,i}_g$, $i \le m, g \in G_N$, we have $\E \prod_{l=1}^{n} \widetilde{S}^{N,i_l}_{g_l} = 0$, so the sequence $(g_l)_{l=1}^{n}$ does not contribute to $M_N$.

On the other hand the cardinality of the set of sequences $(g_l)_{l=1}^{n}$ such that for every $h \in G_N$, $g_i = h$ for none or at least two indices $i$ and for some $h \in G_N$ there are more than two indices with this property, is bounded by $C_n |G_N|^{\lceil n/2 \rceil - 1}$ for some constant $C_n$ depending only on $n$. Moreover, by H\"older's inequality we have $|\E \prod_{l=1}^{n} \widetilde{S}^{N,i_l}_{g_l}| \le D_n |G_N|^{-n/2}$ for some constant $D_n$ depending on $n$ and $\xi$. Finally  $ |\tr \prod_{l=1}^n \Lambda_N(g_l,i_l)| \le d_N$ since $\Lambda_N$ is a unitary representation.

Thus the contribution to $M_N$ from this class of sequences is at most
\begin{displaymath}
  C_n |G_N|^{\lceil n/2\rceil -1} \frac{1}{d_N} D_n |G_N|^{-n/2} d_N \le  \frac{C_nD_n}{\sqrt{|G_N|}} \stackrel{N \to \infty}{\to} 0.
\end{displaymath}

To each sequence $(g_l)_{l=1}^n$ which does not belong to the two classes considered above (note that such sequences exist only for even $n$) one can associate a pair partition $P = \{ \{j_l,k_l\}\colon l =1,\ldots,n/2\}$
of the set $\{1,\ldots,n\}$ such that for all $l$, $g_{j_l} = g_{k_{l}}$ and for $l_1\neq l_2$, $g_{j_{l_1}} \neq g_{j_{l_2}}$.
By the independence assumption
\begin{displaymath}
  \E \prod_{l=1}^{n} \widetilde{S}^{N,i_l}_{g_l} = \prod_{l=1}^{n/2} \E (\widetilde{S}^{N,i_{j_l}}_{g_{j_l}} \widetilde{S}^{N,i_{k_l}}_{g_{k_l}}).
\end{displaymath}
The expectation $\E (\widetilde{S}^{N,i_{j_l}}_{g_{j_l}} \widetilde{S}^{N,i_{k_l}}_{g_{k_l}})$  equals
\begin{displaymath}
  \begin{cases}
    \frac{1}{|G_N|}\E \xi^2 = 0  & \mbox{if } i_{j_l} = i_{k_l}\;\textrm{and $i_{j_l}$ is odd} \ \\
    \frac{1}{|G_N|}\E (\overline{\xi})^2 = 0& \mbox{if } i_{j_l} = i_{k_l} \; \textrm{and $i_{j_l}$ is even}  \\
    \frac{1}{|G_N|}\E |\xi|^2 = \frac{1}{|G_N|}& \mbox{if } |i_{j_l} - i_{k_l}| = 1 \; \textrm{and $\min\{i_{j_l},i_{k_l}\}$ is odd}  \\
    0 &\ \mbox{otherwise},
    \end{cases}
\end{displaymath}
where in the last case we used again independence (the Reader may easily verify that this case reduces to subcases in which one of the expectations $\E \xi \zeta$, $\E \bar{\xi} \zeta$, $\E \bar{\xi} \bar{\zeta}$ appears, where $\zeta$ is an independent copy of $\xi$; the precise form of the expectation depends on the parity of $i_{j_l}$ and $i_{k_l}$).
Define now $\mathcal{C}_N$ as the set of  all sequences $(g_l)_{l=1}^n$ in which every element $h \in G_N$ either appears twice or does not appear at all, and set
\begin{displaymath}
  M_N' =  \sum_{(g_1,\ldots g_{n}) \in \mathcal{C}_N} \frac{1}{d_N} (\E \prod_{l=1}^{n} \widetilde{S}^{N,i_l}_{g_l})\tr \prod_{l=1}^{2m} \Lambda_N(g_l,i_l).
\end{displaymath}
The above discussion shows that $M_N = M_N' + o(1)$ and $M_N'$ indeed does not depend on $\xi$ (satisfying the assumptions of the lemma). This ends the proof of \eqref{eq:high-dim-epsilon}.

To conclude the proof of the lemma fix $\varepsilon > 0$ and let $N_0$ be such that \eqref{eq:high-dim-epsilon} holds for all $N>N_0$. Using the fact that $\mu_{G_N}$ is a probability measure we obtain that for such $N$,
\begin{displaymath}
  \Big|\sum_{\Lambda \in \widehat{G_N}} \mu_{G_N}(\Lambda) \frac{1}{\dim \Lambda} \Big(\E \tr \widehat{S^{N,i_1}}(\Lambda)\cdots \widehat{S^{N,i_n}}(\Lambda) - \E \tr \widehat{Z^{N,i_1}}(\Lambda)\cdots \widehat{Z^{N,i_n}}(\Lambda)\Big)\Big| \le \varepsilon,
\end{displaymath}
which proves \eqref{eq:goal-goal} and ends the proof of the lemma.

\end{proof}

\section{Central Limit Theorem for the Gaussian case}\label{sec:CLT}

In this section we will consider Central Limit Theorems for linear eigenvalue statistics of $\frac{1}{|G_N|} P_{X^N}P_{X^N}^\ast$ where $X^N = (X^N_g)_{g \in G_N}$ is a family of standard complex Gaussian variables. Contrary to the previous sections, we will consider squares of singular values of $P_{X^N}$ rather then the singular values themselves. We choose this version since it is more common in the literature concerning Central Limit Theorems  and the formulation of results is less involved. As a starting point, let us recall the well known Central Limit Theorem for linear eigenvalue statistics of the Laguerre ensemble (see e.g. \cite[Theorem 7.3.1. and Remark 7.3.2]{MR2808038}).

\begin{theorem}\label{thm:CLT-Laguerre}
  Let $f\colon \R \to \R$ be a $C^1$, Lipschitz function and let $\Gamma_n$ be an $n\times n$ complex Ginibre matrix (normalized to have variance of entries equal to $1/n$). Then, as $n \to \infty$, the random variable
  \begin{displaymath}
    Z_n = \tr f(\Gamma_n \Gamma_n^\ast) - \E \tr f(\Gamma_n \Gamma_n^\ast)
  \end{displaymath}
  converges in distribution to the Gaussian variable with variance
  \begin{align}\label{eq:V-infinity}
    V_\infty = \frac{1}{4\pi^2} \int_0^4 \int_0^4 \Big(\frac{f(\lambda_1)- f(\lambda_2)}{\lambda_1 - \lambda_2}\Big)^2 \frac{4-(\lambda_1-2)(\lambda_2-2)}{\sqrt{4- (\lambda_1-2)^2}\sqrt{4 - (\lambda_2-2)^2}}d\lambda_1d\lambda_2.
  \end{align}
\end{theorem}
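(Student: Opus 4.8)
The plan is to exploit the exact determinantal structure of the eigenvalue process, since in the complex Gaussian case everything is explicit and no universality input is required. First I would note that $n\Gamma_n\Gamma_n^\ast = A_nA_n^\ast$ is a square complex Wishart (Laguerre) matrix, so the eigenvalues $y_1,\dots,y_n$ of $A_nA_n^\ast$ form a determinantal point process on $[0,\infty)$ with joint density proportional to $\prod_{i<j}(y_i-y_j)^2\prod_i e^{-y_i}$, and its correlation kernel is the Christoffel--Darboux kernel
\begin{displaymath}
  K_n(x,y)=\sum_{l=0}^{n-1}\mathcal L_l(x)\mathcal L_l(y)\,e^{-(x+y)/2}
\end{displaymath}
of the Laguerre polynomials $\mathcal L_l$ introduced above, i.e. the kernel of the orthogonal projection onto $\mathrm{span}\{\mathcal L_0 e^{-\,\cdot\,/2},\dots,\mathcal L_{n-1}e^{-\,\cdot\,/2}\}$ in $L^2([0,\infty))$. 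Rescaling via $x\mapsto x/n$, the eigenvalues of $\Gamma_n\Gamma_n^\ast$ form the determinantal process with the still-projection kernel $\widetilde K_n(u,v)=nK_n(nu,nv)$, whose one-point function $\widetilde K_n(u,u)$ behaves like $n\rho(u)$ with $\rho(u)=\frac{1}{2\pi u}\sqrt{u(4-u)}\,\ind{[0,4]}(u)$ the Marchenko--Pastur density (consistent with Remark~\ref{rem:rho-and-theta-explained}).

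Next I would invoke the classical algebraic identities for linear statistics of a determinantal process with a Hermitian projection kernel. For the variance they give
\begin{displaymath}
  \Var\bigl(\tr f(\Gamma_n\Gamma_n^\ast)\bigr)=\tfrac12\int\!\!\int\bigl(f(u)-f(v)\bigr)^2\,\widetilde K_n(u,v)^2\,du\,dv,
\end{displaymath}
and, upon expanding $\log\det\bigl(I+(e^{tf}-1)\widetilde K_n\bigr)$ in powers of $t$, analogous finite alternating sums of traces of $f\widetilde K_n$ and its powers for every higher cumulant of $\tr f(\Gamma_n\Gamma_n^\ast)$. The first step of the analysis is then to pass to the limit in the variance integral: I would use the bulk (sine-kernel) asymptotics of the Laguerre kernel on compact subsets of $(0,4)$ to compute the limit of the bulk--bulk part, and separately estimate the contributions coming from the hard edge at $0$ and the soft edge at $4$, which are negligible because $f$ is Lipschitz and these regions are microscopically thin on the scale set by the kernel; after the substitution $\lambda=2+2\cos\theta$ the resulting limit is identified with the double integral in \eqref{eq:V-infinity}. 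This is the standard computation of the limiting variance of linear eigenvalue statistics for a one-cut unitary ensemble supported on $[0,4]$.

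The second step is to show that every cumulant of $Z_n$ of order $m\ge3$ tends to $0$. Here I would again use the kernel asymptotics: each trace appearing in the cumulant formula differs from $\tr(f^k\widetilde K_n)=\int f(u)^k\widetilde K_n(u,u)\,du=O(n)$ only through commutator terms $[f,\widetilde K_n]$, which are small in the relevant operator norms because $f$ has bounded derivative and $\widetilde K_n$ has correlation length of order $1/n$ in the bulk; the combinatorial cancellations built into the cumulant expansion (the Costin--Lebowitz/Soshnikov mechanism) then leave a finite limit for $m=2$ and $0$ for $m\ge3$. Since after centering all cumulants of $Z_n$ converge to those of $\mathcal N(0,V_\infty)$, and a Gaussian law is determined by its moments, $Z_n$ converges in distribution to $\mathcal N(0,V_\infty)$. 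If it turns out more convenient to carry out the cumulant computation first for polynomial or Schwartz-class $f$, I would then extend to all $C^1$ Lipschitz $f$ by approximation, using that $f\mapsto V_\infty$ is continuous and that $\sup_n\Var(\tr f(\Gamma_n\Gamma_n^\ast))$ is controlled in the same norm through the variance identity above. (An alternative to this whole route would be the Bai--Silverstein resolvent/martingale method, which also applies but yields $V_\infty$ in a less transparent Stieltjes-transform form.)

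I expect the main obstacle to be the precise, locally uniform control of the rescaled Laguerre kernel $\widetilde K_n$ --- in particular matching the bulk regime to the hard edge at $0$ and the soft edge at $4$ --- that is needed to pass to the limit both in the variance integral and in the higher-cumulant traces; once these asymptotics are in hand, identifying the limiting variance with \eqref{eq:V-infinity} and killing the cumulants of order $\ge3$ are routine. Of course the statement is entirely classical, so one could also simply quote \cite[Theorem~7.3.1 and Remark~7.3.2]{MR2808038}.
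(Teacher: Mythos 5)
You should first note that the paper does not prove Theorem \ref{thm:CLT-Laguerre} at all: it is imported as a classical result with the citation \cite[Theorem 7.3.1 and Remark 7.3.2]{MR2808038}, so the closing sentence of your proposal (``simply quote Pastur--Shcherbina'') is literally what the paper does. Your actual sketch is therefore a genuinely different route from the cited proof: you exploit the determinantal (LUE) structure, the projection-kernel variance identity and the Fredholm-determinant cumulant expansion, whereas the proof behind the citation is based on Gaussian tools (integration by parts, Poincar\'e-type bounds, resolvent/martingale analysis), which is also the technology the paper itself borrows in Lemma \ref{le:4th-moment}. The trade-off is clear: your route is tied to the exactly solvable Gaussian case but gives the variance \eqref{eq:V-infinity} very explicitly; the cited route extends to non-Gaussian entries, which is irrelevant here since the theorem is only used for Ginibre matrices.

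Two steps of your sketch are stated too loosely to stand as written. First, for the variance limit the microscopic sine-kernel regime is not what you need: the double integral $\tfrac12\iint (f(u)-f(v))^2\widetilde K_n(u,v)^2\,du\,dv$ picks up its main contribution from $u,v$ at \emph{macroscopic} separation, so you need the Plancherel--Rotach asymptotics of the Christoffel--Darboux kernel off the diagonal (whose squared, oscillation-averaged form produces the density $\frac{4-(\lambda_1-2)(\lambda_2-2)}{2\pi^2(\lambda_1-\lambda_2)^2\sqrt{4-(\lambda_1-2)^2}\sqrt{4-(\lambda_2-2)^2}}$ appearing in \eqref{eq:V-infinity}), plus edge control. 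Second, and more seriously, the Costin--Lebowitz/Soshnikov mechanism you invoke for killing cumulants of order $\ge 3$ is designed for statistics whose variance diverges, while here $\Var(Z_n)\to V_\infty<\infty$; moreover the claim that the commutator $[f,\widetilde K_n]$ is ``small in the relevant operator norms'' cannot be right as stated, since $\|[f,\widetilde K_n]\|_{HS}^2$ is exactly of the order of the variance, hence $O(1)$. What actually makes the higher cumulants vanish is the finer cancellation for smooth statistics (Soshnikov's combinatorial identity, or strong Szeg\H{o}-type trace asymptotics bounding the $m$-th cumulant by higher Schatten norms of the commutator), and this is the one step of your plan that genuinely needs to be repaired or replaced --- e.g.\ by falling back on the reference the paper uses.
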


The main result of this section is
\begin{theorem}\label{thm:CLT-Gaussian}
 Let $G_N$ be a sequence of finite groups with $|G_N|\to \infty$ and let $X^N = (X^N_g)_{g \in G_N}$ be i.i.d. standard Gaussian variables and $P_{X^N}$ be the corresponding random convolution operator. Assume that the measures $\widetilde{\mu}_N$ converge weakly to some probability measure $\mu$ on $\overline{\Z}_+$. Let $f \colon \R \to \R$ be a $C^1$, Lipschitz function. For $n \in \Z_+$ define
 \begin{align}\label{eq:V-n}
   V_n = \Var (\tr f(\Gamma_n\Gamma_n^\ast)),
 \end{align}
 where $\Gamma_n$ is an $n\times n$ Ginibre matrix. Define also the random variable
 \begin{displaymath}
 S_N = \frac{1}{\sqrt{|G_N|}}\tr f\Big(\frac{1}{|G_N|}P_{X^N}P_{X^N}^\ast\Big).
 \end{displaymath}
 Then, as $N \to \infty$,
 the random variable
 \begin{displaymath}
   \widetilde{S}_N = S_N-\E S_N
 \end{displaymath}
 converges in distribution to the Gaussian variable with mean zero and variance
 \begin{displaymath}
   \sigma^2 = \sum_{n \in \overline{\Z}_+} \mu(n) V_n.
 \end{displaymath}
\end{theorem}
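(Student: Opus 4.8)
The plan is to use the block decomposition from Corollary~\ref{cor:deterministic-spectrum-explained} and Proposition~\ref{prop:Gaussian-distribution} to rewrite $\widetilde S_N$ as a sum of independent random variables, and then run a characteristic function argument of Lindeberg flavour, keeping in mind that — unlike in a genuine Lindeberg situation — the contributions of the high-dimensional representations need not be negligible but are only asymptotically Gaussian. Concretely, put $n_\Lambda=\dim\Lambda$ and $a_\Lambda=n_\Lambda/\sqrt{|G_N|}$ for $\Lambda\in\widehat{G_N}$, so that $a_\Lambda\le1$ and $\sum_\Lambda a_\Lambda^2=\sum_\Lambda n_\Lambda^2/|G_N|=1$. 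By Corollary~\ref{cor:deterministic-spectrum-explained} the eigenvalues of $\frac1{|G_N|}P_{X^N}P_{X^N}^\ast$ are those of $\frac1{|G_N|}\widehat{X^N}(\Lambda)\widehat{X^N}(\Lambda)^\ast$, $\Lambda\in\widehat{G_N}$, each with multiplicity $n_\Lambda$, and by Proposition~\ref{prop:Gaussian-distribution} the matrices $\frac1{\sqrt{|G_N|}}\widehat{X^N}(\Lambda)$ are independent, with $\frac1{\sqrt{|G_N|}}\widehat{X^N}(\Lambda)$ distributed as an $n_\Lambda\times n_\Lambda$ Ginibre matrix $\Gamma_{n_\Lambda}$. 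Hence $\widetilde S_N$ has the same distribution as $\sum_{\Lambda\in\widehat{G_N}}a_\Lambda\widetilde W_\Lambda$, where the $\widetilde W_\Lambda:=\tr f(\Gamma_{n_\Lambda}\Gamma_{n_\Lambda}^\ast)-\E\tr f(\Gamma_{n_\Lambda}\Gamma_{n_\Lambda}^\ast)$ are independent, centered, have distribution depending on $\Lambda$ only through $n_\Lambda$, and satisfy $\Var\widetilde W_\Lambda=V_{n_\Lambda}$ with $V_n$ as in~\eqref{eq:V-n}.

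The only facts about the Laguerre ensemble beyond Theorem~\ref{thm:CLT-Laguerre} that I would need are the a~priori bounds $V^\ast:=\sup_{n}V_n<\infty$ and $W^{(3)}:=\sup_n\E|\widetilde W_n|^3<\infty$, together with $V_n\to V_\infty$ as $n\to\infty$ (so that the function $\mathcal V\colon\overline{\Z}_+\to\R$, $\mathcal V(n)=V_n$, $\mathcal V(\infty)=V_\infty$, is bounded and continuous). I expect this to be the main obstacle: the crude route via Gaussian concentration gives only $\Var\bigl(\tr f(\Gamma_n\Gamma_n^\ast)\bigr)=O(n)$, which is useless, and one really has to invoke the $O(1)$ bound for linear statistics of the (determinantal) Laguerre ensemble with a merely Lipschitz test function, e.g. through the variance formula $\Var\le\tfrac12\mathrm{Lip}(f)^2\iint(\lambda_1-\lambda_2)^2K_n(\lambda_1,\lambda_2)^2\,d\lambda_1d\lambda_2=O(1)$ and its analogue for the third moment, with $V_n\to V_\infty$ then following from Theorem~\ref{thm:CLT-Laguerre} and uniform integrability. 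I would either quote these precisely from the literature on fluctuations of Wishart/Laguerre matrices or record them in a short appendix lemma.

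Granting this, the rest is soft. Using~\eqref{eq:mu-tilde}, $\sum_{\Lambda:\,n_\Lambda=n}a_\Lambda^2=\widetilde\mu_{G_N}(\{n\})$, so $\Var\widetilde S_N=\sum_\Lambda a_\Lambda^2V_{n_\Lambda}=\int_{\overline{\Z}_+}\mathcal V\,d\widetilde\mu_{G_N}\to\int_{\overline{\Z}_+}\mathcal V\,d\mu=\sum_{n\in\overline{\Z}_+}\mu(n)V_n=\sigma^2$, by weak convergence of $\widetilde\mu_{G_N}$ and boundedness and continuity of $\mathcal V$. Next I would compare $\phi_N(t):=\E e^{\sqrt{-1}t\widetilde S_N}=\prod_\Lambda\psi_{n_\Lambda}(ta_\Lambda)$, where $\psi_n$ is the characteristic function of $\widetilde W_n$, with the Gaussian surrogate $\phi_N^{(G)}(t):=\prod_\Lambda e^{-t^2a_\Lambda^2V_{n_\Lambda}/2}=e^{-t^2\Var\widetilde S_N/2}$; since all factors have modulus $\le1$, $|\phi_N(t)-\phi_N^{(G)}(t)|\le\sum_\Lambda|\psi_{n_\Lambda}(ta_\Lambda)-e^{-t^2a_\Lambda^2V_{n_\Lambda}/2}|$. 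For each summand there are two estimates available: a Taylor bound $|\psi_n(s)-e^{-s^2V_n/2}|\le\tfrac16|s|^3\E|\widetilde W_n|^3+\tfrac18 s^4V_n^2\le C(t)a_\Lambda^3$ whenever $|s|=|t|a_\Lambda\le|t|$ (with $C(t)$ depending only on $t$, $W^{(3)}$, $V^\ast$); and, for $n_\Lambda$ large, a bound $\eta$ coming from the uniform convergence on $[-|t|,|t|]$ of $\psi_n$ and of $e^{-s^2V_n/2}$ to $e^{-s^2V_\infty/2}$ — the former by Theorem~\ref{thm:CLT-Laguerre} (the $\psi_n$ being equi-Lipschitz there with constant $\sqrt{V^\ast}$), the latter by $V_n\to V_\infty$. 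Splitting $\widehat{G_N}$ into $\{a_\Lambda<\rho\}$ and $\{a_\Lambda\ge\rho\}$: the first set contributes at most $C(t)\rho\sum_\Lambda a_\Lambda^2=C(t)\rho$; the second set has at most $\rho^{-2}$ members (as $\sum_\Lambda a_\Lambda^2=1$), each with $n_\Lambda=a_\Lambda\sqrt{|G_N|}\ge\rho\sqrt{|G_N|}$, hence larger than the threshold $n_0$ once $N$ is large, so it contributes at most $\rho^{-2}\eta$.

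To finish, fix $t$ and $\zeta>0$, choose $\rho$ with $C(t)\rho<\zeta/2$, then $\eta$ with $\rho^{-2}\eta<\zeta/2$ (which fixes the threshold $n_0$ above), then $N$ large enough that $\rho\sqrt{|G_N|}>n_0$; this gives $|\phi_N(t)-\phi_N^{(G)}(t)|<\zeta$, so $\phi_N(t)\to\lim_N\phi_N^{(G)}(t)=e^{-\sigma^2t^2/2}$ for every $t\in\R$, and Lévy's continuity theorem yields that $\widetilde S_N$ converges in distribution to $\mathcal N(0,\sigma^2)$. The one delicate point, to reiterate, is the uniform-in-$n$ control of the variance and third moment of the Laguerre linear statistic $\tr f(\Gamma_n\Gamma_n^\ast)$ for Lipschitz $f$, together with $V_n\to V_\infty$; everything else is routine.
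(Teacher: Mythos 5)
Your overall architecture coincides with the paper's proof: the paper also uses Proposition \ref{prop:Gaussian-distribution} to realize $\widetilde S_N$ as the independent sum $\sum_{\Lambda}\frac{\dim\Lambda}{\sqrt{|G_N|}}\bigl(\tr f(\Gamma_\Lambda\Gamma_\Lambda^\ast)-\E\tr f(\Gamma_\Lambda\Gamma_\Lambda^\ast)\bigr)$, splits $\widehat{G}_N$ according to whether $\dim\Lambda\ge\varepsilon\sqrt{|G_N|}$ (at most $\varepsilon^{-2}$ such blocks, handled by Theorem \ref{thm:CLT-Laguerre}) or not (handled by the Lindeberg CLT), and your Taylor comparison of $\prod_\Lambda\psi_{n_\Lambda}(ta_\Lambda)$ with the Gaussian surrogate is a hand-rolled version of exactly that split; this part of your argument is correct.

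The genuine gap is the package of inputs you defer: $V^\ast=\sup_nV_n<\infty$, $W^{(3)}=\sup_n\E|\widetilde W_n|^3<\infty$, and $V_n\to V_\infty$. None of these is contained in Theorem \ref{thm:CLT-Laguerre}, and your plan for supplying them is partly based on a false premise and partly incomplete. False premise: the claim that ``Gaussian concentration gives only $O(n)$'' is wrong. The Gaussian vector $\sqrt n\,\Gamma_n$ satisfies the Poincar\'e inequality with constant $O(1)$, hence $\Gamma_n$ with constant $O(1/n)$; composing with the $1$-Lipschitz singular-value map (Hoffman--Wielandt) and noting that the gradient of $\lambda\mapsto\sum_i f(\lambda_i^2)$ has Euclidean norm at most $2\|f'\|_\infty\bigl(\sum_i\lambda_i^2\bigr)^{1/2}$, whose square has expectation $\E\|\Gamma_n\|_{HS}^2=n$, the two factors of $n$ cancel and one gets $V_n\le C\|f'\|_\infty^2$ and, via the $L^4$ form of the Poincar\'e inequality, $\E|\widetilde W_n|^4\le C\|f'\|_\infty^4$ uniformly in $n$. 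This is precisely the paper's Lemma \ref{le:4th-moment}; it gives your $V^\ast$, your $W^{(3)}$ (by H\"older), and, combined with Theorem \ref{thm:CLT-Laguerre} and uniform integrability, the convergence $V_n\to V_\infty$ (Corollary \ref{cor:convergence-of-variances}). Incompleteness: the determinantal substitute you sketch is plausible for the variance, but there is no analogous kernel formula for the absolute third moment $\E|\widetilde W_n|^3$, so ``its analogue for the third moment'' is not something you can simply quote; to control it you would in effect have to prove a uniform higher-moment bound anyway, i.e.\ the very lemma you dismissed. With Lemma \ref{le:4th-moment} (or an equivalent uniform fourth-moment bound) inserted, your proof closes and is essentially the paper's.
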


\begin{remark} If $\mu$ is the Dirac's mass at infinity we get the same limiting distribution as in Theorem \ref{thm:CLT-Laguerre}. Note however that in the case of random convolutions we need to normalize the linear eigenvalue statistic with the usual CLT normalization, which is in contrast with the behavior for sample covariance matrices.
\end{remark}

To prove Theorem \ref{thm:CLT-Gaussian} we will need a simple lemma, which is a part of folklore. However, since we were not able to find it in the literature, we provide the proof. We note that it relies on a version of the Poincar\'e inequality for Gaussian measures which is an important ingredient in the proof of the CLT for sample covariance matrices (Theorem \ref{thm:CLT-Laguerre}).

\begin{lemma}\label{le:4th-moment}
Under the notation of Theorem \ref{thm:CLT-Laguerre}, there exists a universal constant $C$, such that for all $n \in \overline{\Z}_+$,
\begin{displaymath}
  \E |Z_n|^4 \le C \sup_{x \in \R} |f'(x)|^4.
\end{displaymath}
\end{lemma}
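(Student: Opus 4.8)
The plan is to establish a fourth-moment bound for $Z_n = \tr f(\Gamma_n\Gamma_n^\ast) - \E\tr f(\Gamma_n\Gamma_n^\ast)$ that is uniform in $n \in \overline{\Z}_+$, using a Gaussian concentration/Poincar\'e-type argument. First I would treat the case $n \in \Z_+$. View $\Gamma_n$ as a point in $\C^{n^2} \simeq \R^{2n^2}$ with i.i.d.\ real Gaussian coordinates of variance $1/(2n)$, and consider the function $F(\Gamma_n) = \tr f(\Gamma_n\Gamma_n^\ast)$. The key observation is that $F$ is Lipschitz with respect to the Euclidean (Hilbert--Schmidt) metric with a constant controlled by $\sup_x |f'(x)|$ and \emph{not} growing with $n$: indeed, for $A, B$ Hermitian one has $|\tr f(A) - \tr f(B)| \le \sup_x|f'(x)| \cdot \|A - B\|_1 \le \sup_x|f'(x)|\sqrt{n}\,\|A-B\|_{HS}$, and $\|\Gamma\Gamma^\ast - \Gamma'\Gamma'^\ast\|_{HS} \le (\|\Gamma\|_{op}+\|\Gamma'\|_{op})\|\Gamma-\Gamma'\|_{HS}$. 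The factor $\sqrt{n}$ from the trace inequality is compensated by the variance $1/(2n)$ of the Gaussian coordinates once one rescales, and the operator norms are $O(1)$ with overwhelming probability; after a careful bookkeeping (or by working directly with the smooth Gaussian integration-by-parts / Poincar\'e inequality $\Var(G) \le \frac{1}{2n}\E\|\nabla G\|^2$ applied to $G = F$ and to $G = |Z_n|^2$) one obtains $\E|Z_n|^4 \le C(\sup_x|f'(x)|)^4$ with $C$ absolute.

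The cleanest route, which I would adopt, is to invoke the Gaussian Poincar\'e inequality twice. Applying it to $F$ directly gives $\E|Z_n|^2 = \Var(F) \le \frac{1}{2n}\E\|\nabla F\|^2$. A computation of $\nabla F$ — using $\partial_{\Gamma_{ij}} \tr f(\Gamma\Gamma^\ast) = (f'(\Gamma\Gamma^\ast)\Gamma)_{ij}$ and its conjugate — yields $\|\nabla F\|^2 \le c\, n \,\|f'(\Gamma\Gamma^\ast)\Gamma\|_{HS}^2 \le c\, n \sup_x|f'(x)|^2 \tr(\Gamma\Gamma^\ast)$, and since $\E\tr(\Gamma_n\Gamma_n^\ast) = n$, we get $\E|Z_n|^2 \le C\sup_x|f'(x)|^2$. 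For the fourth moment, apply the Poincar\'e inequality to $G = |Z_n|^2 = Z_n^2$ (note $Z_n$ is real here): $\Var(Z_n^2) \le \frac{1}{2n}\E\|\nabla(Z_n^2)\|^2 = \frac{2}{n}\E\big(Z_n^2\|\nabla F\|^2\big)$. By Cauchy--Schwarz this is $\le \frac{2}{n}(\E Z_n^4)^{1/2}(\E\|\nabla F\|^4)^{1/2}$, and a moment bound on $\|\nabla F\|^4 \le c^2 n^2\sup_x|f'|^4 (\tr\Gamma\Gamma^\ast)^2$ together with $\E(\tr\Gamma_n\Gamma_n^\ast)^2 = O(n^2)$ gives $\frac{1}{n}\E\|\nabla F\|^4 \le C' n^3 \sup_x|f'|^4$ — this naive bound is too weak, so instead I would bound $\E(Z_n^2\|\nabla F\|^2)$ more carefully: split $\|\nabla F\|^2 \le cn\sup|f'|^2(\E\tr\Gamma\Gamma^\ast + (\tr\Gamma\Gamma^\ast - n))$ and use that the fluctuation $\tr\Gamma_n\Gamma_n^\ast - n$ has variance $O(1)$ (again by Poincar\'e, since $\tr\Gamma\Gamma^\ast$ is $O(\sqrt{n})$-Lipschitz times coordinate variance $1/n$). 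Then $\E(Z_n^2\|\nabla F\|^2) \le c\sup|f'|^2\big(n\E Z_n^2 + \sqrt{n}\,(\E Z_n^4)^{1/2}\cdot O(1)\big)$, so $\E Z_n^4 = \Var(Z_n^2) + (\E Z_n^2)^2 \le C\sup|f'|^4 + \frac{c\sup|f'|^2}{\sqrt n}(\E Z_n^4)^{1/2}$; a standard quadratic-in-$(\E Z_n^4)^{1/2}$ argument closes the bound with an absolute constant.

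Finally, for the case $n = \infty$, $Z_\infty$ is the limiting Gaussian variable of Theorem \ref{thm:CLT-Laguerre} with variance $V_\infty$; since Gaussian variables satisfy $\E|Z|^4 = 3(\Var Z)^2$, it suffices to bound $V_\infty \le C\sup_x|f'(x)|^2$, which is immediate from the explicit formula \eqref{eq:V-infinity}: the factor $\big(\tfrac{f(\lambda_1)-f(\lambda_2)}{\lambda_1-\lambda_2}\big)^2 \le \sup_x|f'(x)|^2$ pulls out, and the remaining integral $\frac{1}{4\pi^2}\int_0^4\int_0^4 \frac{4-(\lambda_1-2)(\lambda_2-2)}{\sqrt{4-(\lambda_1-2)^2}\sqrt{4-(\lambda_2-2)^2}}d\lambda_1 d\lambda_2$ is a finite absolute constant. (Alternatively, since $Z_n \Rightarrow Z_\infty$ and the fourth moments are uniformly bounded by the finite-$n$ estimate, $\E|Z_\infty|^4 \le \liminf \E|Z_n|^4$ by Fatou, giving the bound for $n = \infty$ for free.)

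The main obstacle I anticipate is the bookkeeping in the fourth-moment step: the Poincar\'e inequality applied to $Z_n^2$ produces a term $\E(Z_n^2\|\nabla F\|^2)$ in which $\|\nabla F\|^2$ naively carries a factor of $n$, and one must exploit the cancellation $\E\tr\Gamma_n\Gamma_n^\ast = n$ and the $O(1)$ variance of $\tr\Gamma_n\Gamma_n^\ast$ to absorb this $n$, rather than bounding $\|\nabla F\|^2$ crudely. Getting the dependence on $\sup_x|f'(x)|$ exactly to the fourth power (as opposed to picking up spurious lower-order terms) requires keeping the Lipschitz estimates homogeneous throughout. Everything else — the Gaussian integration by parts, the operator-norm tail bounds, the passage to $n=\infty$ — is routine.
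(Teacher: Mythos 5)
Your argument is sound and reaches the right bound, but it takes a genuinely different route from the paper. The paper also starts from the Gaussian Poincar\'e inequality, but it first transfers it to the vector $\lambda=(\lambda_1,\dots,\lambda_n)$ of singular values via the Hoffman--Wielandt inequality (the map $\Gamma_n\mapsto\lambda$ is $1$-Lipschitz), so that $\lambda$ satisfies Poincar\'e with constant $K/n$, and then invokes, as a black box, the known fact that a Poincar\'e inequality self-improves to a fourth-moment bound $\E|g(\lambda)-\E g(\lambda)|^4\le (K'/n^2)\,\E|\nabla g(\lambda)|^4$ (citing Milman). Applying this once to $g(\lambda)=\sum_i f(\lambda_i^2)$, whose gradient is bounded by $2\|f'\|_\infty\bigl(\sum_i\lambda_i^2\bigr)^{1/2}$, and controlling $\E\bigl(\sum_i\lambda_i^2\bigr)^2\lesssim n^2$ via the Lipschitzness of the Euclidean norm and $\E\|\Gamma_n\|_{HS}^2=n$, finishes the proof in one shot. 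You instead stay at the level of matrix entries and iterate Poincar\'e twice (once for $\E Z_n^2$, once for $\Var(Z_n^2)$), closing a self-bounding quadratic inequality in $(\E Z_n^4)^{1/2}$; this is legitimate (note $\E Z_n^4<\infty$ a priori since $f$ is Lipschitz, so the bootstrap is allowed), avoids quoting the $L^4$-Poincar\'e implication, and your explicit treatment of $n=\infty$ (via $\E|Z|^4=3(\Var Z)^2$ and the finiteness of the double integral in \eqref{eq:V-infinity}, or Fatou) is if anything more careful than the paper, which handles only finite $n$ explicitly.

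One bookkeeping slip is worth fixing: the gradient bound should read $\|\nabla F\|^2\le c\,\|f'(\Gamma\Gamma^\ast)\Gamma\|_{HS}^2\le c\,\|f'\|_\infty^2\,\tr(\Gamma\Gamma^\ast)$, with no factor of $n$; your stated $\|\nabla F\|^2\le c\,n\,\|f'(\Gamma\Gamma^\ast)\Gamma\|_{HS}^2$ is off by $n$ and is in fact inconsistent with your own conclusion $\E Z_n^2\le C\|f'\|_\infty^2$. This spurious $n$ is also what makes you believe the naive Cauchy--Schwarz step is too weak: with the correct gradient bound one has $(\E\|\nabla F\|^4)^{1/2}\le c\,\|f'\|_\infty^2\bigl(\E(\tr\Gamma\Gamma^\ast)^2\bigr)^{1/2}\le c'\,n\,\|f'\|_\infty^2$, so $\Var(Z_n^2)\le \frac{2}{n}(\E Z_n^4)^{1/2}(\E\|\nabla F\|^4)^{1/2}\le C\|f'\|_\infty^2(\E Z_n^4)^{1/2}$ and the quadratic argument already closes, making the finer splitting into $\E\tr\Gamma\Gamma^\ast$ plus its $O(1)$ fluctuation unnecessary (though that refinement is also correct, up to the stray powers of $\sqrt{n}$ in your intermediate displays). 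None of this affects the validity of the approach; it only simplifies it.
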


Before we present the proof of the above lemma, let us state an immediate corollary to it and Theorem \ref{thm:CLT-Laguerre}.
\begin{corollary}\label{cor:convergence-of-variances}
Let $V_\infty$ and $V_n$ be defined by \eqref{eq:V-infinity} and \eqref{eq:V-n} respectively. Then
\begin{displaymath}
  \lim_{n\to \infty} V_n = V_\infty.
\end{displaymath}
\end{corollary}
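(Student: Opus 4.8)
\textbf{Proof proposal for Corollary \ref{cor:convergence-of-variances}.}
The plan is to upgrade the convergence in distribution supplied by Theorem \ref{thm:CLT-Laguerre} to convergence of second moments, using the fourth moment bound of Lemma \ref{le:4th-moment} to provide the missing uniform integrability. First I would observe that, since $Z_n = \tr f(\Gamma_n\Gamma_n^\ast) - \E \tr f(\Gamma_n\Gamma_n^\ast)$ is centered, we simply have $V_n = \Var(\tr f(\Gamma_n\Gamma_n^\ast)) = \E Z_n^2$, so the statement to be proved is exactly $\E Z_n^2 \to V_\infty$. By Theorem \ref{thm:CLT-Laguerre}, $Z_n$ converges in distribution to a centered Gaussian $Z$ with variance $V_\infty$, hence $Z_n^2$ converges in distribution to $Z^2$, whose expectation is $V_\infty$. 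Since $V_\infty < \infty$ for $C^1$ Lipschitz $f$ (this can be read off the integral formula \eqref{eq:V-infinity}, or deduced a posteriori from Lemma \ref{le:4th-moment} via Fatou applied along the weakly convergent sequence), it remains only to justify that one may pass to the limit under the expectation.

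For this I would invoke Lemma \ref{le:4th-moment}, which gives
\begin{displaymath}
  \sup_{n \in \overline{\Z}_+} \E (Z_n^2)^2 = \sup_{n \in \overline{\Z}_+} \E |Z_n|^4 \le C \sup_{x \in \R} |f'(x)|^4 < \infty.
\end{displaymath}
A uniform bound on the second moments of the nonnegative random variables $Z_n^2$ implies that the family $\{Z_n^2\}_n$ is uniformly integrable. Combined with the weak convergence $Z_n^2 \Rightarrow Z^2$, this yields convergence of the means, i.e. $\E Z_n^2 \to \E Z^2 = V_\infty$, which is the claim. (Concretely one can argue by truncation: for $K>0$ write $\E Z_n^2 = \E(Z_n^2 \wedge K) + \E(Z_n^2 - K)_+$; the first term converges to $\E(Z^2\wedge K)$ by boundedness and continuity of $x\mapsto x\wedge K$, while the second is bounded uniformly in $n$ by $K^{-1}\sup_n \E|Z_n|^4$ thanks to the above, and an analogous estimate controls the corresponding tail of $Z^2$; letting $n\to\infty$ and then $K\to\infty$ gives the result.)

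There is essentially no deep obstacle here: the only real point is the passage from distributional convergence to moment convergence, and this is precisely what the fourth moment estimate is designed to handle, so the main work has already been done in Lemma \ref{le:4th-moment}. The one place to be slightly careful is to make sure the argument is formulated so that it also covers the degenerate endpoint $n=\infty$ (where $Z_\infty = Z$ and $V_\infty = V_\infty$ trivially) and that no integrability of the limit is assumed that has not been established; both are handled by the Fatou remark above.
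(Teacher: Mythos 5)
Your proof is correct and follows exactly the route the paper intends: the corollary is stated there as an immediate consequence of Theorem \ref{thm:CLT-Laguerre} together with Lemma \ref{le:4th-moment}, i.e.\ convergence in distribution of $Z_n$ plus the uniform fourth-moment bound giving uniform integrability of $Z_n^2$, hence convergence of the second moments. Your truncation argument is a perfectly adequate way of spelling out this standard step.
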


\begin{proof}[Proof of Lemma \ref{le:4th-moment}]
Let $\lambda_1\le\ldots\le\lambda_n$ be the singular values of $\Gamma_n$. By the Hoffman-Wielandt inequality for singular values (see \cite[Theorem A.37 (ii)]{MR2567175}) the map $\Gamma_n \to (\lambda_1,\ldots,\lambda_n)$ is 1-Lipschitz (where the space of matrices is endowed with the Hilbert-Schmidt norm). Since $\sqrt{n}\Gamma_n$ can be seen as a Gaussian random vector in  dimension $2n^2$, with covariance matrix equal to one half of the identity, from the Poincar\'e inequality for the Gaussian measure  (see e.g. \cite[Theorem 3.20]{MR3185193}), the random vector $\lambda = (\lambda_1,\ldots,\lambda_n)$ satisfies the Poincar\'e inequality with constant $K/n$ for some universal constant $K$, i.e. for any smooth function $g\colon \R^n \to \R$,
\begin{displaymath}
  \Var g(\lambda) \le \frac{K}{n}\E |\nabla g(\lambda)|^2.
\end{displaymath}
It is well known (see e.g. \cite[Proposition 2.5 and Lemma 2.1]{MR2507637}) that this implies
\begin{displaymath}
  \E |g(\lambda)- \E g(\lambda)|^4 \le \frac{K'}{n^2}\E |\nabla g(\lambda)|^4
\end{displaymath}
for some constant $K'$, depending only on $K$.

Since $Z_n = \sum_{i=1}^n (f(\lambda_i^2) -\E f(\lambda_i^2))$, we obtain
\begin{displaymath}
\E |Z_n|^4 \le \frac{K'}{n^2} \E |\sum_{i=1}^n 4 f'(\lambda_i^2)^2 \lambda_i^2|^2 \le 16 \frac{K'}{n^2} \|f'\|_\infty^4 \E g(\lambda)^4,
\end{displaymath}
where $g(\lambda) = (\sum_{i=1}^n \lambda_i^2)^{1/2}$. The function $g$  is 1-Lipschitz, therefore
\begin{displaymath}
\E |g(\lambda) - \E g(\lambda)|^4 \le \frac{K'}{n^2}.
\end{displaymath}
Moreover $\E g(\lambda) \le (\sum_{i=1}^n \E \lambda_i^2)^{1/2} = (\E \|\Gamma_n \|_{HS}^2)^{1/2} = \sqrt{n}$.
Combining the last three estimates with the triangle inequality in $L_4$ we arrive at the assertion of the lemma.
\end{proof}

We are now ready for
\begin{proof}[Proof of Theorem \ref{thm:CLT-Gaussian}]
  By Proposition \ref{prop:Gaussian-distribution} the random variable $S_N$ has the same distribution as
  \begin{displaymath}
    \sum_{\Lambda \in \widehat{G}_N} \frac{\dim \Lambda}{\sqrt{|G_N|}} \tr f(\Gamma_\Lambda \Gamma_\Lambda^\ast),
  \end{displaymath}
  therefore in what follows we may and we will identify these variables.

In particular,  thanks to independence of $\Gamma_\Lambda$ and the definitions of measures $\mu_{G_N}$ and $\widetilde{\mu}_{G_N}$, we immediately get

\begin{align*}
\Var S_N = \sum_{n \in \Z_+} \widetilde{\mu}_{G_N}(n) V_n,
\end{align*}
and the convergence of $\widetilde{\mu}_{G_N}$ to $\mu$, together with Corollary \ref{cor:convergence-of-variances} imply that
\begin{align}\label{eq:variance-convergence}
  \lim_{N \to \infty} \Var S_N = \lim_{N \to \infty} \sum_{n \in \Z_+} \widetilde{\mu}_{G_N}(n) V_n = \sigma^2.
\end{align}

To shorten the notation denote
\begin{displaymath}
T_N(\Lambda)  = \frac{\dim \Lambda}{\sqrt{|G_N|}} \Big(\tr f(\Gamma_\Lambda \Gamma_\Lambda^\ast) - \E \tr f(\Gamma_\Lambda \Gamma_\Lambda^\ast)\Big).
\end{displaymath}

For fixed $\varepsilon > 0$ let $I_N(\varepsilon) = \{\Lambda \in \widehat{G}_N\colon \dim \Lambda \ge \varepsilon \sqrt{|G_N|}\}$. Note that the cardinality of
$I_N$ is at most $1/\varepsilon^2$. By Theorem \ref{thm:CLT-Laguerre}, we have
\begin{displaymath}
\E \exp\Big(\sqrt{-1}t \Big(\tr f(\Gamma_n \Gamma_n^\ast) - \E \tr f(\Gamma_n\Gamma_n^\ast)\Big)\Big) \to e^{-t^2V_\infty /2},
\end{displaymath}
uniformly with respect to $t$ on compact sets. By Corollary \ref{cor:convergence-of-variances} we also have
\begin{displaymath}
  e^{-t^2V_n/2} \to e^{-t^2V_\infty/2}
\end{displaymath}
uniformly over $t$.

Thus, using the estimate $|\prod_{i=1}^k a_i - \prod_{i=1}^k b_i| \le \sum_{i=1}^k |a_i-b_i|$ for $|a_i|,|b_i| \le 1$, we get
\begin{displaymath}
\Big|\E e^{\sqrt{-1} t \sum_{\Lambda \in I_N(\varepsilon)} T_N(\Lambda)} - e^{-2^{-1}t^2 \sum_{n \ge \varepsilon \sqrt{|G_N|}} \widetilde{\mu}_{G_N}(n) V_{n}}\Big| \to 0
\end{displaymath}
as $N \to \infty$ for any fixed $\varepsilon > 0$ (note that if $I_N(\varepsilon)$ is empty then the left-hand side above vanishes). One can therefore find a sequence $\varepsilon_N \to 0$, such that
\begin{align}\label{eq:CLT-large-irreps}
\Big|\E e^{\sqrt{-1} t \sum_{\Lambda \in I_N(\varepsilon_N)} T_N(\Lambda)} - e^{-2^{-1}t^2\sum_{n \ge \varepsilon_N \sqrt{|G_N|}} \widetilde{\mu}_{G_N}(n) V_{n}}\Big| \to 0.
\end{align}

We will now show that for \emph{every} sequence $\varepsilon_N \to 0$, and any $t \in \R$,

\begin{align}\label{eq:CLT-small-irreps}
\Big|\E e^{\sqrt{-1} t \sum_{\Lambda \notin I_N(\varepsilon_N)} T_N(\Lambda)} - e^{-2^{-1}t^2\sum_{n < \varepsilon_N \sqrt{|G_N|}} \widetilde{\mu}_{G_N}(n) V_{n}}\Big| \to 0.
\end{align}

This will end the proof of the theorem, because together with \eqref{eq:CLT-large-irreps} and independence of $\Gamma_\Lambda$, \eqref{eq:CLT-small-irreps} will yield
\begin{displaymath}
 \Big |\E e^{\sqrt{-1} t (S_N - \E S_N)} -  e^{-2^{-1}t^2\sum_{n\in \Z_+} \widetilde{\mu}_{G_N}(n)V_n}\Big| \to 0,
\end{displaymath}
which by \eqref{eq:variance-convergence} gives
\begin{displaymath}
  \E e^{\sqrt{-1} t (S_N - \E S_N)} = e^{-t^2\sigma^2/2}.
\end{displaymath}

To prove \eqref{eq:CLT-small-irreps} it is enough to show that it holds under an additional assumption that there exists a limit
\begin{align}\label{eq:technical-convergence-assumption}
  \gamma^2 = \lim_{N \to \infty} \sum_{n < \varepsilon_N\sqrt{|G_N|}} \widetilde{\mu}_{G_N}(n) V_n.
\end{align}
Indeed, the sequence $\sum_{n < \varepsilon_N \sqrt{|G_N|}} \widetilde{\mu}_{G_N}(n) V_n$ is bounded, so if \eqref{eq:CLT-small-irreps} holds under the additional assumption \eqref{eq:technical-convergence-assumption},
then from every subsequence of the original sequence
\begin{displaymath}
\Big|\E e^{\sqrt{-1} t \sum_{\Lambda \notin I_N(\varepsilon_N)} T_N(\Lambda)} - e^{-2^{-1}t^2\sum_{n < \varepsilon_N\sqrt{|G_N|}} \widetilde{\mu}_{G_N}(n) V_{n}}\Big|\end{displaymath}
one can choose another subsequence, which converges to zero. This implies \eqref{eq:CLT-small-irreps}.

We will thus assume \eqref{eq:technical-convergence-assumption} and show that
\begin{align}\label{eq:final-convergence}
\lim_{N\to \infty} \E e^{\sqrt{-1} t  \sum_{\Lambda \notin I_N(\varepsilon_N)} T_N(\Lambda)} = e^{-t^2\gamma^2/2},
\end{align}
which clearly implies \eqref{eq:CLT-small-irreps}.
The convergence \eqref{eq:final-convergence} is a consequence of just the usual Lindeberg Central Limit Theorem. Indeed, consider the triangular array
$(T_{N,\Lambda})_{N\in \overline{\Z}_+,\Lambda \in \widehat{G_N}\setminus I_N(\varepsilon_N)}$. We have $\E T_{N,\Lambda} = 0$, $\Var(\sum_{\Lambda \notin I_N(\varepsilon_N)} T_{N,\Lambda}) \to \gamma^2$ and it remains to check the Lindeberg condition. We have for any $\delta > 0$,
\begin{align*}
&Lind_N (\delta) :=\sum_{\Lambda \notin I_N(\varepsilon_N)} \E T_{N,\Lambda}^2\ind{\{|T_{N,\Lambda}|>\delta\}} \\
&=
\sum_{n <  \varepsilon_N \sqrt{|G_N|}} \widetilde{\mu}_{G_N}(n)\E |\tr f(\Gamma_n\Gamma_n^\ast) - \E \tr f(\Gamma_n\Gamma_n^\ast)|^2
\ind{\{n|G_N|^{-1/2}|\tr f(\Gamma_n\Gamma_n^\ast) - \E \tr f(\Gamma_n\Gamma_n^\ast)| > \delta\}}.
\end{align*}
We clearly have
\begin{align*}
Lind_N(\delta) \le & \frac{1}{\delta^2 } \sum_{n <  \varepsilon_N \sqrt{|G_N|}}\frac{n^2}{|G_N|} \widetilde{\mu}_{G_N}(n) \E |\tr f(\Gamma_n\Gamma_n^\ast) - \E \tr f(\Gamma_n\Gamma_n^\ast)|^4 \\
\le & \frac{1}{\delta^2 } \sum_{n <  \varepsilon_N \sqrt{|G_N|}}\frac{n^2}{|G_N|} \widetilde{\mu}_{G_N}(n) C\|f'\|_\infty^4\\
\le & \frac{C\|f'\|_\infty^4 \varepsilon_N^2}{\delta^2 } \stackrel{N\to \infty}{\to} 0,
\end{align*}
where in the second inequality we used Lemma \ref{le:4th-moment} and in the last one the fact that $\widetilde{\mu}_{G_N}$ is a probability measure.
This ends the proof of Theorem \ref{thm:CLT-Gaussian}.
\end{proof}

\section{Further questions}\label{sec:questions}

As shown in the preceding sections, random convolution operators on large finite groups exhibit a behavior, which combines features from the theory of large random matrices with independent entries, with those of classical probability theory. In our opinion, this is one of the reasons making this model interesting and worth investigating. Clearly there are multiple other aspects of the theory of random matrices, which we have not touched in the present work and which may potentially lead to interesting phenomena. Also, even the basic results obtained above give rise to new questions, both from the probabilistic and representation theory points of view. Below we list some of them.

\begin{enumerate}[(I)]
\item As already mentioned in Section \ref{sec:examples} we do not know how to characterize the class of measures $\mu$ on $\overline{\Z}_+$, which are weak limits of projected Plancherel measures. In fact the class of examples we presented contains only finite mixtures. It would be of interest to see whether one can obtain more complicated examples.
\medskip

\item While the limiting singular-value distribution follows a universal behaviour (as shown in Theorem \ref{thm:singular-values}), the eigenvalue distribution has been analysed in Theorem \ref{thm:eigenvalues} only for Gaussian matrices. It is natural to conjecture universality also in this case. For sequences of groups with representations of bounded degrees this can be done by following the ideas of Meckes (which correspond to the small-dimensional representations part of our proof). For general groups we encounter a similar problem of instability of the spectral measure as in the classical circular law, moreover the model of random convolutions involves less independence (it is defined in terms of a smaller number of independent variables) than Wigner-like matrices. On the one hand, due to additive structure (Fourier transforms are sums of randomly weighted unitary matrices) this resembles somehow the questions related to the analysis of directed $d$-regular graphs or sums of random permutation or random unitary matrices, which have attracted considerable attention in recent years and are known to be difficult (see \cite{MR3091727,2017arXiv170509053B, 2017arXiv170305839C}). On the other hand, the algebraic structure of the model, may give additional advantage in the proofs. In any case, it seems that the analysis should be related to the problem of bounding the smallest singular value of the matrices in question.

    Narrowing the focus from the most general case to a more specific one,  we can formulate the following question, for what we believe is the most interesting example:

    \medskip
    \emph{Is there universality of the limiting eigenvalue distribution for random convolution operators on the symmetric group $S_N$, with $N \to \infty$?}

\medskip
\item For Abelian groups, Meckes allows more general covariance structure that the one we dealt with, namely he considers the random variable $\xi$ in Theorems \ref{thm:singular-values}, \ref{thm:eigenvalues}, which satisfies $\E \xi^2 = \alpha$ for some $\alpha \in [0,1]$. This leads to interesting phenomena, since the limiting eigenvalue distribution turns out to be a mixture of Gaussian distributions,  governed by the limiting density of the set of elements of order two in $G_N$. More specifically under the assumption of the existence of the limit $p = \lim_{N\to \infty} \frac{|\{a\in G_N\colon a^2 = 1\}|}{|G_N|}$ Meckes proved that the limiting spectral distribution of $\frac{1}{\sqrt{|G_N|}}P_{X^N}$ equals $(1-p)\gamma_{0} + p\gamma_\alpha$, where $\gamma_{\alpha}$ is the centered Gaussian measure on $\C \simeq \R^2$ with the covariance matrix equal to
    \begin{displaymath}
      \frac{1}{2}\left[\begin{array}{cc}
        1+\alpha & 0  \\
        0 & 1-\alpha
      \end{array}\right]
    \end{displaymath}
    (in particular $\gamma_0$ is the standard complex Gaussian distribution and $\gamma_1$ the standard real Gaussian distribution).

    It would be interesting to go beyond the assumption $\E \xi^2 = 0$ also in the general non-Abelian case.
\medskip
\item One can also ask about extensions of the Central Limit Theorem to more general distributions of the entries. We refer to \cite{MR2567175, MR2561434, MR2808038} for results concerning limit theorems for sample covariance matrices based on rectangular matrices with general independent entries.
\medskip
\item Another natural question is the behaviour of the operator norm of random convolution operators. See \cite{MR2795050} for results concerning the operator norm in the case $G_N = \Z_N$.
\medskip
\item  One can also ask about the local behaviour in the bulk of the spectrum, especially in the case, when high-dimensional representations dominate. On the one hand one could expect a sine kernel type behaviour, as in Wigner matrices, on the other hand the fact that (at least in the Gaussian case) the spectrum receives contribution from many independent random matrices, some Poisson type behaviour may also be expected.

\item There are also many interesting aspects related to the study of convolutions with random class functions (i.e. functions constant on conjugacy classes of the group), which have been recently studied by M. Meckes \cite{MarkPersonalCommunication}.
    \end{enumerate}
\appendix

\section{Auxiliary lemmas on weak convergence in probability}

We gather here some standard facts concerning weak convergence in probability, which may however be difficult to find in the literature in the precise form needed for our purposes. Since the proofs rely on standard analytic arguments, we will present only their sketches.

\begin{lemma}\label{le:convergence-functions}
Let $\nu_N$ be a sequence of random Borel probability measures on $\C$ and let $\nu$ be a deterministic Borel probability measure on $\C$. Then the following statements are equivalent
\begin{itemize}
\item[(i)] $\nu_N$ converges to $\nu$ weakly in probability
\item[(ii)] for every bounded continuous function $f\colon \C \to \R$, the sequence of real random variables $\int_\C f d\nu_N$ converges in probability to $\int_\C f d\nu$.
\end{itemize}
\end{lemma}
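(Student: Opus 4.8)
The plan is to prove the two implications separately. The direction (i)$\Rightarrow$(ii) is essentially immediate from the definitions, while (ii)$\Rightarrow$(i) requires reducing weak convergence to a countable family of test functions.

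For (i)$\Rightarrow$(ii), fix a bounded continuous $f\colon\C\to\R$. Since the distance $d$ appearing in the definition of weak convergence in probability metrizes the weak topology, the functional $\mu\mapsto\int_\C f\,d\mu$ is continuous on the space $\mathcal{P}(\C)$ of Borel probability measures; in particular it is continuous at the deterministic point $\nu$, so for every $\delta>0$ there is $\varepsilon>0$ such that $d(\mu,\nu)\le\varepsilon$ implies $|\int_\C f\,d\mu-\int_\C f\,d\nu|\le\delta$. Therefore $\p(|\int_\C f\,d\nu_N-\int_\C f\,d\nu|>\delta)\le\p(d(\nu_N,\nu)>\varepsilon)$, and the right-hand side tends to $0$ by (i).

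For (ii)$\Rightarrow$(i), I would invoke the classical fact that, $\C$ being a Polish space, the weak topology on $\mathcal{P}(\C)$ is metrized by a distance of the form
\[
 d(\mu,\nu)=\sum_{k\ge1}2^{-k}\,\Bigl|\int_\C f_k\,d\mu-\int_\C f_k\,d\nu\Bigr|
\]
for a suitable countable, convergence-determining family $(f_k)_{k\ge1}$ of continuous functions with $\|f_k\|_\infty\le1$ (one may take, for instance, a countable subfamily of the unit ball of the bounded Lipschitz functions; by the Remark following the definition of weak convergence in probability we are free to work with this particular $d$). Given $\varepsilon>0$, choose $K$ with $2^{-K+1}<\varepsilon/2$. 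Since $\|f_k\|_\infty\le1$ gives $|\int_\C f_k\,d\mu-\int_\C f_k\,d\nu|\le2$, the event $\{d(\nu_N,\nu)>\varepsilon\}$ forces $|\int_\C f_k\,d\nu_N-\int_\C f_k\,d\nu|>\varepsilon/2$ for some $k\le K$, whence
\[
 \p\bigl(d(\nu_N,\nu)>\varepsilon\bigr)\le\sum_{k=1}^{K}\p\Bigl(\bigl|\int_\C f_k\,d\nu_N-\int_\C f_k\,d\nu\bigr|>\varepsilon/2\Bigr),
\]
and each of the finitely many summands tends to $0$ by hypothesis (ii) applied to the bounded continuous function $f_k$.

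The only non-routine input is the existence of a countable family $(f_k)$ for which the associated $d$ genuinely metrizes --- and not merely sequentially characterizes --- weak convergence on $\mathcal{P}(\C)$; this is a standard fact about probability measures on Polish spaces, which I would state without proof. A minor point to record along the way is measurability: for bounded continuous $f$ the map $\mu\mapsto\int_\C f\,d\mu$ is Borel on $\mathcal{P}(\C)$, so that $d(\nu_N,\nu)$ and $\int_\C f\,d\nu_N$ are legitimate random variables --- this is implicit in the notion of a random Borel probability measure. I expect the second implication, with its elementary but slightly fiddly truncation of the series, to be the most delicate part of the write-up.
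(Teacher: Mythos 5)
Your proof is correct, but it follows a different route from the one the paper actually takes. The paper proves both implications by the subsequence criterion for convergence in probability: for (i)$\Rightarrow$(ii) it extracts from any subsequence a further subsequence along which $d(\nu_{N_m},\nu)\to 0$ almost surely, and for (ii)$\Rightarrow$(i) it runs a Cantor diagonal argument over a countable family $\mathcal{A}$ of compactly supported continuous functions to get a further subsequence along which $\int f\,d\nu_{N_m}\to\int f\,d\nu$ simultaneously for all $f\in\mathcal{A}$ almost surely, and then upgrades to weak convergence on that event. Your argument is instead the ``direct'' one: for (i)$\Rightarrow$(ii) you use continuity of $\mu\mapsto\int f\,d\mu$ at $\nu$ in any metric inducing the weak topology, and for (ii)$\Rightarrow$(i) you work with the specific series metric $d(\mu,\nu)=\sum_k 2^{-k}\bigl|\int f_k\,d\mu-\int f_k\,d\nu\bigr|$, truncate the series, and apply a union bound over finitely many test functions. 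This is precisely the alternative the paper mentions (but does not carry out) in the remark following its proof. Your version is more quantitative and avoids any almost-sure subsequence extraction, at the cost of invoking, as a black box, the existence of a countable bounded family for which this series genuinely metrizes the weak topology on $\mathcal{P}(\C)$ (standard for Polish spaces, and your appeal to the paper's remark that the choice of $d$ is irrelevant makes the use of this particular $d$ legitimate); the paper's diagonal argument needs only that compactly supported continuous functions determine weak convergence against a probability limit, and uniformly treats both implications, which is why the author deems it slightly shorter. The truncation estimate you record (tail bounded by $2^{-K+1}<\varepsilon/2$, so $d(\nu_N,\nu)>\varepsilon$ forces some $k\le K$ with $\bigl|\int f_k\,d\nu_N-\int f_k\,d\nu\bigr|>\varepsilon/2$) is correct as stated.
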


\begin{proof}
  Assume (i), and note that from any subsequence of $\nu_N$ we can select a subsequence $\nu_{N_m}$ such that $d(\nu_{N_m},\nu) \to 0$ almost surely. Thus the corresponding subsequence $\int_\C f d\nu_{N_m}$ converges almost surely to $\int_\C f d\nu$. This however implies that the whole sequence $\int_\C f d\nu_N$ converges to $\int fd\nu$ in probability.

  To prove that (ii) implies (i) consider a countable set $\mathcal{A}$ of compactly supported continuous functions on $\C$ such that any compactly supported continuous function can be uniformly approximated by elements of $\mathcal{A}$. If (ii) is satisfied then for any subsequence of the sequence $\nu_N$, by using Cantor's diagonal argument we can select a subsequence $\nu_{N_m}$ such that with probability one for all  $f \in \mathcal{A}$ simultaneously $\int_\C f d\nu_{N_m} \to \int_{\C} f d\nu$. Let $\Omega'$ be the event of full measure on which this convergence holds. By the definition of $\mathcal{A}$ on $\Omega'$ we have $\int_\C f d\nu_{N_m} \to \int_\C f d\nu$ for all compactly supported continuous functions $f$, which implies that on $\mathcal{A}$ the subsequence $\nu_{N_m}$ converges weakly to $\nu$. Thus every subsequence of the sequence $\nu_N$ contains another subsequence converging to $\nu$ almost surely, which implies that $\nu_N$ converges to $\nu$ in probability.
\end{proof}

\begin{remark} It is not difficult to provide a more direct proof of the above lemma, e.g. by analyzing the base of topology for the weak convergence of probability measures or by introducing a distance, which metrizes the weak convergence and depends only on a countable family of functions (see e.g. the proof of Theorem 2.4.4. in \cite{MR2760897} for a similar derivation). We chose the argument based on the diagonal method since it seems to be slightly shorter to sketch and it allows for a more uniform treatment of the above lemma and the following one.
\end{remark}

\begin{lemma}\label{le:convergence-moments}
Let $\nu_N$ be a sequence of random probability measures on $\R$, such that with probability one $\nu_N$ has finite moments of all orders and let $\nu$ be a deterministic probability measure on $\R$, determined by its moments. If for all positive integers $k$,
\begin{displaymath}
  \int_\R x^k d\nu_N \to \int_\R x^k d\nu
\end{displaymath}
in probability, then
$\nu_N$ converges to $\nu$ weakly in probability.
\end{lemma}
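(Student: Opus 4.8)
The plan is to mimic the subsequence argument used in the proof of Lemma \ref{le:convergence-functions}, replacing bounded continuous test functions by moments and invoking the classical method of moments (the Fr\'echet--Shohat theorem). Recall that a sequence of random probability measures converges weakly in probability to a deterministic limit if and only if every subsequence admits a further subsequence along which weak convergence of measures holds almost surely. So I would fix an arbitrary subsequence of $(\nu_N)$ and work along it.

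For each fixed positive integer $k$, the hypothesis gives $\int_\R x^k\,d\nu_N \to \int_\R x^k\,d\nu$ in probability, hence along the chosen subsequence one can extract a further subsequence on which this convergence is almost sure. Performing this extraction successively for $k=1,2,3,\dots$ and passing to a diagonal subsequence $(\nu_{N_m})$, I obtain an event $\Omega'$ with $\p(\Omega')=1$ on which $\int_\R x^k\,d\nu_{N_m} \to \int_\R x^k\,d\nu$ holds for \emph{every} $k$ simultaneously.

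Next, for each $\omega \in \Omega'$ I would apply the deterministic method of moments to the (now deterministic) probability measures $\nu_{N_m}(\omega)$: all their moments converge to the moments of $\nu$, and $\nu$ is by assumption determined by its moments, so the Fr\'echet--Shohat theorem gives $\nu_{N_m}(\omega) \to \nu$ weakly. (Convergence of the second moments along the sequence makes them bounded, which yields tightness, so that every weak subsequential limit of $\nu_{N_m}(\omega)$ has the prescribed moments and therefore equals $\nu$.) Since this holds for all $\omega\in\Omega'$, we get almost sure weak convergence of $\nu_{N_m}$ to $\nu$.

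As the initial subsequence was arbitrary, every subsequence of $(\nu_N)$ has a further subsequence converging to $\nu$ weakly almost surely, which is precisely weak convergence in probability. The only mildly delicate points are the bookkeeping in the diagonal extraction and the passage, inside $\Omega'$, from convergence of all moments to weak convergence via uniqueness of the moment problem for $\nu$; both are classical, so I do not expect a genuine obstacle here.
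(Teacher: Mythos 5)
Your proposal is correct and follows essentially the same route as the paper: reduce weak convergence in probability to the subsequence criterion, use a Cantor diagonal extraction to get a further subsequence along which all moments converge almost surely simultaneously, and then conclude pathwise by the method of moments since $\nu$ is determined by its moments. The tightness remark via bounded second moments is a standard detail the paper leaves implicit, but otherwise the arguments coincide.
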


\begin{proof}
  The argument is similar to the proof of the implication from (ii) to (i) in the previous lemma.
  It is enough to show that from every subsequence of the sequence $\nu_N$ one can select another subsequence converging to $\nu$ almost surely. This can however be easily obtained by Cantor's diagonal argument, since for every $k$ from each subsequence of $\nu_N$ one can select a subsequence $\nu_{N_m}$ such that $\int_\R x^k d\nu_{N_m} \to \int_\R x^k d\nu$ almost surely as $m \to \infty$. The final subsequence obtained by the diagonal method has the property that with probability one all its moments converge to the corresponding moments of $\nu$, and since $\nu$ is uniquely determined by its moments this implies that the subsequence indeed converges to $\nu$ almost surely.
\end{proof}

\bibliographystyle{amsplain}	
\providecommand{\bysame}{\leavevmode\hbox to3em{\hrulefill}\thinspace}
\providecommand{\MR}{\relax\ifhmode\unskip\space\fi MR }
\providecommand{\MRhref}[2]{%
  \href{http://www.ams.org/mathscinet-getitem?mr=#1}{#2}
}
\providecommand{\href}[2]{#2}

\end{document}